\documentclass[12pt,reqno]{amsart}

\usepackage{amsmath, amsfonts, amsthm, amssymb, color,  graphicx, mathrsfs, cite}
\usepackage{stmaryrd}

\textwidth=15.0cm \textheight=21.0cm \hoffset=-1.2cm \voffset=-0.5cm

\makeatletter
\@addtoreset{figure}{section}
\makeatother


\theoremstyle{plain}
\newtheorem{Thm}{Theorem}[section]
\newtheorem{Lemma}{Lemma}[section]
\newtheorem{Proposition}{Proposition}[section]

\theoremstyle{definition}

\theoremstyle{Remark}
\newtheorem{Remark}{Remark}[section]

\numberwithin{equation}{section}
\allowdisplaybreaks

\usepackage{ifpdf}
\ifpdf \usepackage[colorlinks=true, citecolor=blue, linkcolor=blue, urlcolor=blue]{hyperref} \fi

\newcommand{\pt}{\partial_t}

\newcommand{\pxi}{\partial_i}

\newcommand{\pxk}{\partial_k}

\newcommand{\Do}{\mathbf{D}_0}
\newcommand{\Dn}{\mathbf{D}_{\neq}}

\newcommand{\mr}{\mathring}
\newcommand{\ac}{\acute}



\begin{document}
	
\title[composite wave]
{Nonlinear stability of the composite wave of planar rarefaction waves and planar contact waves for viscous conservation laws with non-convex flux under multi-dimensional periodic perturbations.}

\author{Meichen Hou}
\address{Center for nonlinear studies, School of Mathematics, Northwest University, Xi'an 710069, P.R.China.}
\email{meichenhou@nwu.edu.cn}

\author{Lingda Xu}
\address{Department of Mathematics, Yau Mathematical Sciences Center, Tsinghua University, Beijing 100084, P.R.China.\\
Yanqi Lake Beijing Institute of Mathematical Sciences And Applications, Beijing 101408, P.R.China}
\email{xulingda@tsinghua.edu.cn}

\date{} \maketitle

\begin{abstract}
	
	In this paper, we study the nonlinear stability of the composite wave consisting of planar rarefaction waves and planar contact waves for viscous conservation laws with degenerate flux under multi-dimensional periodic perturbations. To the level of our knowledge,  it is the first stability result of the composite wave for conservation laws in several dimensions. Moreover, the perturbations studied in the present paper are periodic, which keep constantly oscillating at infinity. Suitable ansatz is constructed to overcome the difficulty caused by this kind of perturbation and delicate estimates are done on zero mode and non-zero mode of perturbations. We obtain satisfactory decay rates for zero mode and exponential decay rates for non-zero mode.
	
	\bigbreak
	\noindent {\bf Keywords}: Composite wave; periodic perturbations; multi-dimensional viscous conservation law; rarefaction wave; contact wave

\noindent{\bf AMS subject classifications:} { \ 35B40; 35B45;  35L65; }
	
\end{abstract}

\section{Introduction }
\setcounter{equation}{0}

In this paper, we study the Cauchy problem for the following equations

\begin{equation}\label{eq}
\left\{
\begin{aligned}
&{\partial_{t} u+\sum\limits_{i=1}^{n}\partial_{x_i}(f_i(u))=\sum\limits_{i, j=1}^{n} a_{i j} u_{x_{i} x_{j}}}, \ {(t>0, x \in \mathbb{R}^{n}),}\\
&{u(x, 0)=u_{0}(x),}\ x\in\mathbb{R}^n,
\end{aligned}
\right.
\end{equation}
where the unknown function $u\in \mathbb{R}^1$ is scalar, the viscosity matrix $A\equiv(a_{ij})\in\mathbb{R}^{n\times n}$ is a constant positive definite matrix, $f_i(u)(i=1,2,..,n)$ are the smooth flux functions.

In (\ref{eq}), we further assume

	\begin{equation}\label{fi}
	\begin{aligned}
	 f_1\in C^{3}(\mathbb{R}),\ \ \ \ f_1(0)=f_1'(0)=0.
	\end{aligned}
	\end{equation}

We will examine the large-time behavior of the global solution for $(\ref{eq})$, which has a lot of similarities with the Riemann problem for the corresponding equations without viscosity, i.e.,
\begin{align}\label{eq1}
\begin{cases}
\partial_{t} u+\sum\limits_{i=1}^{n}\partial_{x_i}(f_i(u))=0\ {(t>0, x \in \mathbb{R}^{n})}, \\
{u(x, 0)=u^{(1)}_{0}(x)}, &
\end{cases}
\end{align}
where

\begin{align*}
u^{(1)}_{0}(x)=\begin{cases}
u_-,&x_1<0,\\
u_+,&x_1>0.
\end{cases}\quad (u_-<u_+)
\end{align*}

There has been a wealth of research into the stability of fundamental wave patterns for viscous conservation laws. For the one-dimensional (1-d) case, for example, in 1960, \cite{IKO,IK} studied the asymptotic stability of solutions when $n=1$ in (\ref{eq}) and the flux $f_1$ is strictly convex, of which only one single wave patterns generated. The convergence rate was obtained in \cite{KM1985,MN1994} if initial data belongs to a weighted Sobolev space, and the restrictions of initial data have been relaxed in \cite{HX2022}.  An interesting  $L^1$ stability theorem was established in \cite{FS1998}. Considering the general case that the flux $f_1$ is not uniformly genuinely nonlinear, there are several wave patterns in the Riemann solution. And we refer to Matsumura-Yoshida\cite{MY} and Yoshida\cite{Y}\cite{Y2}, who has studied the asymptotic behavior of superpositions of rarefaction waves and contact waves.

For the multi-dimensional (m-d) case, Z.P. Xin\cite{X} showed that the planar rarefaction wave for viscous conservation laws in several dimensions is stable, and there are several interesting extensions to this result, see \cite{I,KNN}. Shi-Wang\cite{SW} proved the stability theorem of the viscous shock wave, Kang-Vasseur-Wang proved the $L^2$-contraction of large planar shock waves for m-d scalar viscous conservation laws, we refer to \cite{V1,V2}. For the stability of planar shock wave, \cite{Z1}
revealed that the nonlinear stability of viscous shocks can be implied by the spectral
stability, where the latter one is somehow equivalent to the linearized stability with
respect to zero-mass perturbations, see \cite{Z2}.
Another interesting and important problem is considering the asymptotic stability of Riemann solutions under periodic perturbations for conservation laws. The research of this problem is started by Lax \cite{Lax} and Glimm-Lax \cite{GL} and were extended by \cite{XYY1,XYY2,YY,HY}.   

Our paper concerns the stability of waves of different types compounded together under m-d periodic perturbations. We use the new weight function $\eta$ for composite wave and thus succeed in constructing an ansatz which can overcome the difficulties posed by the constant oscillation of the initial perturbation at infinity. Motivated by \cite{Yuan1,Yuan2}, we decomposite the perturbation into zero and non-zero mode, and satisfactory decay rates are obtained for both components. Specifically, for the zero mode, we obtain the same decay rate as the 1-d case, see Theorem 1.4 in \cite{Y}, but our initial perturbations are not integrable, which is the key condition in \cite{Y}. Without this condition, the decay rate of $L^{\infty}$-norm is only $(1+t)^{-1/4+\epsilon}$. And we proved that the non-zero mode decays exponentially with respect to time $t$.

This paper is organized into the following structure. In section 2, we mainly introduce some properties of the planar waves and give the construction of the ansatz. Then the stability results are stated in Theorem \ref{mt}. In section 3, we rearranged our problem to a new perturbation $\phi,$  then we divided $\phi$ into the zero modes $\mr{\phi}$ and non-zero mode $\ac{\phi}$. The priori estimates for those two modes are listed in proposition \ref{pet1}-\ref{pet2}. Finally, in section 4, we mainly prove our results.

\textbf{Notations:}
The whole domain $\mathbb{R}\times\mathbb{T}^{n-1}$ is  abbreviated as $\Omega.$
$\|f(\cdot,t)\|_{L^p}$ and $\|f(\cdot,t)\|_{H^k}$ denote the norms of usual Lebesgue space $L^p$ and Sobolev space $H^k$ on the whole domain $\Omega$. $\|f(\cdot,t)\|_{L^p_{x_i}}$ and $\|f(\cdot,t)\|_{k,x_{i}}$ denote the norms of corresponding space on the $x_i$-direction. $C,C_i$ denotes the  generic positive constant which is independent of time $t$  unless otherwise stated.
Sometimes the space variable $x$ is denoted as  $x=(x_1,x')$, where  $x'=(x_2,x_3,...,x_n)$. Moreover,$dx'=(dx_2,dx_3,...dx_n).$
\section{Ansatz and main results}

In this section, we introduce some properties about the multi-wave patterns $\bar{U}$. Then we construct the ansatz and state the stability results in theorem \ref{mt}.
\subsection{The composite wave patterns}
\ \\

A planar wave (in $x_1$-direction, without loss of generality) is a solution of the following Cauchy problem:
\begin{align}\label{eqr}
\begin{cases}
\partial_{t} u+\partial_{x_1}(f_1(u))=0 & {(t>0, x_1 \in \mathbb{R})},\\
{u(x_1, 0)=u^{(2)}_{0}(x_1)},
\end{cases}
\end{align}
where
\begin{align}\label{in}
u^{(2)}_{0}(x_1)=\begin{cases}
u_-,&x_1<0,\\
u_+,&x_1>0.
\end{cases}
\end{align}
With the results in Liu\cite{L}, Matsumura-Nishihara\cite{MN1}, we study the solution of \eqref{eqr}
with smooth initial data, which converges to planar rarefaction wave in $L^\infty$-norm as $t\rightarrow\infty$. Specifically,
\begin{align}\label{eqrs}
\begin{cases}
\partial_{t} U+\partial_{x_1}(f_1(U))=0 \quad {(t>0, x_1 \in \mathbb{R})}, \\
{U(x_1, 0)=U^{r}_{0}(x_1)}=\frac{u_{+}+u_{-}}{2}+\frac{u_{+}-u_{-}}{2} \tanh x_1.
\end{cases}
\end{align}
As a result of our study, we explore a more general case, namely, there exists an interval $\equiv(a,b)\subset\mathbb{R}$, which exists in such a way that
\begin{align}\label{f1}
\left\{\begin{array}{ll}{f_1^{\prime \prime}(u)>0} & {(u \in(-\infty, a] \cup[b,+\infty))}, \\
{f_1^{\prime \prime}(u)=0} & {(u \in(a, b))}.\end{array}\right.
\end{align}
There are some theories about studying (\ref{eqr}) under 1-d condition (\ref{in}) and \eqref{f1}, see \cite{JS} for example. It is known that the Riemann solution consists of rarefaction waves and contact discontinuities in this case. The explicit formulas of them depend on $a,\ b,\ u_-$, and $u_+$.  Therefore, we need to discuss different situations separately to avoid confusion.

We denote the planar rarefaction wave connecting end states $u_-$ and $u_+$ by $U^r(\frac{x}{t};u_-,u_+)$, on which $u_{\pm}$ are two constants $(u_{\pm} \in(-\infty, a] \cup[b,+\infty)))$. The explicit formula of $U^{r}(\frac{x}{t};u_-,u_+)$ is
\begin{align}\label{r}
u=U^{r}\left(\frac{x_1}{t} ; u_{-}, u_{+}\right)\equiv\begin{cases}
{u_{-}}, & {\left(x_1 \leq f_1'\left(u_{-}\right) t\right),} \\
{(f_1')^{-1}\left(\frac{x_1}{t}\right)}, & {\left(f_1'\left(u_{-}\right) t \leq x_1 \leq f_1'\left(u_{+}\right) t\right),} \\
{u_{+}}, & {\left(x_1 \geq f_1'\left(u_{+}\right) t\right)}.
\end{cases}
\end{align}

And the viscous contact wave (or so-called viscous version of contact discontinuity) connecting $v_-$ and $v_+$ $(v_{\pm}\in[a,b])$ is denoted by $U^{c}\left(\frac{x_1-\lambda t}{\sqrt{t}} ; v_{-}, v_{+}\right),$ that is
\begin{align}\label{c}
u=U^c\left(\frac{x_1-\lambda t}{\sqrt{t}} ; v_{-}, v_{+}\right)\equiv v_{-}+\frac{v_{+}-v_{-}}{\sqrt{\pi}} \int_{-\infty}^{\frac{x_1-\lambda t}{\sqrt{4 a_{11} t}}} \mathrm{e}^{-\xi^{2}} \mathrm{d} \xi,
\end{align}
where $\lambda\equiv\frac{f_1(a)-f_1(b)}{a-b}$. $U^c$ is the solution of the following heat equation
$$\partial_{t} U^c+ \lambda \partial_{x_1} U^c=a_{11} \partial_{x_1}^{2} U^c.$$

Now we can list the asymptotic attractors of different cases, which is denoted by $\bar{U}$,

i) $(a,b)\cap(u_-,u_+)=\emptyset$, the asymptotic attractor is

$$\bar{U}=U^{r}\left(\frac{x_1}{t} ; u_{-}, u_{+}\right);$$
There are only a rarefaction wave. We omit this case since it is the same as case of which $f_1(u)$ is genuinely nonlinear;

ii) $a<u_-<b<u_+$, the asymptotic attractor is
$$\bar{U}=U^c\left(\frac{x_1-\lambda t}{\sqrt{t}} ; u_{-}, b\right)+U^{r}\left(\frac{x_1}{t} ; b, u_{+}\right)-b;$$

iii) $u_-<a<u_+<b$, the asymptotic attractor is
$$\bar{U}=U^{r}\left(\frac{x_1}{t} ; u_-, a\right)+U^c\left(\frac{x_1-\lambda t}{\sqrt{t}} ; a, u_+\right)-a;$$

iv) $u_-<a<b<u_+$, the corresponding asymptotic attractor is
$$\bar{U}=U^{r}\left(\frac{x_1}{t} ; u_-, a\right)-a+U^c\left(\frac{x_1-\lambda t}{\sqrt{t}} ; a, b\right)+U^{r}\left(\frac{x_1}{t} ; b, u_+\right)-b.$$

\subsection{Some properties for the planar waves}
There is the explicit formula given in the two references (\ref{r}) and (\ref{c}), we will study more properties surrounding these two profiles in the coming sections. Among these profiles, the first is the smooth approximation of the rarefaction wave $U^r$, which is denoted as $u^r$. As a starting point, let us consider the following initial value problem. It is possible to denote the solution with the far field states $(w_-,w_+)$ as $w(x_1,t;w_-,w_+)$.
\begin{align}\label{sr}
\begin{cases}
{\partial_{t} w+\partial_{x_1}\left(\frac{1}{2} w^{2}\right)=0 \quad(t>0, x_1 \in \mathbb{R})}, \\
w(x_1, 0)=\frac{w_{+}+w_{-}}{2}+\frac{w_{+}-w_{-}}{2} \tanh x_1 \quad(x_1 \in \mathbb{R}).
\end{cases}
\end{align}
Our profile $u^r(x_1,t;u_-,u_+)$ is defined as

\begin{equation}\label{ur}
\begin{aligned}
u^{r}\left(x_1,t ; u_{-}, u_{+}\right):=(f_1')^{-1}\left(w\left(x_1,t; (f_1')^{-1}(u_-), (f_1')^{-1}(u_+)\right)\right).
\end{aligned}
\end{equation}
With direct calculation, we find that, $u^r$ satisfies
\begin{align}
\begin{cases}
u^r_t+(f_1(u^r))_{x_1}=0,\\
u^r(x_1,0)=(f_1')^{-1}(U^r_0(x_1)),
\end{cases}
\end{align}
where $U^r_0(x_1)$ is defined in \eqref{eqrs}, and $\lim\limits_{x_1\rightarrow\pm\infty}u^r(x_1,t)=u_{\pm}$. Moreover,

\begin{equation*}
\lim_{t\rightarrow\infty}\sup_{x_1\in\mathbb{R}}|U^r(\frac{x_1}{t};u_-,u_+)-u^r(x_1,t;u_-,u_+)|=0.
\end{equation*}

Many works study the smooth approximation of rarefaction waves, we refer to\cite{L}\cite{MN1}\cite{MY} for more details. Here we list the properties as the following lemma.
\begin{Lemma}[Decay properties of $u^r$]\label{rl}
	Under the assumptions \eqref{fi},\eqref{f1} and $u_-<u_+$, we have the following estimates:
	
	(1)  $u_-<u^r(x_1, t)<u_+$, and $\partial_{x_{1}} u^r>0$.
	
	(2) For $1\leq p\leq\infty$, there exist a positive constant $C(p,u_-,u_+)$ depending on $p,u_-,u_+$ such that
	\begin{align}\label{rp1}
	\begin{cases}
	{\left\|\partial_{x_1} u^{r}(t)\right\|_{L^{p}(\mathbb{R})} \leq C(p,u_-,u_+)(1+t)^{-1+\frac{1}{p}}},{(t \geq 0)},\\
	{\left\|\partial_{x_1}^{2} u^{r}(t)\right\|_{L^{p}(\mathbb{R})} \leq C(p,u_-,u_+)(1+t)^{-1}}.
	\end{cases}
	\end{align}
	Especially, when $p=\infty$, we have
	\begin{equation}
	\begin{aligned}
	\sup_{x_1\in\mathbb{R}}|\partial_{x_{1}}u^r(\cdot,t)|\leq C(u_-,u_+)(1+t)^{-1}.
	\end{aligned}
	\end{equation}

	(3) For any $\delta\in(0,1)$, there exists a positive constant $C_\delta$ such that the following inequalities hold,
	\begin{align}
	&\left|u^{r}(x_1,t)-u_{+}\right| \leq C_{\delta}(1+t)^{-1+\delta} \mathrm{e}^{-\delta\left|x_1-\lambda_{+} t\right|} \quad\left(t \geq 0, x_1 \geq \lambda_{+} t\right),\\
	&\left|u^{r}(x_1,t)-u_{-}\right| \leq C_{\delta}(1+t)^{-1+\delta} \mathrm{e}^{-\delta\left|x_1-\lambda_{-} t\right|} \quad\left(t \geq 0, x_1 \leq  \lambda_{-} t\right),\\
	&\left|u^{r}(x_1,t)-U^{r}\left(\frac{x_1}{t}\right)\right| \leq C_{\delta}(1+t)^{-1+\delta} \quad\left(t \geq 1, \lambda_{-} t \leq x_1 \leq \lambda_{+} t\right),
	\end{align}

	where $\lambda_{\pm}=(f_1')^{-1}(u_\pm)$.
\end{Lemma}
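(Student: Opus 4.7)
The plan is to reduce all three statements to the corresponding well-known facts for the inviscid Burgers smoothed rarefaction $w$ of \eqref{sr}, and then transfer them through the pointwise relation $u^r = (f_1')^{-1}(w)$. A crucial preliminary observation is that in each factor rarefaction of the composite patterns ii)--iv), the two end states $u_\pm$ lie in the same connected component of $(-\infty,a]\cup[b,+\infty)$, so by \eqref{f1} there exist constants $0 < c_0 \le C_0 < \infty$ with $c_0 \le f_1''(u) \le C_0$ and $|f_1'''(u)| \le C_0$ for $u \in [u_-,u_+]$; in particular $f_1'$ is a $C^2$-diffeomorphism on $[u_-,u_+]$ with uniformly controlled inverse.

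For (1), the method of characteristics applied to \eqref{sr} yields an implicit solution $w(x_1,t) = w_0(x_0(x_1,t))$, where $x_0$ is the unique root of $x_1 = x_0 + t w_0(x_0)$; the strict monotonicity of the tanh profile $w_0$ immediately gives $w_- < w < w_+$ and $\partial_{x_1} w = w_0'(x_0)/(1 + t w_0'(x_0)) > 0$, so (1) follows by the strict monotonicity of $(f_1')^{-1}$. For (2), I invoke the classical Burgers estimates $\|\partial_{x_1}^k w(t)\|_{L^p(\mathbb{R})} \le C(1+t)^{-k+1/p}$ for $k=1,2$ and $1\le p\le\infty$ (see \cite{MN1,MY}), together with the chain-rule identities
\begin{equation*}
\partial_{x_1} u^r = \frac{\partial_{x_1} w}{f_1''(u^r)}, \qquad \partial_{x_1}^2 u^r = \frac{\partial_{x_1}^2 w}{f_1''(u^r)} - \frac{f_1'''(u^r)(\partial_{x_1} w)^2}{f_1''(u^r)^3},
\end{equation*}
and the uniform bounds on $f_1''(u^r)$, $f_1'''(u^r)$. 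The first-order bound is then immediate, while for the second-order bound I also need $\|(\partial_{x_1} w)^2\|_{L^p} = \|\partial_{x_1} w\|_{L^{2p}}^2 \le C(1+t)^{-2+1/p} \le C(1+t)^{-1}$, so both terms decay at the required rate. For (3), the analogous pointwise exponential and polynomial bounds against the inviscid Burgers fan are classical for $w$; since $U^r$ and its Burgers counterpart $W^r$ are linked by the same change of variables (readable directly from \eqref{r}), the mean-value inequalities $|u^r - u_\pm| \le C\, |w - w_\pm|$ and $|u^r - U^r| \le C\, |w - W^r|$ transfer the bounds to $u^r$ with a possibly enlarged constant.

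The main technical obstacle is keeping the chain-rule transfer uniformly non-degenerate, i.e., ensuring that $u^r(x_1,t)$ stays bounded away from the degenerate interval $(a,b)$ where $f_1''$ vanishes. Part (1) secures exactly this: because each factor rarefaction is constructed between two end states in a single convex component of the flux, $u^r$ remains in that component for all $(x_1,t)$, and all the constants above depend only on $u_\pm$, $p$, and $\delta$.
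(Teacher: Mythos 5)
The paper does not actually prove Lemma \ref{rl}; it refers the reader to \cite{L,MN1,MY}, so there is no internal proof to compare against. Your plan of transferring the classical Burgers estimates through the change of variable $u^r=(f_1')^{-1}(w)$ is the natural reconstruction and is indeed the mechanism at work in those references for the non-degenerate part. That said, two points need attention: one cosmetic, one substantive.

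The cosmetic one: the $k=2$ Burgers bound you quote, $\|\partial_{x_1}^2 w\|_{L^p}\le C(1+t)^{-2+1/p}$, is sharper than what is actually true. Writing the characteristic solution, $\partial_{x_1}^2 w = w_0''(x_0)/(1+tw_0'(x_0))^3$, and checking that both $\|\partial_{x_1}^2 w\|_{L^1}$ and $\|\partial_{x_1}^2 w\|_{L^\infty}$ are of size $t^{-1}$ for large $t$, one obtains the uniform rate $\|\partial_{x_1}^2 w\|_{L^p}\le C(1+t)^{-1}$ for all $p$ — exactly what the lemma asserts, but no faster. Your conclusion survives because the other term $\|(\partial_{x_1} w)^2\|_{L^p}\le C(1+t)^{-2+1/p}\le C(1+t)^{-1}$ is dominated anyway, but the intermediate claim as stated is incorrect.

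The substantive one is precisely the step you describe as ``crucial'': the uniform two-sided bound $c_0\le f_1''(u)\le C_0$ on the closed interval $[u_-,u_+]$. In the reduced setting \eqref{f''} that the paper actually works in, the factor rarefaction connects $0$ to $u_+$, $f_1\equiv 0$ on $(-\infty,0)$, and $f_1\in C^3$; together these force $f_1''(0)=0$, so $f_1''$ genuinely vanishes at the degenerate endpoint and there is no positive lower bound on $[0,u_+]$. (The literal reading of \eqref{f1}, which would put $f_1''(b)>0$, is incompatible with $f_1\in C^3$ and $f_1''\equiv 0$ on $(a,b)$, so it cannot be relied on.) Consequently $\partial_{x_1}u^r=\partial_{x_1} w/f_1''(u^r)$ is formally singular as $u^r$ approaches the degenerate end state, the mean-value transfer $|u^r-u_-|\le C|w-w_-|$ fails to give a uniform constant, and the interpolation constants in (2)--(3) acquire a dependence on the order of degeneracy of $f_1''$ near $0$. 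This is exactly the feature that separates Matsumura--Yoshida's analysis from the genuinely nonlinear case: one must show that the numerator $\partial_{x_1}w$ (resp. $w-w_-$) vanishes fast enough near the edge of the fan to compensate for the vanishing of $f_1''(u^r)$. Your proposal sidesteps this cancellation by assuming the uniform non-degeneracy that the setting of the paper is explicitly designed to drop, so as written it does not establish the lemma in the regime the paper cares about.
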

In lemma \ref{rl}, we have introduced some decay properties of $u^r(x_1,t)$. Next, we will study the viscous contact wave  $u^c(x_1,t).$ Recall that $U^c(x_1,t;v_-,v_+)$ defined by (\ref{c}) satisfies the Cauchy problem
\begin{align}
\partial_{t} U^c+ \lambda \partial_{x} U^c=a_{11} \partial_{x}^{2} U^c.
\end{align}
Notice that it is a parabolic equation, so we know that $U^c(x_1,t)\in C^\infty(\mathbb{R}\times(0,\infty))$. But when $t\rightarrow 0$, $U^c$ is no longer continuous. To avoid the singularity, we consider

\begin{equation}\label{uc}
\begin{aligned}
u^c(x_1,t;v_-,v_+)\equiv U^{c}(\frac{x_1}{\sqrt{1+t}},v_-,v_+).
\end{aligned}
\end{equation}

Here, we only consider the essential case $\lambda=0$, since we can do a transformation to obtain the other cases. The explicit formula of $u^c$ is given by (\ref{c}), so we can obtain the following properties by direct calculations.
\begin{Lemma}\label{cl}
	Under the assumptions \eqref{fi}, \eqref{f1}, and $v_-<v_+$, the following properties hold
	
	(1) $\lim\limits_{t\rightarrow\infty}\sup_{x_1 \in \mathbb{R}}|U^c(x_1,t)-u^c(x_1,t)|=0.$
	
	(2) $u_-<u^c(x_1,t)<u_+$, and $\partial_{x_1} u^c(x_1,t)>0$,
	
	(3) For $1\leq p\leq \infty$, there exists a positive constant $C(p,u_-,u_+)$, such that
	\begin{align}
	\|\partial_{x_1} u^c(\cdot,t)\|_{L^p(\mathbb{R})}\leq C(p,u_-,u_+) t^{-\frac{1}{2}(1-\frac{1}{p})},\ \ \ \ \ \ (t>0).
	\end{align}
\end{Lemma}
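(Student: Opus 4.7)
The proof is essentially a direct computation from the explicit formula, so my plan is to unpack the definition (\ref{uc}) and estimate each item in turn. Specializing to $\lambda=0$ as the statement permits, the explicit formula reads
\begin{equation*}
u^{c}(x_1,t;v_-,v_+)=v_-+\frac{v_+-v_-}{\sqrt{\pi}}\int_{-\infty}^{x_1/\sqrt{4a_{11}(1+t)}}e^{-\xi^{2}}\,d\xi,
\end{equation*}
and $U^{c}$ is the analogous expression with $1+t$ replaced by $t$. With these two formulas in hand, every conclusion reduces to a Gaussian estimate.

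For item (2), I would simply differentiate: $\partial_{x_1}u^{c}=(v_+-v_-)/\sqrt{4\pi a_{11}(1+t)}\cdot e^{-x_1^{2}/(4a_{11}(1+t))}$, which is strictly positive since $v_-<v_+$. Then the monotonicity of $u^c$ in $x_1$ together with $\lim_{x_1\to\pm\infty}u^{c}(x_1,t)=v_{\pm}$ yields $v_-<u^{c}<v_+$. (I am assuming the $u_\pm$ on the right-hand side of item (2) in the statement is a typo for $v_\pm$.) For item (3), the same formula gives
\begin{equation*}
\|\partial_{x_1}u^{c}\|_{L^{p}(\mathbb{R})}^{p}=\Bigl(\frac{v_+-v_-}{\sqrt{4\pi a_{11}(1+t)}}\Bigr)^{p}\int_{\mathbb{R}}e^{-p x_1^{2}/(4a_{11}(1+t))}\,dx_{1},
\end{equation*}
and the substitution $y=x_1/\sqrt{4a_{11}(1+t)}$ reduces this to $C(p,v_\pm,a_{11})(1+t)^{-(p-1)/2}$, which is controlled by $t^{-(1-1/p)/2}$ for all $t>0$ (trivially on $0<t\le 1$ and by comparison of $1+t$ with $t$ on $t\ge 1$).

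Item (1) is the only part that requires a small argument. Writing
\begin{equation*}
U^{c}(x_1,t)-u^{c}(x_1,t)=\frac{v_+-v_-}{\sqrt{\pi}}\int_{I(x_1,t)}e^{-\xi^{2}}\,d\xi,\quad I(x_1,t)=\Bigl[\tfrac{x_1}{\sqrt{4a_{11}(1+t)}},\tfrac{x_1}{\sqrt{4a_{11}t}}\Bigr],
\end{equation*}
the two endpoints of $I$ have the same sign, and the length of $I$ is $|x_1|\cdot(\sqrt{1+t}-\sqrt{t})/\sqrt{4a_{11}t(1+t)}=O(|x_1|t^{-3/2})$. Bounding the integral by (length)$\times$(sup of integrand on $I$) gives, after a short computation, a bound of the form $C\,|x_1|t^{-3/2}\exp(-c\,x_1^{2}/(1+t))$. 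The main thing to verify is that the supremum of this quantity over $x_1\in\mathbb{R}$ tends to $0$; this is the only step that needs a touch of care, and it follows by observing that the maximum is attained near $|x_1|\sim\sqrt{t}$, at which point the bound is $O(t^{-1})\to 0$. This is the sole non-routine obstacle; all other steps are algebraic consequences of the Gaussian formula. Finally, I would record the monotonicity and far-field limits once more at the end to make the three conclusions read cleanly from the formulas.
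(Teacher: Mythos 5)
Your proposal is correct, and it matches the paper's own approach: the paper simply states that the lemma follows ``by direct calculations'' from the explicit Gaussian formula \eqref{c}--\eqref{uc}, which is exactly what you carry out. Your treatment of item (1) via the length-times-sup bound and the observation that the maximum over $x_1$ sits near $|x_1|\sim\sqrt{t}$ is the right refinement, and you are also correct that the $u_\pm$ in item (2) should read $v_\pm$.
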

Because the asymptotic attractor $\bar{U}$ contains multiple waves, we should study the interactions between these two wave patterns. Different from the cases of systems, in the scalar case, different wave patterns will not separate from each other, so the estimate of interaction is more difficult.

Observing lemma \ref{rl} and \ref{cl}, we know that the curve connecting two wave patterns is the key point. For any $t>0$, we set $X(t)\in\mathbb{R}$ as the curve connecting two wave patterns, that is, $X(t)$ satisfies
\begin{align}\label{X}
\hat{u}(X(t),t)=u^c(X(t),t)+u^r(X(t),t)=0,
\end{align}
where $u^c(X(t),t)=u^c(X(t),t;u_-,0)$, $u^r(X(t),t)=u^r(X(t),t;0,u_+)$, $u_-<0<u_+$. Then we have the following
\begin{Lemma}[\cite{MY}] \label{xl}
	Assume $u_-<0<u_+$, the following properties hold
	
	(1) There exists a positive $T_0$, such that, for any $t>T_0$,
	\begin{align}
	\sqrt{4a_{11}(1+t)}\leq X(t)\leq \lambda_{+} (1+t).
	\end{align}
	
	(2) Define $X(t)$ as in (\ref{X}), for $t>T_0$, we have
	\begin{align}
	\left|(f_1')^{-1}\left(\frac{X(t)}{1+t}\right)-\frac{\left|u_{-}\right|}{\sqrt{\pi}} \int_{\frac{X(t)}{\sqrt{4 a_{11}(1+t)}}}^{\infty} \mathrm{e}^{-\xi^{2}} \mathrm{d} \xi\right| \leq C(1+t)^{-\frac{3}{4}}.
	\end{align}

	(3) For any positive constant $\delta\in(0,1)$, there exists a constant $C_\delta>0$ such that
	
	\begin{align}
	\left(C_\delta+ln(1+t)^{\frac{1}{2(1+\delta)}}\right)^{\frac{1}{2}}\sqrt{1+t}\leq X(t)\leq \left(C+ln(1+t)^{\frac{1}{2}}\right)^{\frac{1}{2}}\sqrt{1+t},
	\end{align}
	where $ln\ t= \log_e (t)$.
\end{Lemma}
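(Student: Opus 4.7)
The plan is to exploit monotonicity: since $u^c$ and $u^r$ are both strictly increasing in $x_1$ by Lemmas \ref{rl}(1) and \ref{cl}(2), so is $\hat{u}(x_1,t):=u^c(x_1,t;u_-,0)+u^r(x_1,t;0,u_+)$, and therefore $X(t)$ is its unique zero. All three claims then reduce to checking the sign of $\hat{u}$ at carefully chosen test points, using the explicit Gaussian formula \eqref{c} for $u^c$ together with the decay estimates of Lemma \ref{rl} for $u^r$. For (1), I test at $x_1=\lambda_+(1+t)$ and $x_1=\sqrt{4a_{11}(1+t)}$. At the upper point, $x_1-\lambda_+t=\lambda_+$ is fixed, so Lemma \ref{rl}(3) gives $u^r=u_++O((1+t)^{-1+\delta})$; meanwhile the dimensionless variable $z:=x_1/\sqrt{4a_{11}(1+t)}$ grows like $\sqrt{1+t}$, so $|u^c|$ is exponentially small and $\hat{u}>0$. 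At the lower point $z=1$, so $u^c$ equals the fixed negative constant $u_-\pi^{-1/2}\int_1^\infty e^{-\xi^2}d\xi$; whereas the degeneracy $f_1'(0)=f_1''(0)=0$ yields $(f_1')^{-1}(y)\sim\sqrt{y}$ near zero, making $u^r=O((1+t)^{-1/4})$, so $\hat{u}<0$ for $t$ large. The intermediate value theorem then brackets $X(t)$ in the stated interval.

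For (2), substituting \eqref{c} rewrites $\hat{u}(X(t),t)=0$ as
\[
u^r(X(t),t;0,u_+)=\frac{|u_-|}{\sqrt{\pi}}\int_{X(t)/\sqrt{4a_{11}(1+t)}}^{\infty}e^{-\xi^2}\,d\xi,
\]
so it suffices to replace the left side by $(f_1')^{-1}(X(t)/(1+t))$ modulo $O((1+t)^{-3/4})$. Lemma \ref{rl}(3), applicable because (1) places $X(t)/t\in[\lambda_-,\lambda_+]$, gives $u^r(X(t),t)=(f_1')^{-1}(X(t)/t)+O((1+t)^{-1+\delta})$. The mean value theorem on $(f_1')^{-1}$ then transfers the argument to $X(t)/(1+t)$: using $|X(t)/t-X(t)/(1+t)|\le\lambda_+/t$ together with the degenerate bound $|((f_1')^{-1})'(y)|\lesssim y^{-1/2}$, one picks up an additional error $O(t^{-1}y^{-1/2})=O((1+t)^{-3/4})$ after invoking $y=X(t)/(1+t)\gtrsim(1+t)^{-1/2}$ from (1). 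Combining the two contributions yields the claimed estimate.

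Part (3) is obtained by bootstrapping (2). Writing $z=X(t)/\sqrt{4a_{11}(1+t)}$, the tail expansion $\pi^{-1/2}\int_z^\infty e^{-\xi^2}d\xi=(2\sqrt{\pi}\,z)^{-1}e^{-z^2}(1+O(z^{-2}))$ together with $(f_1')^{-1}(X(t)/(1+t))\asymp\sqrt{X(t)/(1+t)}\asymp z^{1/2}(1+t)^{-1/4}$ reduces (2) to a transcendental relation whose logarithm gives $z^2\asymp\ln(1+t)+O(\ln\ln(1+t))$, i.e.\ $X(t)^2\asymp(1+t)\ln(1+t)$. The sharp inequalities of (3) are then obtained by testing $\hat{u}$ at the candidate upper bound $(C+\ln(1+t)^{1/2})^{1/2}\sqrt{1+t}$ (where $\hat{u}>0$) and at the candidate lower bound $(C_\delta+\ln(1+t)^{1/(2(1+\delta))})^{1/2}\sqrt{1+t}$ (where $\hat{u}<0$), and invoking monotonicity. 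The main technical obstacle lies in this final step: the sharp Gaussian decay on one side and the degenerate behaviour of $f_1'$ at $0$ on the other must be balanced through the implicit equation, and the $\delta$-loss appearing in the lower bound of (3) is inherited directly from the $\delta$-dependent error in Lemma \ref{rl}(3); carrying this bookkeeping through the logarithmic top-order balance without spoiling the asymptotic is the delicate point of the proof.
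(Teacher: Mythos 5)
The paper does not prove Lemma \ref{xl}: it is imported directly from Matsumura--Yoshida \cite{MY} (the citation is in the lemma header and no proof is given), so there is no in-paper proof to compare against. Your reconstruction---monotonicity of $\hat u(\cdot,t)$ plus the intermediate value theorem at the test points $\sqrt{4a_{11}(1+t)}$ and $\lambda_+(1+t)$, a mean-value transfer from $(f_1')^{-1}(X(t)/t)$ to $(f_1')^{-1}(X(t)/(1+t))$ for (2), and a log-asymptotic balance of the Gaussian tail against the degenerate inverse flux for (3)---is the natural strategy and has the same shape as the cited argument.

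There is nevertheless a gap worth flagging. Throughout you rely on $(f_1')^{-1}(y)\asymp\sqrt y$, equivalently $|((f_1')^{-1})'(y)|\lesssim y^{-1/2}$, near $y=0$, and this square-root degeneracy is not derivable from the hypotheses as the paper states them. Read literally, \eqref{fi} ($f_1\in C^3(\mathbb R)$) combined with $f_1\equiv0$ on $(-\infty,0)$ forces $f_1''(0)=f_1'''(0)=0$ by one-sided continuity, which contradicts the closed-interval reading of ``$f_1''>0$ on $[0,\infty)$'' in \eqref{f''} and makes $((f_1')^{-1})'(y)$ blow up strictly faster than $y^{-1/2}$ (e.g.\ $f_1'(u)=u^3$ on $u\geq0$ gives $((f_1')^{-1})'(y)\sim y^{-2/3}$); under that reading your mean-value error in (2) is only $O((1+t)^{-2/3})$, worse than $(1+t)^{-3/4}$, and the lemma would fail. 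The square-root degeneracy is precisely the structural input from which the exponent $-3/4$ in (2) and the $\tfrac12\ln(1+t)$ balance in (3) emerge; it has to be supplied as an explicit hypothesis rather than inferred. A secondary imprecision: the $\delta$ in the lower bound of (3) is not ``inherited directly'' from the $\delta$ in Lemma \ref{rl}(3). It is the slack required to absorb all of the sub-leading errors---the Gaussian-tail correction, the mean-value error, and the $(1+t)^{-1+\delta}$ remainder---into a sub-optimal coefficient of $\ln(1+t)$, and the two occurrences of $\delta$ are distinct parameters playing different roles.
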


\subsection{The initial data and the construction of ansatz}

In section 1, we introduce some cases of asymptotic attractors see i), ii), iii), iv). In this section, we will further reduce the case and reformulate the problem to avoid some unnecessary discussion in \cite{MY}.

Firstly, with the following transformations
\begin{align}
&x_1-\lambda t \mapsto x_1,\ \ \  u-b \mapsto u,\\
&f_1(u+b)-f_1^{\prime}(b) u-f_1(a) \mapsto f_1(u),\ \ \ a-b \mapsto a.
\end{align}
we reduce (\ref{f1}) to
\begin{align}
\left\{\begin{array}{ll}{f_1^{\prime \prime}(u)>0} & {(u \in(-\infty, a] \cup[0,+\infty))},\\
{f_1(u)=0} & {(u \in(a, 0))}.
\end{array}\right.
\end{align}

Secondly, two cases in ii), and iii) are similar, so we only consider case ii) $a<u_-<0<u_+$. Next, comparing the difference between ii) and iv), we know that the extra terms are nonlinear interactions between two rarefaction waves separated from each other. Since the interactions between rarefaction waves and contact waves are more difficult, we only consider the essential case ii).

	Furthermore, as pointed out in\cite{MY}, we can assume $a=-\infty$ since the proof is almost the same as the case that $a$ is a finite number. Thus, in this paper, we treat the following case
\begin{align}\label{f''}
\left\{\begin{array}{ll}{f_1^{\prime \prime}(u)>0} & {(u \in[0, \infty))},\\
{f_1(u)=0} & {(u \in(-\infty, 0))}.
\end{array}\right.
\end{align}

In this case, the asymptotic attractor is
\begin{align}
\bar{U}\equiv U^{c}(\frac{x_1}{\sqrt{t}};u_-,0)+U^{r}(\frac{x_1}{t};0,u_+).
\end{align}

Define the multi-planar wave $\hat{u}(x_1,t)$ as
\begin{align}\label{hatu}
\hat{u}(x_1,t)\equiv U^{c}(\frac{x_1}{\sqrt{1+t}};u_-,0)+U^r(\frac{x_1}{1+t};0,u_+).
\end{align}

For simplicity, we use the following notations
$$
u^c(x_1,t)\equiv U^{c}(\frac{x_1}{\sqrt{1+t}};u_-,0),\ \ \ \ \ u^r(x_1,t)\equiv u^r(\frac{x_1}{1+t};0,u_+).
$$

We want to consider the Cauchy problem \eqref{eq} with the following initial data
\begin{equation}\label{equ}
u(x,0)=\hat{u}(x_1,0)+V_0(x),\ (\hat{u}(x_1,0)=\hat{u}_0(x_1)\rightarrow u_{\pm},\ x_1\rightarrow\pm\infty).
\end{equation}
Here the initial data $u(x,0)$ can be regarded as a small periodic perturbation around $\hat{u},$ that is
\begin{equation}
V_0(x)\in H^{[\frac{n}{2}]+2}(\mathbb{T}^{n}), \ \|V_0\|_{H^{[\frac{n}{2}]+2}(\mathbb{T}^{n})}\leq \varepsilon,\ \int_{\mathbb{T}^{n}}V_0(x)dx=0.
\end{equation}
Set $\bar{u}_{\pm}(x,t)$ be the solution of system \eqref{eq} with the following periodic initial data
\begin{equation}
\bar {u}_{\pm}(x,0)=u_{\pm}+V_0(x),
\end{equation}
respectively. Then we have following lemma:
\begin{Lemma}\label{lem-periodic-solution}
	For the scalar conservation laws \eqref{eq} with the following periodic initial data
	\begin{equation}\label{ip}
	\begin{aligned}
	\bar{u}_{\pm}(x,0)=u_{\pm}+V_0(x),\  x\in\mathbb{R}^{n},
	\end{aligned}
	\end{equation}
	where
	\begin{equation}\label{cd-zeroav}
	\int_{\mathbb{T}^n}V_0(x)dx=0.
	\end{equation}
	Then there exists a constant $\varepsilon_0>0$ such that for $\varepsilon:=\|V_0\|_{H^{[\frac{n}{2}]+2}(\mathbb{T}^{n})}\leq \varepsilon_0,$ the Cauchy problems \eqref{eq} with \eqref{ip} admits a pair of unique global periodic solutions
	$\bar{u}_{\pm}(x,t)\in C([0,+\infty);H^{[\frac{n}{2}]+2}(\mathbb{T}^n))$ satisfying
	\begin{equation}
	\int_{\mathbb{T}^n}(\bar{u}_{\pm}-u_{\pm})(x,t)dx=0,\quad  t\geq 0,
	\end{equation}
	and
	\begin{equation}\label{pe-de}
	\|\bar{u}_{\pm}-u_{\pm}\|_{W^{1,\infty}(\mathbb{R}^n)}\leq C\|V_0\|_{H^{[\frac{n}{2}]+2}(\mathbb{T}^{n})} e^{-\bar{c}t}, \ t\geq 0,
	\end{equation}
	where $\bar{c}$ is a positive constant independent of $t$.
\end{Lemma}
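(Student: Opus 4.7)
Let me write $W := \bar u_\pm - u_\pm$ (treating the two cases identically) and derive the equation for $W$. Since $u_\pm$ is a constant, $\partial_{x_i}f_i(u_\pm)=0$, and a Taylor expansion gives
\begin{equation*}
W_t + \sum_{i=1}^n f_i'(u_\pm)\,\partial_{x_i}W + \sum_{i=1}^n \partial_{x_i} R_i(W) = \sum_{i,j=1}^n a_{ij}\,\partial_{x_i}\partial_{x_j}W,
\end{equation*}
with $R_i(W):=f_i(u_\pm+W)-f_i(u_\pm)-f_i'(u_\pm)W = O(W^2)$, smooth in $W$. The plan is: (i) produce a local-in-time periodic solution in $H^{[n/2]+2}(\mathbb{T}^n)$ by standard parabolic theory (the system is uniformly parabolic since $A$ is positive definite, and the nonlinear flux is smooth); (ii) show the zero-mean property is propagated; (iii) run a weighted energy estimate exploiting the Poincaré inequality on $\mathbb{T}^n$ for zero-mean functions to get exponential decay, which in particular provides the smallness needed to continue the solution for all time.

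For the zero-mean property, integrate the equation for $W$ over $\mathbb{T}^n$: the convective and viscous terms vanish by periodicity, so $\tfrac{d}{dt}\int_{\mathbb{T}^n}W\,dx=0$, and $\int W_0\,dx=0$ gives the conservation claim. Because $W(\cdot,t)$ has zero mean on $\mathbb{T}^n$, the Poincaré inequality yields $\|W\|_{L^2(\mathbb{T}^n)}\le C_P\|\nabla W\|_{L^2(\mathbb{T}^n)}$, and the positive definiteness of $A$ gives $\sum_{ij}a_{ij}\partial_{x_i}W\partial_{x_j}W\ge \alpha|\nabla W|^2$ for some $\alpha>0$. Multiplying the $W$ equation by $W$ and integrating, the linear convective term vanishes, and one obtains
\begin{equation*}
\tfrac12 \tfrac{d}{dt}\|W\|_{L^2}^2 + \alpha\|\nabla W\|_{L^2}^2 \le C\|W\|_{L^\infty}\|W\|_{L^2}\|\nabla W\|_{L^2}.
\end{equation*}
As long as $\|W\|_{L^\infty}$ stays small, the right-hand side is absorbed into the dissipation, and Poincaré then yields $\tfrac{d}{dt}\|W\|_{L^2}^2 + 2\bar c\|W\|_{L^2}^2 \le 0$ for some $\bar c>0$.

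To upgrade to $H^{[n/2]+2}$ and then to $W^{1,\infty}$, I would differentiate the equation $\partial^\alpha$ times for $|\alpha|\le [n/2]+2$ and perform analogous energy estimates. Each differentiated equation has the same linear structure; the nonlinear commutators $[\partial^\alpha,R_i(W)]$ are standard Moser-type products bounded by $C\|W\|_{H^{[n/2]+2}}^2$ via Sobolev embedding $H^{[n/2]+2}(\mathbb{T}^n)\hookrightarrow L^\infty(\mathbb{T}^n)$. Summing over $\alpha$ and using Poincaré on each nonzero-order derivative (automatic since these themselves have zero mean, being derivatives of periodic functions) produces
\begin{equation*}
\tfrac{d}{dt}\|W\|_{H^{[n/2]+2}}^2 + 2\bar c\,\|W\|_{H^{[n/2]+2}}^2 \le C\|W\|_{H^{[n/2]+2}}^3.
\end{equation*}
A standard continuity argument starting from $\|W(\cdot,0)\|_{H^{[n/2]+2}}\le\varepsilon\le\varepsilon_0$ closes the estimate and yields $\|W(t)\|_{H^{[n/2]+2}}\le C\varepsilon\, e^{-\bar c t}$, while simultaneously giving global existence by continuation. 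Finally, Sobolev embedding $H^{[n/2]+2}(\mathbb{T}^n)\hookrightarrow W^{1,\infty}(\mathbb{T}^n)$ (valid since $[n/2]+2>n/2+1$) delivers the stated $W^{1,\infty}$ bound \eqref{pe-de}.

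The main technical point—really the only delicate one—is keeping $\bar c$ uniform while closing the nonlinear term: one needs the smallness $\varepsilon_0$ chosen so that $C\|W\|_{H^{[n/2]+2}}^3$ is beaten by $\bar c\|W\|_{H^{[n/2]+2}}^2$ along the entire trajectory, which in turn uses the fact that the exponential decay itself enforces this bound a posteriori. Everything else is standard parabolic machinery on the torus.
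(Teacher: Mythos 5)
Your proposal is correct and is exactly the approach the paper has in mind: the paper's own ``proof'' of this lemma is a single sentence invoking ``standard $L^2$ energy estimates and the Poincar\'e inequality'' with details omitted, and your write-up supplies precisely those details (conservation of zero mean so that Poincar\'e applies at order zero, coercivity from positive definiteness of $A$, Moser-type commutator bounds at order $[n/2]+2$, continuity argument for smallness, and Sobolev embedding $H^{[n/2]+2}(\mathbb{T}^n)\hookrightarrow W^{1,\infty}$).
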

\begin{proof}
This lemma can be proved by using standard $L^2$ energy estimates and the Poincare's inequality, we omit the details. 
\end{proof}

In order to study the large time behavior of \eqref{eq},\eqref{equ}, we define new weight function $\eta$ and new periodic solutions $\tilde{u}_{\pm},$

\begin{equation}
\eta:=\frac{{u}^{c}-u_{-}}{|u_-|},\quad \tilde{u}_{\pm}=\bar{u}_{\pm}-u_{\pm},
\end{equation}
then the proper ansatz $\bar{u}$ is constructed as

\begin{equation}\label{ansatz}
\begin{aligned}
\bar{u}=(1-\eta)\tilde{u}_-+\eta\tilde{u}_++\hat{u},
\end{aligned}
\end{equation}
which is periodic in the $x_i$ direction for $i=2,...,n.$

According to the definition of $\bar{u}$ in \eqref{ansatz}, we get
\begin{align}\label{pU}
&\partial_t\bar{u}=\partial_{t}\eta(\tilde{u}_+-\tilde{u}_-)+(1-\eta)\partial_{t}\tilde{u}_-+\eta\partial_{t}\tilde{u}_++\partial_{t}\hat{u}, \\
&\partial_{t}\tilde{u}_{\pm}=-\sum\limits_{i=1}^{n}\partial_{x_i}(f_i(\bar{u}_{\pm})-f_i(u_{\pm}))+\sum\limits_{i, j=1}^{n} a_{i j} (\tilde{u}_{\pm})_{x_{i} x_{j}}\nonumber .
\end{align}

Note that $\hat{u}$ is independent of $x_i$, $i=2,...,n$. By direct calculation, $\hat{u}$ satisfies
\begin{align}\label{BU}
{\partial_{t} \hat{u}+\sum\limits_{i=1}^{n}\partial_{x_i}(f_i(\hat{u}))=\sum\limits_{i, j=1}^{n} a_{i j} \hat{u}_{x_{i} x_{j}}}-N(u^c,u^r),
\end{align}
where $N(u^c,u^r)$ is the nonlinear interaction term which is also independent of $x_i$, $i=2,...,n$.
\begin{align}\label{N}
-N(u^c,u^r)\equiv \left(f_1^{\prime}\left(u^c+u^{r}\right)-f_1^{\prime}\left(u^{r}\right)\right) \partial_{x_1} u^{r}+f_1^{\prime}\left(u^c+u^{r}\right) \partial_{x_1} u^c+a_{11}\partial_{x_1}^2 u^r.
\end{align}
Combining \eqref{pU}-\eqref{N}, we have
\begin{equation}\label{baru}
\begin{aligned}
\partial_{t}\bar{u}+\sum\limits_{i=1}^{n}\partial_{x_i}(f_i(\bar{u}))=\sum\limits_{i, j=1}^{n} a_{i j} \bar{u}_{x_{i} x_{j}}+J,
\end{aligned}
\end{equation}
where
\begin{equation}\label{key}
\begin{aligned}
J=&\bigg\{\sum\limits_{i=1}^{n}\partial_{x_i}(f_i(\bar{u})-f_i(\hat{u}))-(1-\eta)\partial_{x_i}(f_i(\bar{u}_{-})-f_i(u_{-}))-\eta\partial_{x_i}(f_i(\bar{u}_{+})-f_i(u_{+}))\\
&-\sum_{i=1}^{n}a_{i1}\partial_{x_1}\eta(\tilde{u}_{+}-\tilde{u}_-)_{x_i}-\sum_{j=1}^{n}a_{1j}\partial_{x_{1}}\eta(\tilde{u}_+-\tilde{u}_-)_{x_j}-a_{11}\partial_{x_{1}}^2\eta(\tilde{u}_+-\tilde{u}_-)\\
&+\partial_{t}\eta(\tilde{u}_+-\tilde{u}_-)\bigg\}+\bigg\{-N(u^c,u^r)\bigg\}=J_1+J_2.
\end{aligned}
\end{equation}
The properties of $J$ can be obtained quickly.

\begin{Lemma}[\cite{Y}](The $L^p$ estimates for J)\label{J} For $\forall \epsilon>0,$  it holds that
	
	\begin{equation}\label{Jp}
	\begin{aligned}
	\|J\|_{L^p(\Omega)}\leq C_{\epsilon,p}(1+t)^{-\frac{1}{2}(1+\frac{1}{p}(1-\epsilon))},\quad \forall p\in[1,+\infty)\quad  (t\geq T_0).
	\end{aligned}
	\end{equation}
	
\end{Lemma}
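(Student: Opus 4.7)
The plan is to split $J = J_1 + J_2$ as in \eqref{key}. The heuristic is that $J_1$ is the extra source term introduced by the multi-dimensional periodic ansatz, and every summand in it is linear in either $\tilde u_\pm$ or $\partial_{x_i}\tilde u_\pm$; by Lemma \ref{lem-periodic-solution} these quantities are $W^{1,\infty}$-bounded by $Ce^{-\bar c t}$, so $J_1$ will decay exponentially in $t$. The piece $J_2 = -N(u^c, u^r)$, on the other hand, is independent of $x'$, so $\|J_2\|_{L^p(\Omega)} = |\mathbb{T}^{n-1}|^{1/p}\|N(u^c, u^r)\|_{L^p(\mathbb{R})}$ and the required bound reduces to the one-dimensional interaction estimate of Yoshida \cite{Y}.

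For $J_1$ the key step is to Taylor-expand
\begin{equation*}
f_i(\bar u) - f_i(\hat u) - (1-\eta)\bigl(f_i(\bar u_-) - f_i(u_-)\bigr) - \eta\bigl(f_i(\bar u_+) - f_i(u_+)\bigr)
\end{equation*}
around the reference states, using $\bar u - \hat u = (1-\eta)\tilde u_- + \eta\tilde u_+$ and $\bar u_\pm - u_\pm = \tilde u_\pm$. Every resulting summand factorizes as an $x_1$-dependent coefficient (built from $\eta, 1-\eta, \hat u - u_\pm, \partial_{x_1}\eta, \partial_{x_1}\hat u$ and derivatives of $f_i$) multiplied by $\tilde u_\pm$ or $\partial_{x_i}\tilde u_\pm$. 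The remaining pieces of $J_1$ in \eqref{key}, involving $\partial_{x_1}\eta (\tilde u_+-\tilde u_-)_{x_i}$, $\partial_{x_1}^2\eta(\tilde u_+-\tilde u_-)$ and $\partial_t\eta(\tilde u_+-\tilde u_-)$, have the same form once one uses $\partial_t u^c = a_{11}\partial_{x_1}^2 u^c$ (valid under our normalization $\lambda = 0$). Putting the periodic factor in $L^\infty(\Omega)$ (bounded by $Ce^{-\bar c t}$) and the $x_1$-dependent factor in $L^p(\mathbb{R})$ (bounded with polynomial-in-$(1+t)$ growth by Lemmas \ref{rl} and \ref{cl}) and applying H\"older and Fubini, one obtains
\begin{equation*}
\|J_1\|_{L^p(\Omega)} \leq C(1+t)^{\alpha(p)}e^{-\bar c t}
\end{equation*}
for some polynomial $\alpha(p)$, which beats the required rate by an exponential factor.

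For $J_2$, the reduction $\|J_2\|_{L^p(\Omega)} = |\mathbb{T}^{n-1}|^{1/p}\|N(u^c, u^r)\|_{L^p(\mathbb{R})}$ brings us to the one-dimensional setting. I would split $\mathbb{R}$ at the interaction curve $X(t)$ from Lemma \ref{xl}. The structural zero $f_1'' \equiv 0$ on $(-\infty, 0)$ from \eqref{f''} together with the monotonicity of $\hat u$ and the identity $\hat u(X(t),t) = 0$ imply $f_1'(u^c+u^r) \equiv 0$ on $\{x_1 \leq X(t)\}$; hence the second summand of $N$ vanishes there, and the first reduces to $-f_1'(u^r)\partial_{x_1}u^r$, which is small because $u^r$ itself is small on $\{x_1 \leq X(t)\}$ (exponentially for $x_1 \leq 0$ by Lemma \ref{rl}(3), and pointwise $O(X(t)/(1+t))$ inside the fan). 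On $\{x_1 \geq X(t)\}$ the Gaussian tail in the explicit formula \eqref{c} for $u^c$, the identity $u^r(X(t),t) = |u^c(X(t),t)|$ defining $X(t)$, and the pointwise / $L^p$ controls on $\partial_{x_1}u^r$, $\partial_{x_1}u^c$, $\partial_{x_1}^2u^r$ from Lemmas \ref{rl}(2) and \ref{cl} combine with Lemma \ref{xl}(3) to yield the stated exponent $-\frac{1}{2}\bigl(1+\frac{1}{p}(1-\epsilon)\bigr)$, with the $\epsilon$-loss arising from the logarithmic $X(t)$-factor.

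The main technical obstacle will be the precise bookkeeping in the transition zone $x_1 \approx X(t)$, where $u^c$ and $u^r$ are simultaneously of comparable size and neither summand of $N$ is pointwise small; this is exactly the one-dimensional interaction estimate from \cite{Y}, which is why the lemma is cited from there. The structural contribution of the present proof is therefore mostly to (i) identify $J_1$ as the new piece contributed by the multi-dimensional periodic ansatz and show it is exponentially negligible, and (ii) extract the factor $|\mathbb{T}^{n-1}|^{1/p}$ from the $x'$-independence of $J_2$, thereby reducing the multi-dimensional bound to the known one-dimensional estimate.
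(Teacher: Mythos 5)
Your proposal is correct and follows essentially the same route as the paper: decompose $J=J_1+J_2$ as in \eqref{key}, bound $J_1$ by $C\varepsilon e^{-\bar c t}$ using the exponential $W^{1,\infty}$ decay of $\tilde u_\pm$ from Lemma~\ref{lem-periodic-solution} (the paper estimates $\|J_1\|_{L^1}$ and $\|J_1\|_{L^\infty}$ and interpolates, while you invoke a H\"older/Fubini factorization of the periodic and $x_1$-dependent factors --- equivalent variants, since any polynomial growth is absorbed by the exponential), and reduce the $x'$-independent $J_2=-N(u^c,u^r)$ to the one-dimensional estimate of Proposition~3.1 in \cite{Y}. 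Your extra sketch of the $X(t)$-splitting for $N(u^c,u^r)$ is a faithful description of what \cite{Y} does, but both you and the paper ultimately defer that step to the cited reference.
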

\begin{proof}Note that $\|J\|_{L^p}\leq \|J_1\|_{L^p}+\|J_2\|_{L^p},$ then  we divided it into two parts:

	Step 1: After simple calculation, using lemmas \ref{rl}-\ref{cl}, we have
	\begin{align}
	J_1=&O\{(\bar{u}-\hat{u})\partial_{x_{1}}\hat{u}\}+O\{(1-\eta)(\bar{u}-\bar{u}_-)\partial_{x_i}\tilde{u}_-,\eta(\bar{u}-\bar{u}_+)\partial_{x_i}\tilde{u}_+\}\nonumber\\
	&+O\{|\partial_{x_1}\eta||(\tilde{u}_{\pm})|,|\partial_{x_1}\eta||(\tilde{u}_{\pm})_{x_i}|,|\partial_{x_{1}}^2\eta||\tilde{u}_{\pm}|\}=:J_{11}+J_{12}+J_{13}.\nonumber
	\end{align}
	And
	\begin{equation}\label{J11}	
	\begin{aligned}
	&\|J_{11}\|_{L^1(\Omega)}=\int_{\Omega}|(\bar{u}-\hat{u})\partial_{x_1}\hat{u}|dx
	\leq  \int_{\Omega} |(1-\eta)\tilde{u}_-+\eta\tilde{u}_+|\partial_{x_1}(u^c+u^r) dx  \\
	\leq &C\varepsilon e^{-\bar{c}t}\bigg(\int_{\Omega}e^{-\frac{c_0x_1^2}{1+t}}dx+\|\partial_{x_{1}} u^r\|_{L^1(\Omega)}\bigg)\leq C\varepsilon e^{-\bar{c}t},
	\end{aligned}
	\end{equation}	
	\begin{equation}\label{J12}
	\begin{aligned}
	&\|J_{12}\|_{L^1(\Omega)}=\int_{\Omega}|(1-\eta)(\bar{u}-\bar{u}_-)\partial_{x_i}\tilde{u}_-+\eta(\bar{u}-\bar{u}_+)\partial_{x_i}\tilde{u}_+|dx \\
	\leq &\int_{\Omega}|\eta(1-\eta)||(\tilde{u}_{\pm},(\tilde{u}_{\pm})_{x_i})|+|(1-\eta)(u^r-u_-),\eta(u^r-u_+)||(\tilde{u}_{\pm})_{x_i}|dx\\
	\leq &C\varepsilon e^{-\bar{c}t}\int_{\Omega}e^{-\frac{c_0x_1^2}{1+t}}dx
	+C_{\delta}\varepsilon e^{-\bar{c}t}(1+t)^{-1+\delta}\bigg(\int_{\mathbb{R}_{\pm}\times\mathbb{T}^{n-1}} \mathrm{e}^{-\delta\left|x-\lambda_{\pm} t\right|}dx\bigg)\\
	\leq &C\varepsilon e^{-\bar{c}t}.
	\end{aligned}
	\end{equation}
	Similarly, $\|J_{13}\|_{L^1(\Omega)}\leq C\varepsilon e^{-\bar{c}t}.$ Then we get $\|J_1\|_{L^1(\Omega)}\leq C\varepsilon e^{-\bar{c}t}.$
	
	Besides that, \eqref{pe-de} tell us that $\|J_1(t)\|_{L^{\infty}(\Omega)}\leq C\varepsilon e^{-\bar{c}t}.$ 	
	As for $p\in (1,\infty),$ it yields that
	\begin{equation}\label{J1}
	\|J_1(t)\|_{L^p(\Omega)}\leq \|J_1(t)\|_{L^{\infty}(\Omega)}^{1-\frac{1}{p}}\|J_1(t)\|_{L^{1}(\Omega)}^{\frac{1}{p}}\leq C\varepsilon e^{-\bar{c}t}.
	\end{equation}
	Step 2: Note that $J_2=N(u^c,u^r),$ and this term is only related to $x_1$, is unrelated to $x'.$ Hence, the $L^p$ estimates of this term on $\Omega$ is similar as the term $F(U,U^r)$ in \cite{Y}. We omit the details(see Proposition 3.1 in \cite{Y}):
	
	\begin{equation}\label{J2}
	\begin{aligned}
	\|J_2(t)\|_{L^p(\Omega)}\leq C_{\epsilon,p}(1+t)^{-\frac{1}{2}(1+\frac{1}{p}(1-\epsilon))},\quad (t\geq T_0).
	\end{aligned}
	\end{equation}
	By \eqref{J1}-\eqref{J2}, we get \eqref{Jp}.
	
\end{proof}
\subsection{Main results}

Now we present our main theorem

\begin{Thm}\label{mt}Assume that the flux function $f$ satisfies \eqref{f''}, the far field states $u_-<u_+,$ and the periodic perturbation $V_0(x)\in H^{[\frac{n}{2}]+2}(\mathbb{T}^{n})$ satisfies \eqref{cd-zeroav}. Then there exists a unique global smooth solution $u$ of \eqref{eq},\eqref{equ} satisfying
	\begin{equation}\label{st}
	\begin{aligned}
	\|u(x,t)-\hat{u}(x_1,t)\|_{L^{\infty}(\mathbb{R}^{n})}\leq C_{\epsilon}(1+t)^{-\frac{1}{2}+\epsilon},
	\end{aligned}
	\end{equation}
	for any $\epsilon>0,$ $C_{\epsilon}>0$  depending on $\epsilon$, and $\hat{u}(x_1,t)$ is defined in \eqref{hatu}.
\end{Thm}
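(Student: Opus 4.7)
The plan is to work with the perturbation $\phi:=u-\bar{u}$ around the ansatz. Subtracting \eqref{baru} from \eqref{eq} gives
\begin{equation*}
\partial_{t}\phi+\sum_{i=1}^{n}\partial_{x_i}\bigl(f_i(u)-f_i(\bar{u})\bigr)=\sum_{i,j=1}^{n}a_{ij}\phi_{x_ix_j}-J,
\end{equation*}
and the identity $\tilde{u}_{\pm}(x,0)=V_0(x)$ combined with $(1-\eta)+\eta=1$ yields the cancellation $\phi(x,0)=0$, which is precisely the design purpose of the ansatz \eqref{ansatz}. Since $\phi$ is periodic in $x_2,\dots,x_n$ and decays as $x_1\to\pm\infty$, I would split $\phi=\mr{\phi}+\ac{\phi}$ with
\begin{equation*}
\mr{\phi}(x_1,t):=\frac{1}{|\Torus^{n-1}|}\int_{\Torus^{n-1}}\phi(x_1,x',t)\,dx',\qquad \ac{\phi}:=\phi-\mr{\phi},
\end{equation*}
and decompose $J=\mr{J}+\ac{J}$ in the same way. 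Because the $x'$-dependence of $J$ enters only through $\tilde{u}_{\pm}$, Lemma \ref{J} and \eqref{pe-de} imply that $\ac{J}$ is time-exponentially small, while $\mr{J}$ is essentially driven by $J_2=-N(u^c,u^r)$ and carries the algebraic $L^p$-decay of Lemma \ref{J}.

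For the non-zero mode, the Poincar\'e inequality on $\Torus^{n-1}$ provides the spectral gap $\|\ac{\phi}\|_{L^2}\leq C\|\nabla_{x'}\ac{\phi}\|_{L^2}$. Carrying out $L^2$ and then higher $H^{k}$ energy estimates up to $k=[n/2]+2$ on the equation for $\ac{\phi}$, the elliptic coercivity of $A=(a_{ij})$ combined with the gap produces a coercive term $c_{0}\|\ac{\phi}\|_{H^{k}}^{2}$ on the left. The transport nonlinearity is absorbed via a standard continuity/bootstrap argument based on the smallness of $\varepsilon$, and the time-exponentially small source $\ac{J}$ closes via Gronwall, giving the content of Proposition \ref{pet2}: $\|\ac{\phi}(\cdot,t)\|_{H^{[n/2]+2}(\Omega)}\leq C\varepsilon e^{-ct}$, hence $\|\ac{\phi}(\cdot,t)\|_{L^{\infty}(\Omega)}\leq C\varepsilon e^{-ct}$ by Sobolev embedding.

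For the zero mode, the equation satisfied by $\mr{\phi}(x_1,t)$ is a one-dimensional scalar viscous conservation law linearised around $\hat{u}$, with source $\mr{J}$ plus a quadratic-in-$\ac{\phi}$ term coming from the $x'$-average of $f_i(u)-f_i(\bar{u})$ that is time-exponentially small by the previous step. This places us inside the Matsumura--Yoshida \cite{MY} and Yoshida \cite{Y} framework. I would introduce the anti-derivative $\Phi$ with $\p_{x_1}\Phi=\mr{\phi}$, well-defined because the zero-mean condition \eqref{cd-zeroav} together with the integrable-in-$x_1$ bound from Lemma \ref{J} propagates $\int_{\R}\mr{\phi}(x_1,t)dx_1$ to remain controlled. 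Weighted $L^{2}$ energy estimates on $(\Phi,\mr{\phi},\p_{x_1}\mr{\phi})$, exploiting the monotonicity $\p_{x_1}\hat{u}>0$ in the rarefaction region and the classical contact-wave weight near the contact part, together with the location curve $X(t)$ of Lemma \ref{xl} separating the two regions, produce bounds of the form
\begin{equation*}
\|\Phi(\cdot,t)\|_{L^{2}(\R)}^{2}+(1+t)^{\frac{1}{2}-\epsilon}\|\mr{\phi}(\cdot,t)\|_{L^{2}(\R)}^{2}+(1+t)^{\frac{3}{2}-\epsilon}\|\p_{x_1}\mr{\phi}(\cdot,t)\|_{L^{2}(\R)}^{2}\leq C_{\epsilon},
\end{equation*}
which is Proposition \ref{pet1}, and Gagliardo--Nirenberg then yields $\|\mr{\phi}(\cdot,t)\|_{L^{\infty}(\R)}\leq C_{\epsilon}(1+t)^{-\frac{1}{2}+\epsilon}$.

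Combining the two pieces and using the decomposition $u-\hat{u}=\mr{\phi}+\ac{\phi}+(1-\eta)\tilde{u}_{-}+\eta\tilde{u}_{+}$, the exponential decay of $\tilde{u}_{\pm}$ from Lemma \ref{lem-periodic-solution} gives \eqref{st}. The main obstacle I anticipate is closing the weighted energy estimate for the zero mode: one must simultaneously control a rarefaction-type dissipation with rate $(1+t)^{-1}$ and a contact-type dissipation with rate $(1+t)^{-1/2}$, localised in disjoint regions separated by $X(t)$, while keeping the $L^{p}$-source of Lemma \ref{J} and the quadratic contribution from $\ac{\phi}$ under simultaneous control in an \emph{unweighted} $x_1$-integration on $\R$. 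The new weight $\eta=(u^{c}-u_{-})/|u_{-}|$ built into the ansatz \eqref{ansatz}, which smoothly interpolates between the two far-field behaviors and vanishes on the contact wave, is precisely the device that makes this balance tractable and that allows the one-dimensional technology of \cite{MY,Y} to be adapted to the multi-dimensional periodic setting.
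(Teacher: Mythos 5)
Your high-level strategy—decompose $\phi=u-\bar{u}$ into the $x'$-average $\mr{\phi}$ and the fluctuation $\ac{\phi}$, prove exponential decay of $\ac{\phi}$ via the Poincar\'e gap, and prove algebraic decay of $\mr{\phi}$ via a one-dimensional argument—does match the paper's plan. (Incidentally, you have swapped the labels: Proposition~\ref{pet1} is the non-zero-mode estimate and Proposition~\ref{pet2} is the zero-mode estimate.) However, the execution you propose for each mode diverges from the paper, and the zero-mode step as written would fail.

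For the zero mode you propose the anti-derivative method: introduce $\Phi$ with $\partial_{x_1}\Phi=\mr{\phi}$ and run weighted $L^2$ estimates on $(\Phi,\mr{\phi},\partial_{x_1}\mr{\phi})$. This cannot be made to work here, because the anti-derivative is not well defined in $L^2$. Integrating the $\mr{\phi}$-equation \eqref{zm} over $\mathbb{R}$ gives $\frac{d}{dt}\int_{\mathbb{R}}\mr{\phi}\,dx_1=-\int_{\mathbb{R}}\mr{J}\,dx_1$, and $\mr{J}$ contains the interaction term $-N(u^c,u^r)$ from \eqref{N}, which has no reason to have zero mean over $\mathbb{R}$; in fact $\|\mr J(t)\|_{L^1}\sim(1+t)^{-1+\epsilon}$ by Lemma~\ref{J}, so $\int_{\mathbb{R}}\mr{\phi}\,dx_1$ is not conserved and is at best $O((1+t)^{\epsilon})$. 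So $\Phi$ does not decay at $x_1=+\infty$ and is not in $L^2$. This is precisely the reason the Matsumura--Yoshida / Yoshida framework \cite{MY,Y} that the paper follows works with direct $L^p$ estimates on $\mr{\phi}$ itself (multiplying \eqref{zmi} by $|\mr{\phi}|^{p-2}\mr{\phi}$, exploiting the sign of the term $Q_1$ in \eqref{Q1} coming from $f_1''\geq 0$ and $\partial_{x_1}\hat u>0$, and then using the $(1+t)^{\alpha}$-weighted Gr\"onwall trick \eqref{l4.57}), rather than the anti-derivative method. A second problem is that your claimed gain $\|\partial_{x_1}\mr{\phi}\|_{L^2}\lesssim(1+t)^{-3/4+}$ is not what the paper obtains (Lemma~\ref{L4.6} only gives $(1+t)^{-1/4+\epsilon}$ at $p=2$), and the $L^\infty$ rate $(1+t)^{-1/2+\epsilon}$ in \eqref{st} is in fact extracted by sending $p\to\infty$ in the $L^p$ family together with the Gagliardo--Nirenberg interpolation \eqref{mr}; the $L^2$--$H^1$ information alone does not close this.

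For the non-zero mode, your proposal to run $H^{[n/2]+2}$ energy estimates and then Sobolev-embed is a different route from the paper's. The paper avoids higher regularity altogether: it propagates $L^p$-norms for all $p\in[2,\infty)$ and the first gradient in $L^p$ (Lemmas~\ref{L4.1}--\ref{l4.3}), and then uses the anisotropic Gagliardo--Nirenberg inequality on $\mathbb{R}\times\mathbb{T}^{n-1}$ (Lemma~\ref{GN}) to get $\|\ac\phi\|_{L^\infty}$. Your $H^k$ approach would need additional work to control commutators with the slowly decaying background $\partial_{x_1}\hat u\sim (1+t)^{-1/2}$ and to bootstrap regularity that is not provided by the paper's a priori bounds (only $H^1$ is established in Lemma~\ref{H1}); it is plausible but not a shortcut, and it is not what the paper does.
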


\begin{Remark}The stability results \eqref{st} is proved by the $L^p$ energy estimates.  Since the original perturbation $u(x,t)-\hat{u}(x_1,t)$ is not integrable in $L^p$ space, to overcome this, a new ansatz $\bar{u}$\eqref{ansatz} is constructed which is periodic in $x_i, i=2,...,n$ direction. On the one hand, for $k\geq 0,$
	
	\begin{equation}\label{exd}
	\|\partial_{x}^{k}(\bar{u}(x,t)-\hat{u}(x_1,t))\|_{L^{\infty}(\mathbb{R}^{n})}\leq C\varepsilon e^{-\bar{c}t},
	\end{equation}
	on the other hand, using the $L^p$ estimates for the new function $\phi=u(x,t)-\bar{u}(x,t)$, we could finally get \eqref{st} .
\end{Remark}
\begin{Remark}
	We obtain the same decay rate as in Theorem 1.4 in \cite{Y}, but our initial perturbations are not integrable, which is the key condition in \cite{Y}. Without this condition, the decay rate is only $(1+t)^{-1/4+\epsilon}.$
\end{Remark}

\section{Reformulation of the problem}

\subsection{The perturbation function $\phi(x,t)$}

Setting
\begin{align*}
u(x,t)\equiv \bar{u}(x,t)+\phi(x,t).
\end{align*}

We obtain the perturbation equation by (\ref{eq}), \eqref{baru},
\begin{align}\label{aape}
\begin{cases}
\partial_{t} \phi+\sum\limits_{i=1}^{n}\partial_{x_i}(f_i(\bar{u}+\phi)-f_i(\bar{u}))=\sum\limits_{i, j=1}^{n} a_{i j} \phi_{x_ix_j}-J,\\
\phi(x,0)=\phi_{0}(x)\equiv u_{0}(x)-\bar{u}(x,0)=0.
\end{cases}
\end{align}
Now we list some energy results for $\phi.$

\begin{Lemma}\label{Li}[$L^{\infty}$-boundness]The unique solution $\phi(x,t)$ of the Cauchy problem \eqref{aape} satisfies
	\begin{equation}
	\sup_{t\in[0,+\infty),x\in\Omega}|\phi(x,t)|\leq C.
	\end{equation}
\end{Lemma}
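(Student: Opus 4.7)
The plan is to bound $\phi = u - \bar{u}$ by estimating $u$ and $\bar{u}$ separately in $L^\infty$, rather than attacking the perturbation equation \eqref{aape} directly. The latter is not viable: extrapolating Lemma \ref{J} to $p = \infty$ suggests $\|J\|_{L^\infty} \lesssim (1+t)^{-1/2}$, which is not integrable in time, so a Duhamel-type maximum principle applied to \eqref{aape} would lose control as $t \to \infty$.

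The first step is to verify that the ansatz $\bar{u}$ is uniformly bounded on $\Omega \times [0,\infty)$. From the explicit construction \eqref{ansatz}, $\bar{u} = (1-\eta)\tilde{u}_- + \eta \tilde{u}_+ + \hat{u}$, where the weight $\eta = (u^c - u_-)/|u_-|$ takes values in $[0,1]$ since $u^c \in [u_-,0]$ by Lemma \ref{cl}; the periodic correctors $\tilde{u}_\pm = \bar{u}_\pm - u_\pm$ are uniformly bounded (in fact, decay exponentially) by \eqref{pe-de}; and $\hat{u} = u^c + u^r$ lives in a bounded interval by Lemmas \ref{rl} and \ref{cl}. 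This yields $\|\bar{u}\|_{L^\infty(\Omega \times [0,\infty))} \leq C$.

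The second step is to apply the classical parabolic maximum principle to the original equation \eqref{eq}. Rewriting \eqref{eq} in quasilinear non-divergence form
\begin{equation*}
\partial_t u + \sum_{i=1}^n f_i'(u)\, \partial_{x_i} u = \sum_{i,j=1}^n a_{ij} \partial_{x_i x_j}^2 u,
\end{equation*}
we have a uniformly parabolic scalar equation (since $A$ is positive definite) with no zeroth-order term, so the maximum/minimum principle gives $\inf_{x \in \Omega} u_0 \leq u(x,t) \leq \sup_{x \in \Omega} u_0$. The initial datum $u_0 = \hat{u}_0 + V_0$ is bounded, because $\hat{u}_0$ is a finite superposition of bounded planar profiles and $V_0 \in H^{[n/2]+2}(\mathbb{T}^n) \hookrightarrow L^\infty$ by Sobolev embedding. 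Combining both bounds, $\|\phi(\cdot,t)\|_{L^\infty(\Omega)} \leq \|u\|_{L^\infty} + \|\bar{u}\|_{L^\infty} \leq C$ uniformly in $t$.

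The one subtlety is in justifying the parabolic maximum principle on the unbounded cylinder $[0,T] \times \Omega$ with $\Omega = \mathbb{R} \times \mathbb{T}^{n-1}$. The transverse directions are already compact, and in the $x_1$-direction $u$ stays close to the bounded periodic solutions $\bar{u}_\pm(x,t)$ as $x_1 \to \pm\infty$, so the usual barrier argument — comparing $u$ to $\sup u_0 + \delta(x_1^2 + 1)e^{Kt}$ on expanding boxes and letting the box exhaust $\Omega$, then sending $\delta \to 0$ — eliminates boundary contributions and delivers the global bound. This is the only delicate point; everything else follows from the already-established structure of $\bar{u}$.
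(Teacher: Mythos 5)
Your argument is correct and matches what the paper does: the paper dispatches this lemma with the single sentence that the $L^\infty$ bound ``can be obtained by the maximum principle,'' and your fleshed-out version — maximum principle on the original conservation law \eqref{eq} for $u$, explicit $L^\infty$ bound on the ansatz $\bar u$ from \eqref{ansatz}, \eqref{pe-de}, and Lemmas \ref{rl}--\ref{cl}, then subtraction — is the natural way to fill in that sentence. The only point worth flagging in your barrier argument is that controlling $\delta(x_1^2+1)e^{Kt}$ against the drift $f_1'(u)\,\partial_{x_1}$ requires $f_1'(u)$ bounded, which is mildly circular; this is resolved in the standard way by running the argument on the maximal interval of existence of the bounded local solution from Proposition \ref{let} and observing that the resulting bound $\sup|u_0|$ is independent of that interval, so it propagates by continuation.
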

\begin{Lemma}\label{H1}[$H^{1}$-boundness]The unique solution $\phi(x,t)$ of the  Cauchy problem \eqref{aape} satisfies
	\begin{equation}
	\begin{aligned}
	\|\phi(t)\|^2_{H^1(\Omega)}+\int_{0}^{+\infty}\bigg(Q_2(\tau)+\|\nabla\phi(\tau)\|^2_{H^1(\Omega)}\bigg)d\tau\leq C(\phi_0),\quad (t\geq 0),
	\end{aligned}
	\end{equation}	
	where $Q_2=Q_2(t)$ is given by
	\begin{equation}
	\begin{aligned}
	Q_2=\bigg(\int_{\bar{u}+\phi> 0, \bar{u}>0}|\phi|^{2} \partial_{x_1} \hat{u} \mathrm{d} x+\int_{\bar{u}+\phi> 0, \bar{u}\leq 0}\left(\bar{u}+\phi\right)^2\partial_{x_1} \hat{u}dx+\int_{\bar{u}+\phi\leq 0, \bar{u}> 0}\bar{u}^2\partial_{x_{1}}\hat{u}dx\bigg).
	\end{aligned}
	\end{equation}

\end{Lemma}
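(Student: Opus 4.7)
The plan is to run the standard energy method on the perturbation equation \eqref{aape}, first at the $L^2$-level (testing with $\phi$) and then at the $H^1$-level (testing with $-\Delta\phi$, or equivalently differentiating the equation in $x_k$ and testing with $\phi_{x_k}$). The essential work is to extract the degenerate dissipation $Q_2$ from the first flux $f_1$, to control the transverse flux terms using the exponentially decaying corrector $\tilde u_\pm$, and to absorb the source $J$ using its $L^p$-decay from Lemma \ref{J}.

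Multiplying \eqref{aape} by $\phi$ and integrating over $\Omega$ (with periodicity in $x_2,\dots,x_n$ and vanishing at $x_1=\pm\infty$) yields
\begin{equation*}
\frac{1}{2}\frac{d}{dt}\|\phi\|_{L^2}^2 + A_1 + \sum_{i\geq 2} A_i + \sum_{i,j}a_{ij}\!\int_\Omega \phi_{x_i}\phi_{x_j}\,dx = -\int_\Omega \phi J\,dx,
\end{equation*}
where $A_k:=\int_\Omega \phi\,\partial_{x_k}[f_k(\bar u+\phi)-f_k(\bar u)]\,dx$, and the viscous term is bounded below by $c_0\|\nabla\phi\|_{L^2}^2$ by positive-definiteness of $(a_{ij})$. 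A further integration by parts recasts the key flux term as
\begin{equation*}
A_1 = \int_\Omega R(\bar u,\bar u+\phi)\,\bar u_{x_1}\,dx,\qquad R(v,u):=f_1(u)-f_1(v)-f_1'(v)(u-v),
\end{equation*}
and a sign-case analysis based on the degenerate convexity \eqref{f''} shows that $R$ vanishes on $\{\bar u\leq 0,\ \bar u+\phi\leq 0\}$ and reduces, on the other three regions, to $\tfrac12 f_1''(\xi)\phi^2$, $f_1(\bar u+\phi)$, and $-f_1(\bar u)-f_1'(\bar u)\phi$, respectively. Using the $L^\infty$-bound of Lemma \ref{Li} to keep $\xi$ in a compact set where $f_1''$ is bounded, and writing $\bar u_{x_1}=\hat u_{x_1}+O(\varepsilon e^{-\bar c t})$ via \eqref{exd}, one obtains $A_1\geq cQ_2 - Ce^{-\bar c t}$. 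The transverse terms $A_i$ for $i\geq 2$ are benign because $\hat u$ is independent of $x_i$, so each one carries a factor of $\tilde u_\pm$ or $\partial_{x_i}\tilde u_\pm$ and hence decays exponentially by Lemma \ref{lem-periodic-solution}.

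For the source integral, Cauchy--Schwarz combined with Lemma \ref{J} at $p=2$ gives $|\!\int\phi J\,dx|\leq\|\phi\|_{L^2}\|J\|_{L^2}$ with $\|J\|_{L^2}^2\leq C_\epsilon(1+t)^{-3/2+\epsilon/2}$, which is time-integrable. The surviving factor $\|\phi\|_{L^2}$ is handled by the zero/non-zero mode decomposition introduced in the following subsection: Poincar\'e's inequality on $\mathbb{T}^{n-1}$ bounds the non-zero-mode $L^2$-norm by $\|\nabla_{x'}\phi\|_{L^2}$ (absorbable into the viscous dissipation), while the zero mode reduces to a 1D problem to which the classical scalar estimates apply. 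Time-integration using $\phi_0=0$ closes the $L^2$ portion. The $H^1$ upgrade follows the same pattern after differentiating \eqref{aape} in $x_k$ and testing with $\phi_{x_k}$: the viscous term furnishes the $\|\nabla^2\phi\|_{L^2}^2$ dissipation, the flux nonlinearity generates lower-order terms in $\nabla\phi$ and derivatives of $\bar u$ that are controlled by the first-order $L^2$-bound together with the decay estimates of Lemmas \ref{rl}--\ref{lem-periodic-solution}, and the source gradient term $\int\nabla\phi\cdot\nabla J\,dx$ is rewritten by integration by parts so that no derivative falls on $J$.

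The principal technical obstacle is the pointwise sign-case analysis of $A_1$, which relies on the $L^\infty$-bound of $\phi$ from Lemma \ref{Li} and on the $C^1$-vanishing $f_1(0)=f_1'(0)=0$ ensuring continuity of $R$ across the interfaces $\{\bar u=0\}$ and $\{\bar u+\phi=0\}$, together with balancing the slowly decaying interaction part $J_2=N(u^c,u^r)$ against the degenerate dissipation $Q_2$ via the mode decomposition. The exponentially decaying contributions---$J_1$, the transverse fluxes $A_i$ for $i\geq 2$, and the $\tilde u_\pm$-cross terms generated by the ansatz \eqref{ansatz}---are by comparison straightforward to absorb.
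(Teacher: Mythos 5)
Your extraction of the degenerate dissipation is correct and is the heart of the matter: after one integration by parts the flux term becomes $\int_\Omega R(\bar u,\bar u+\phi)\,\bar u_{x_1}\,dx$ with $R(v,u)=f_1(u)-f_1(v)-f_1'(v)(u-v)$, and your four-region case analysis (using $f_1\equiv 0$ on $(-\infty,0]$, $f_1''>0$ on $[0,\infty)$, and the $L^\infty$ bound to stay on a compact set where $f_1''$ is bounded \emph{below}) produces exactly $Q_2$, modulo the exponentially small replacement $\bar u_{x_1}\rightsquigarrow\hat u_{x_1}$; the transverse fluxes for $i\geq 2$ indeed carry a factor $\partial_{x_i}\tilde u_\pm$ and are harmless. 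The integration by parts on $\int\nabla\phi\cdot\nabla J$ at the $H^1$ level is also the right move, since $\|J\|_{L^2}^2$ \emph{is} time-integrable.

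The gap is in the $L^2$-level source estimate. From Lemma \ref{J}, $\|J\|_{L^2}\lesssim(1+t)^{-3/4+\epsilon/4}$, so $\|J\|_{L^2}$ itself is \emph{not} time-integrable, only $\|J\|_{L^2}^2$ is. The Gronwall route $\frac{d}{dt}\|\phi\|_{L^2}\leq C e^{-\bar ct}\|\phi\|_{L^2}+\|J\|_{L^2}$ therefore gives $\|\phi\|_{L^2}\lesssim(1+t)^{1/4+\epsilon/4}$, not boundedness. Your alternative — Young plus Poincar\'e — closes only the $\ac\phi$ piece (there is no Poincar\'e inequality on $\mathbb{R}$ in the $x_1$-direction, so the leftover $\delta\|\mr\phi\|_{L^2(\mathbb R)}^2$ is not dominated by $Q_2+\|\nabla\phi\|_{L^2}^2$, as $Q_2$ vanishes identically on $\{\bar u\leq 0,\ \bar u+\phi\leq 0\}$); and the phrase ``the zero mode reduces to a 1D problem to which the classical scalar estimates apply'' is either circular (Proposition \ref{pet2} is proved later and assumes \eqref{nu}) or just pushes the same unbounded source onto the 1D problem. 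What is actually needed is to use the \emph{structure} of $J_2=-N(u^c,u^r)$ rather than its bare $L^p$ norms: e.g.\ write $\int_\Omega\phi\,J_2\,dx=\int_{\mathbb R}\mr\phi\,J_2\,dx_1$, integrate by parts the $a_{11}u^r_{x_1x_1}$ piece against $\mr\phi_{x_1}$, and control the result against $Q_2$ and the viscous dissipation using the pointwise bounds of Lemmas \ref{rl}--\ref{xl} (the interaction is concentrated near $X(t)$ where $u^c_{x_1}$, and hence also $\hat u_{x_1}$, is uniformly positive, and $u^r$ is there exponentially close to a constant). You flag this balance as ``the principal technical obstacle'' but do not actually carry it out, and without it the $L^2$ bound does not close. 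A secondary soft spot is the $H^1$-level cubic flux term $\int|\phi_{x_k}|^2\phi_{x_i}\,dx$, which for $n\geq 4$ cannot be absorbed by Gagliardo--Nirenberg plus the $L^2$ a priori bound alone and needs either a smallness input or the higher regularity $V_0\in H^{[n/2]+2}$ to be invoked explicitly.
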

The $L^{\infty}$ boundness of $\phi$ can be obtained by the maximum principle. The $H^1$ boundness of $\phi$ can be verified easily, here we omit the proof of lemma \ref{Li}-\ref{H1}.

\begin{Lemma}[$L^1$-estimates]\label{L1}For $\forall \epsilon>0,$ the unique solution $\phi(x,t)$ of the Cauchy problem \eqref{aape} satisfies
	\begin{equation}
	\|\phi\|_{L^1(\Omega)}\leq C(1+t)^{\epsilon}.
	\end{equation}
\end{Lemma}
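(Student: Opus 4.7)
The plan is to establish the $L^1$-contraction-type inequality
\[
\frac{d}{dt}\|\phi(t)\|_{L^1(\Omega)}\leq \|J(t)\|_{L^1(\Omega)}
\]
for the perturbation equation \eqref{aape}, then integrate in $t$ using $\phi(\cdot,0)\equiv 0$, and finally invoke Lemma \ref{J} with $p=1$ to conclude. Intuitively this is the Kruzkov $L^1$-stability for scalar viscous conservation laws applied to the near-solution pair $(u,\bar{u})$, with the forcing $J$ absorbing the fact that $\bar{u}$ solves \eqref{eq} only approximately.

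To derive the contraction inequality, I multiply \eqref{aape} by the regularized sign $\sigma_\delta(\phi):=\phi/\sqrt{\phi^2+\delta^2}$ (with primitive $\Phi_\delta(\phi)=\sqrt{\phi^2+\delta^2}-\delta\to|\phi|$) and integrate over $\Omega=\mathbb{R}\times\mathbb{T}^{n-1}$. The diffusion, after one integration by parts, contributes $-\int \sigma_\delta'(\phi)\sum a_{ij}\phi_{x_i}\phi_{x_j}\,dx\leq 0$ since $A$ is positive definite and $\sigma_\delta'\geq 0$. Writing $f_i(u)-f_i(\bar{u})=\phi M_i$ with $M_i:=\int_0^1 f_i'(\bar{u}+s\phi)\,ds$ bounded, a second integration by parts using $P_\delta(\phi):=\int_0^\phi \sigma_\delta'(s)s\,ds$ reduces the flux to a remainder $\sum_i \int P_\delta(\phi)\,\partial_{x_i}M_i\,dx$; the boundary terms at $x_1=\pm\infty$ vanish because $u$ and $\bar{u}$ share the end-states $\bar{u}_\pm$ (so $\phi\to 0$ at infinity), while the transverse ones vanish by $x'$-periodicity. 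Since $0\leq P_\delta(\phi)\leq \min(\delta,|\phi|)$ and $P_\delta(\phi)\to 0$ pointwise, passing $\delta\to 0$ in this remainder, together with $|\sigma_\delta(\phi)|\leq 1$ on the source, gives the desired inequality. Integration in time and Lemma \ref{J} with $p=1$ then yield
\[
\|\phi(t)\|_{L^1(\Omega)}\leq \int_0^t\|J(\tau)\|_{L^1(\Omega)}\,d\tau\leq C_\epsilon\int_0^t(1+\tau)^{-1+\epsilon/2}\,d\tau\leq C_\epsilon(1+t)^{\epsilon/2}\leq C_\epsilon(1+t)^{\epsilon}.
\]

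The main obstacle is rigorously justifying the $\delta\to 0$ limit in the flux remainder, since $\bar{u}$ is not spatially integrable on $\Omega$: it is $x'$-periodic and, through the term $(1-\eta)\tilde{u}_-+\eta\tilde{u}_+$ in \eqref{ansatz}, also carries $x_1$-periodic oscillations from $\tilde{u}_\pm$. Consequently $\partial_{x_i}M_i$ need not belong to $L^1(\Omega)$. I would resolve this by splitting $\bar{u}_{x_i}$ into (a) the profile pieces $\hat{u}_{x_1}$ and $\eta_{x_1}(\tilde{u}_+-\tilde{u}_-)$, which are in $L^1(\Omega)$ by Lemmas \ref{rl}-\ref{cl}, and (b) the remaining periodic pieces $(1-\eta)(\tilde{u}_-)_{x_i}+\eta(\tilde{u}_+)_{x_i}$, bounded by $C\varepsilon e^{-\bar{c}t}$ in $L^\infty$ by Lemma \ref{lem-periodic-solution}. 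On (a) the bound $P_\delta\leq \delta$ together with dominated convergence gives $o_\delta(1)$; on (b) I combine $P_\delta\leq |\phi|$ with $\phi\in L^2(\Omega)$ from Lemma \ref{H1} and dominated convergence in $L^2$, multiplied by the $e^{-\bar{c}t}$ coefficient. The contribution of $\phi_{x_i}$ hidden inside $\partial_{x_i}M_i$ is handled by Cauchy-Schwarz using $\|P_\delta(\phi)\|_{L^2}\leq\|\phi\|_{L^2}$ (with $\|P_\delta(\phi)\|_{L^2}\to 0$ by dominated convergence) and $\|\phi_{x_i}\|_{L^2}\leq C$ from Lemma \ref{H1}. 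Any residual term proportional to $\|\phi(t)\|_{L^1}$ carries the factor $e^{-\bar{c}t}$, and a Gronwall step absorbs it without affecting the final $(1+t)^\epsilon$ bound.
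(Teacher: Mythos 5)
Your argument is a Kruzkov-type $L^1$ stability estimate with a convex regularization of $|\cdot|$, followed by Gronwall and then Lemma~\ref{J} with $p=1$ --- which is exactly the shape of the paper's proof. The paper regularizes with a $C^2$ piecewise-polynomial approximation $S_\sigma$ of $|\cdot|$, multiplies by $S_\sigma'(\phi)$, and after writing the flux as a total $x_i$-derivative plus a term proportional to $\bar{u}_{x_i}$, it \emph{drops} the piece proportional to $\partial_{x_1}\hat{u}$ using the sign structure $S_\sigma''\geq 0$, $f_1''\geq 0$, $\partial_{x_1}\hat{u}>0$; the remaining piece, proportional to $\partial_{x_i}(\bar{u}-\hat{u})$, is exponentially small by \eqref{exd}, and Gronwall closes. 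Your version replaces $S_\sigma$ by $\Phi_\delta=\sqrt{\phi^2+\delta^2}-\delta$, writes $f_i(\bar u+\phi)-f_i(\bar u)=\phi M_i$, and performs a \emph{second} integration by parts via $P_\delta(\phi)=\int_0^\phi\sigma_\delta'(s)s\,ds$, after which the whole flux remainder $\int P_\delta(\phi)\,\partial_{x_i}M_i\,dx$ is shown to be either $o_\delta(1)$ or a Gronwall term; this route does not need the sign of $f_1''\partial_{x_1}\hat u$ at all, so it is a modest genuine simplification of the flux treatment. The price you pay for the extra integration by parts is that $\partial_{x_i}M_i$ reintroduces $\phi_{x_i}$, which the paper's version never produces (it writes the $\phi_{x_i}$-carrying piece as an exact $x_i$-derivative of an auxiliary function); your Cauchy--Schwarz + dominated convergence treatment of that piece, using $\|\nabla\phi\|_{L^2(\Omega\times(0,T))}\leq C$ from Lemma~\ref{H1}, does close it.

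One step of your sketch should be tightened: for the periodic piece (b), the statement ``combine $P_\delta\leq|\phi|$ with $\phi\in L^2$ and DCT in $L^2$, multiplied by the $e^{-\bar c t}$ coefficient'' does not quite parse, because the periodic derivatives $(1-\eta)(\tilde u_-)_{x_i}+\eta(\tilde u_+)_{x_i}$ are only in $L^\infty(\Omega)$, not in $L^2(\Omega)$, so there is no Cauchy--Schwarz pairing available, and DCT in $L^1$ requires an a priori $L^1$ bound on $\phi$, which is what you are proving. The correct observation is that $P_\delta(\phi)\leq\Phi_\delta(\phi)$ pointwise (one checks $2\delta\leq y+\delta^2/y$ for $y=\sqrt{\phi^2+\delta^2}\geq\delta$), so this piece contributes at most $C\varepsilon e^{-\bar c t}\int_\Omega\Phi_\delta(\phi)\,dx$; then Gronwall is applied to $Y_\delta(t):=\int_\Omega\Phi_\delta(\phi)\,dx$ (finite since $\Phi_\delta(\phi)\leq\phi^2/(2\delta)$ and $\phi\in L^2$) \emph{before} letting $\delta\to 0$, and monotone convergence $\Phi_\delta\nearrow|\phi|$ then gives the $L^1$ bound. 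Your closing sentence about Gronwall absorbing residuals proportional to $\|\phi\|_{L^1}$ shows you see the shape of the argument, but as written the domination step is the one place the sketch would fail a careful reading. With that repair, the proposal is correct and is a legitimate variant of the paper's proof.
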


\begin{proof} Similar as \cite{HY}, given $\sigma>0$ and let $S_{\sigma}(\eta)$ be a $C^2$ convex approximation to the function $|\eta|,$ e.g.,
	\begin{equation}\label{s1}
	S_{\sigma}(\eta)=\left\{
	\begin{aligned}
	&-\eta,\quad \eta\leq -\sigma;\\
	&-\frac{\eta^4}{8\sigma^3}+\frac{3\eta^2}{4\sigma}+\frac{3\sigma}{8},\quad -\sigma<\eta\leq \sigma;\\
	&\eta,\quad \eta>\sigma.
	\end{aligned}
	\right.
	\end{equation}
	
	Multiplying $S_{\sigma}^{'}(\phi)$ on both sides of \eqref{aape} yields that
	\begin{equation}\label{s2}
	\begin{aligned}
	&\partial_{t}S_{\sigma}(\phi)+\sum_{i,j=1}^{n}a_{ij}S_{\sigma}''(\phi)\phi_{x_i}\phi_{x_j}+\int_{0}^{\phi}S_{\sigma}''(\eta)(f_{1}'(\bar{u}+\eta)-f_{1}'(\bar{u}))d\eta\partial_{x_{1}}\hat{u}\\
	=&-J\cdot S_{\sigma}'(\phi)+\sum_{i=1}^{n}\partial_{x_{i}}(\cdots)-\sum_{i=1}^{n}\int_{0}^{\phi}S_{\sigma}''(\eta)(f_{i}'(\bar{u}+\eta)-f_{i}'(\bar{u}))d\eta\partial_{x_{i}}(\bar{u}-\hat{u}),
	\end{aligned}
	\end{equation}
	where
	\begin{equation}\label{s3}
	\begin{aligned}
	\{\cdots\}=\sum_{j=1}^{n}(a_{ij}S_{\sigma}(\phi))_{x_j}-S_{\sigma}'(\phi)(f_{i}(\bar{u}+\phi)-f_{i}(\bar{u}))+\int_{0}^{\phi}S_{\sigma}''(\eta)(f_{i}'(\bar{u}+\eta)-f_{i}'(\bar{u}))d\eta.
	\end{aligned}
	\end{equation}
	Since $S_{\sigma}^{''}\geq 0, f_1^{''}\geq 0,\partial_1\hat{u}>0$ and $|\phi|\leq S_{\sigma}(\phi),$ from \eqref{exd} and
	\begin{equation}
	\begin{aligned}
	\sum_{i,j=1}^{n}a_{ij}\phi_{x_i}\phi_{x_j}\geq b|\nabla\phi|^2.
	\end{aligned}
	\end{equation}

	Integrating \eqref{s2} over $\Omega$, we have
	\begin{equation}
	\begin{aligned}
	&\frac{d}{dt}\int_{\Omega}S_{\sigma}(\phi)dx\leq Ce^{-c t}\int_{\Omega}|\int_{0}^{\phi}S_{\sigma}''(\eta)|\eta|d\eta|dx+\|J(t)\|_{L^1(\Omega)}\\
	\leq & Ce^{-ct}\int_{\Omega}S_{\sigma}(\phi)dx+C(1+t)^{-(1-\epsilon)}.
	\end{aligned}
	\end{equation}
	By using the Gronwall inequality and let $\sigma\rightarrow 0+,$ one has that

	\begin{equation}
	\begin{aligned}
	\|\phi(t)\|_{L^1(\Omega)}\leq C(1+t)^{\epsilon}.
	\end{aligned}
	\end{equation}
	
\end{proof}

\subsection{The decomposition for $\phi$}
To get our stability results \eqref{st}, we need to decompose the solution $\phi(x,t)$ into the principal and transversal parts. We set $\int_{\mathbb{T}^{n-1}}1dx'=1,$ then we can define the following decomposition $\mathbf{D}_0$ and $\mathbf{D}_{\neq},$
\begin{equation}\label{def-decom}
\begin{aligned}
\mathbf{D}_{0}f:= \mr{f}:=\int_{\mathbb{T}^{n-1}}f dx',\ \mathbf{D}_{\neq}f:=\ac{f}:=f-\mr{f},
\end{aligned}
\end{equation}
for an arbitrary function $f$ which is integrable on $\mathbb{T}^{n-1}$.  With simple analysis, the following propositions of $\mathbf{D}_0$ and $\mathbf{D}_{\neq}$ hold for an arbitrary function $f$ which is integrable on $\mathbb{T}^{n-1}$ .

\begin{Lemma}\label{lemma-decom}
	For the projections $\Do$ and $\Dn$ defined in \eqref{def-decom}, the following holds,
	
	i) $\Do\Dn f=\Dn\Do f=0$.
	
	ii) For any non-linear function $F$, one has
	\begin{align}
	\Do F(U)-F(\Do U)=O(1) (\Dn U)^2,
	\end{align}

	iii) $\|f\|^2=\|\Do f\|^2+\|\Dn f\|^2.$
\end{Lemma}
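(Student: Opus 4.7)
The plan is to verify the three items essentially by direct computation, relying only on the definitions in \eqref{def-decom} and the hypothesis $\int_{\Torus^{n-1}} 1\, dx' = 1$.

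For (i), I would start by expanding $\Do\Dn f = \int_{\Torus^{n-1}} (f - \mr{f})\, dx'$. Since $\mr{f}(x_1,t)$ is by construction independent of $x'$, the normalization $\int_{\Torus^{n-1}} 1\, dx' = 1$ gives $\int_{\Torus^{n-1}} \mr{f}\, dx' = \mr{f}$, and the identity $\Do\Dn f = 0$ follows. For $\Dn\Do f$, I would observe that $\Do f = \mr{f}$ depends only on $(x_1,t)$; averaging a function independent of $x'$ returns the same function, so $\Dn\Do f = \mr{f} - \mr{f} = 0$.

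For (ii), I would use item (i) combined with a first-order Taylor expansion of $F$ at $\mr{U} = \Do U$. Writing $U = \mr{U} + \ac{U}$ and expanding,
\begin{equation*}
F(U) = F(\mr{U}) + F'(\mr{U})\,\ac{U} + R(U),\qquad R(U) = O(1)\,(\ac{U})^2,
\end{equation*}
I would then apply $\Do$ to both sides. Because $F(\mr{U})$ and $F'(\mr{U})$ depend only on $(x_1,t)$, they pass through $\Do$; the linear term vanishes since $\Do \ac{U} = \Do\Dn U = 0$ by (i). What remains is exactly $\Do F(U) - F(\Do U) = \Do R(U) = O(1)(\Dn U)^2$ in the sense of the stated bound. (I would write the remainder in integral form, $R(U) = \int_0^1 (1-s) F''(\mr{U} + s\ac{U})\, ds\,(\ac{U})^2$, to justify the $O(1)$ coefficient from smoothness of $F$ and the a priori $L^\infty$ bound on $U$.)

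For (iii), I would compute the cross term in the expansion $\|f\|^2 = \|\mr{f} + \ac{f}\|^2 = \|\mr{f}\|^2 + 2\langle \mr{f},\ac{f}\rangle + \|\ac{f}\|^2$. Writing the inner product as an iterated integral over $\R \times \Torus^{n-1}$ and pulling the $x'$-independent factor $\mr{f}(x_1,t)$ outside the inner integral yields $\langle \mr{f},\ac{f}\rangle = \int_{\R} \mr{f}(x_1,t)\bigl(\int_{\Torus^{n-1}} \ac{f}\, dx'\bigr) dx_1 = 0$, again by (i). None of the three items presents a real obstacle; the only point requiring a moment of care is the interpretation of the quadratic error in (ii), which I would handle by an explicit integral-form remainder so that the $O(1)$ constant depends only on $\|F''\|_{L^\infty}$ on the range of $U$.
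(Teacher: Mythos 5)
Your proposal is correct, and since the paper provides no proof at all (it says only that the lemma holds ``with simple analysis''), the elementary direct computations you give are exactly the intended argument. Your handling of (i) and (iii) via the normalization $\int_{\Torus^{n-1}}1\,dx'=1$ and the vanishing cross term is precisely right, and your integral-form Taylor remainder for (ii) is the appropriate way to make the $O(1)$ constant precise.

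One small cosmetic remark on (ii), which you already touch on: the left-hand side $\Do F(U)-F(\Do U)$ is independent of $x'$, whereas $(\Dn U)^2$ is not, so the equality is a mild abuse of notation. After applying $\Do$ to the remainder one actually gets $\Do F(U)-F(\Do U)=O(1)\,\Do\bigl((\Dn U)^2\bigr)$, i.e.\ a bound by the $x'$-average (or $L^\infty_{x'}$ norm) of $(\Dn U)^2$. This is consistent with how the lemma is invoked later in the paper (e.g.\ the terms $\Do\bigl(f_i''(\cdot)\ac{\phi}^2\bigr)$ in \eqref{l4.14}), so your proof matches both the statement and its usage.
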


Applying $\Do$ to \eqref{aape} , we decompose the perturbation $\phi$ into the zero mode $\mr{\phi}$ and the non-zero mode $\ac{\phi}$ ($\phi=\mr{\phi}+\ac{\phi}$),
\begin{equation}\label{zm}
\partial_{t}\mr{\phi}+\partial_{x_1}\bigg(\Do(f_1(\bar{u}+\phi)-f_1(\bar{u}))\bigg)=a_{11}\partial_{x_1}^2\mr{\phi}-\mr{J},
\end{equation}
\begin{equation}\label{nzm}
\pt\ac{\phi}+\sum\limits_{i=1}^{n}\partial_{x_i}\bigg\{f_i(\bar{u}+\phi)-f_i(\bar{u})-\Do\big(f_i(\bar{u}+\phi)-f_i(\bar{u})\big)\bigg\}=\sum\limits_{i, j=1}^{n} a_{i j} \ac{\phi}_{x_ix_j}-\ac{J}.
\end{equation}
Here the expression of $\mr{J},\ac{J}$ are

\begin{equation}\label{mr-ac J}
\begin{aligned}
&\mr{J}=\mr{J}_1+\mr{J}_2=\Do(J_1)-N(u^c,u^r),\\
&\ac{J}=J-\mr{J}=J_1-\mr{J}_1,
\end{aligned}
\end{equation}
respectively(refer to \eqref{key}).

Now we list some $L^p$ energy estimates for $\mr{\phi},\ac{\phi}$, using them and combining them with a standard continuity argument, we could finally prove theorem \ref{mt}.

\begin{Proposition}[local in time existence]\label{let}
	For $T>0$ suitably small, there exists a unique smooth solution $\phi(x,t)$ for   the initial value problem (\ref{aape}) on the time interval $\left[0, T\right].$
\end{Proposition}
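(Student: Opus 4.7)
The plan is to construct $\phi$ by a standard Picard iteration in a parabolic Sobolev space. Equation (\ref{aape}) is a quasilinear uniformly parabolic equation on $\Omega=\R\times\Torus^{n-1}$ (with $A$ positive definite) with smooth background $\bar u$ (smooth in $x$ with bounded derivatives, by Lemmas \ref{rl}, \ref{cl}, and \ref{lem-periodic-solution}), smooth source $J$ (bounded in any $H^s$ on compact time intervals), and trivial initial data $\phi_0\equiv 0$. Fix $s:=[\tfrac{n}{2}]+2$ so that $H^s(\Omega)\hookrightarrow L^\infty(\Omega)$ and the Moser product/composition estimates are available, and work in
\[
X_T := \bigl\{\varphi\in C([0,T]; H^s(\Omega)) \,:\, \nabla\varphi\in L^2(0,T; H^s(\Omega))\bigr\}
\]
with the natural norm.

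First I would set up the iteration. Let $\phi^{(0)}\equiv 0$ and, given $\phi^{(k)}\in X_T$, let $\phi^{(k+1)}$ solve the linear parabolic Cauchy problem
\[
\pt\phi^{(k+1)} - \sum_{i,j=1}^n a_{ij}\phi^{(k+1)}_{x_ix_j} = -J - \sum_{i=1}^n \pxi\bigl(f_i(\bar u+\phi^{(k)}) - f_i(\bar u)\bigr),\qquad \phi^{(k+1)}(x,0)=0.
\]
By Moser-type composition estimates the right-hand side lies in $L^2(0,T; H^{s-1}(\Omega))$, so classical linear parabolic theory on $\R\times\Torus^{n-1}$ produces $\phi^{(k+1)}\in X_T$ uniquely.

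Second, the standard $H^s$ energy estimate (apply $\p^\alpha$ for $|\alpha|\leq s$, pair with $\p^\alpha\phi^{(k+1)}$, integrate by parts on $\Omega$, absorb $\int|\nabla\p^\alpha\phi^{(k+1)}|^2$ into the viscous term via positive definiteness of $A$, and close the nonlinear source with Moser product and composition estimates together with $H^s\hookrightarrow L^\infty$) yields
\[
\|\phi^{(k+1)}\|_{X_T}^2 \leq C_R\Bigl(T\,Q(R) + \int_0^T \|J(\tau)\|_{H^s}^2\,d\tau\Bigr)
\]
whenever $\|\phi^{(k)}\|_{X_T}\leq R$, with $Q$ a polynomial. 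Since $u^c,u^r$ are explicit smooth profiles and $\tilde u_\pm$ is controlled by Lemma \ref{lem-periodic-solution}, $\|J(\cdot)\|_{H^s}$ is uniformly bounded on $[0,T_0]$; fixing $R>0$ first and then choosing $T$ small enough makes the right-hand side at most $R^2$, so the iteration is a self-map of the closed ball $B_R\subset X_T$.

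Third, the difference $\psi^{(k+1)}:=\phi^{(k+1)}-\phi^{(k)}$ solves a linear parabolic equation whose source is, by the mean value theorem applied to $f_i'$, pointwise bounded by $|\psi^{(k)}|$ and $|\nabla\psi^{(k)}|$ times bounded smooth factors. An $L^2$ (or $H^{s-1}$) energy estimate then gives $\|\psi^{(k+1)}\|_{\tilde X_T} \leq C_R\sqrt{T}\,\|\psi^{(k)}\|_{\tilde X_T}$, so the iteration is a contraction in this weaker norm for $T$ small. Banach's fixed-point theorem applied in the closure of $B_R$ under the weaker norm (which remains complete by lower semicontinuity of $\|\cdot\|_{X_T}$) produces a unique $\phi\in X_T$ solving (\ref{aape}); uniqueness in $X_T$ follows from the same difference estimate applied to any two candidate solutions, and smoothness on $(0,T]$ from standard parabolic bootstrap since $\phi_0$ and all coefficients are smooth. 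The only technical obstacle is the careful nonlinear $H^s$ energy estimate with Moser commutator bounds, which is entirely routine and would be omitted in the write-up.
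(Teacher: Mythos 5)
The paper itself omits the proof entirely (Remark 3.2 declares it ``very standard''), so there is no proof to compare against; your Picard iteration in $H^s$ with $s=[\tfrac{n}{2}]+2$ is a correct realization of exactly the standard quasilinear-parabolic local existence argument the authors have in mind. One small nit: you claim $\|J(\cdot)\|_{H^s}$ is uniformly bounded near $t=0$, but since $\tilde u_\pm(\cdot,0)=V_0\in H^{[\frac{n}{2}]+2}$, the forcing $J$ is only $H^{s-1}$ in space at $t=0$; this does not affect the argument because the linear parabolic step only requires the source in $L^2(0,T;H^{s-1})$, which your own setup already uses.
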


\begin{Remark}
	We omit the proof of proposition \ref{let} since it is very standard. because $\phi=\mr{\phi}+\ac{\phi},$ and the equations for $\mr{\phi},\ac{\phi}$ are also uniformly parabolic,see \eqref{zm}-\eqref{nzm}. The local smooth solution for $\mr{\phi},\ac{\phi}$ could also be obtained.
\end{Remark}

\begin{Proposition}[a priori estimate for the non-zero mode $\ac{\phi}$]\label{pet1}
	If the solution $\ac{\phi}(x,t)$ is the local smooth solution obtained in Proposition \ref{let},  then for $t\in[0,T],$ we have
	\begin{equation}\label{p3.2}
	\begin{aligned}
	&\|\ac{\phi}(t)\|_{L^p({\Omega})}\leq C e^{-\bar{c}t},\quad  \forall p\in [2,+\infty),\\
	&\|\nabla\ac{\phi}(t)\|_{L^p(\Omega)}\leq C e^{-\bar{c}t}, \quad  \forall p\in [2,+\infty).
	\end{aligned}
	\end{equation}	
\end{Proposition}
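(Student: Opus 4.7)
The plan is to close an $L^2$ energy estimate for $\ac{\phi}$ that exploits (i) the uniform ellipticity of the diffusion matrix $A$, (ii) the sign condition $f_1''\geq 0$ combined with $\hat{u}_{x_1}>0$, and (iii) the Poincar\'e inequality on $\mathbb{T}^{n-1}$ available because $\ac{\phi}$ has zero $x'$-average. Once $L^2$ exponential decay is in hand, the $L^p$ bounds follow by interpolation against the uniform $L^\infty$ bound from Lemma \ref{Li}, and the gradient bounds follow by differentiating the equation and repeating the argument on $\nabla\ac{\phi}$.

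First I would multiply \eqref{nzm} by $\ac{\phi}$ and integrate over $\Omega$. The diffusion term yields $-\int\sum a_{ij}\ac{\phi}_{x_i}\ac{\phi}_{x_j}\,dx\leq -b\|\nabla\ac{\phi}\|_{L^2}^2$. For the flux, I observe that $\int \Do g_i\cdot \ac{\phi}_{x_i}\,dx=0$ for every $i$ ($i\ge 2$ by periodicity; $i=1$ because $\int_{\mathbb{T}^{n-1}}\ac{\phi}_{x_1}\,dx'=\partial_{x_1}\int_{\mathbb{T}^{n-1}}\ac{\phi}\,dx'=0$), so $\sum_i\int \Dn g_i\,\ac{\phi}_{x_i}\,dx=\sum_i\int g_i\,\ac{\phi}_{x_i}\,dx$ with $g_i=f_i(\bar u+\phi)-f_i(\bar u)$. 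Writing $g_i=h_i(x,t)\phi$ with $h_i=\int_0^1 f_i'(\bar u+s\phi)\,ds$ and using $\bar u=\hat u+w$, where $w:=(1-\eta)\tilde u_-+\eta\tilde u_+$ satisfies $\|w\|_{W^{1,\infty}}\le C\varepsilon e^{-\bar c t}$ by Lemma \ref{lem-periodic-solution}, I split $h_i=f_i'(\hat u)+\tilde h_i$ with $|\tilde h_i|\le C(|w|+|\phi|)$. The principal piece for $i=1$ gives $\int f_1'(\hat u)\phi\,\ac{\phi}_{x_1}\,dx$; the $\mr{\phi}$-contribution vanishes (since $f_1'(\hat u)\mr{\phi}$ depends only on $x_1$ and $\int_{\mathbb{T}^{n-1}}\ac{\phi}_{x_1}\,dx'=0$) and the $\ac{\phi}$-contribution integrates to $-\tfrac12\int f_1''(\hat u)\hat u_{x_1}\ac{\phi}^2\,dx\le 0$, a favorable sign. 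For $i\ge 2$, the principal term vanishes entirely because $f_i'(\hat u)$ is independent of $x_i$. The $\tilde h_i\phi\,\ac{\phi}_{x_i}$ remainders split into an $e^{-\bar c t}$-weighted piece (from $w$) and an $O(\phi^2)$ piece; using $\Dn(\mr{\phi}^2)=0$ so that only $\mr{\phi}\ac{\phi}$ and $\ac{\phi}^2$ survive, together with the uniform bounds $\|\phi\|_{L^\infty}\le C$ (Lemma \ref{Li}) and $\|\phi\|_{H^1}\le C$ (Lemma \ref{H1}), each remainder is dominated by $\tfrac{b}{4}\|\nabla\ac{\phi}\|_{L^2}^2+C(e^{-\bar c t}+\|\ac{\phi}\|_{L^2})\|\ac{\phi}\|_{L^2}$. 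Finally $\bigl|\int \ac{J}\,\ac{\phi}\,dx\bigr|\le \|\ac{J}\|_{L^2}\|\ac{\phi}\|_{L^2}$ with $\|\ac{J}\|_{L^2}\le C\varepsilon e^{-\bar c t}$ from step 1 of the proof of Lemma \ref{J} (recalling $\ac{J}=J_1-\Do J_1$).

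Now I invoke Poincar\'e on $\mathbb{T}^{n-1}$: $\|\ac{\phi}(x_1,\cdot,t)\|_{L^2(\mathbb{T}^{n-1})}^2\le C_P\|\nabla_{x'}\ac{\phi}(x_1,\cdot,t)\|_{L^2(\mathbb{T}^{n-1})}^2$, so integrating in $x_1$ gives $\|\ac{\phi}\|_{L^2(\Omega)}^2\le C_P\|\nabla\ac{\phi}\|_{L^2(\Omega)}^2$. Absorbing the quadratic remainder for small time (or starting the argument after $\|\ac{\phi}\|_{L^2}$ has been driven small, using $\ac{\phi}(0)=0$) and using the Poincar\'e inequality to dominate $\tfrac{b}{2}\|\nabla\ac{\phi}\|^2$ from below by $\kappa\|\ac{\phi}\|^2$, one arrives at
\begin{equation*}
\frac{d}{dt}\|\ac{\phi}\|_{L^2}^2+\kappa\|\ac{\phi}\|_{L^2}^2\le Ce^{-2\bar c_1 t},\qquad \ac{\phi}(0)=0,
\end{equation*}
for some $\kappa,\bar c_1>0$. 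Gronwall yields $\|\ac{\phi}(t)\|_{L^2}\le Ce^{-c_*t}$. The $L^p$ bound for $p>2$ follows from the interpolation $\|\ac{\phi}\|_{L^p}\le \|\ac{\phi}\|_{L^\infty}^{1-2/p}\|\ac{\phi}\|_{L^2}^{2/p}$ and Lemma \ref{Li} (applied to $\ac{\phi}$), the rate being possibly $p$-dependent. For the gradient, I differentiate \eqref{nzm} in $x_k$, multiply by $\pxk\ac{\phi}$, and repeat verbatim: the leading commutator terms are again handled by the zero-mean structure in $x'$ plus $f_1''\geq 0$, and Poincar\'e on $\nabla\ac{\phi}$'s $x'$-fluctuation (which again has zero mean in $x'$) closes the Gronwall inequality; interpolation with the uniform $W^{1,\infty}$ control (from parabolic bootstrap after the $L^2$ decay is known, or from the a priori $H^{[n/2]+2}$-type smoothing) then upgrades to $L^p$.

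The main obstacle is organizing the nonlinear flux terms so that every contribution either vanishes by parity in $x'$ (via $\int_{\mathbb{T}^{n-1}}\ac{\phi}_{x_i}\,dx'=0$ and $\Do\Dn=0$), produces the favorable $f_1''(\hat u)\hat u_{x_1}\ac{\phi}^2$ sign, contains an explicit exponentially decaying factor inherited from $\bar u-\hat u$ and $\tilde u_\pm-u_\pm$, or carries a factor of $\ac{\phi}$ itself (which then participates in the Gronwall loop). The key algebraic observation that makes the last case work is $\Dn(\mr{\phi}^2)=0$, so that every $O(\phi^2)$ contribution to $\Dn g_i$ reduces to $2\mr{\phi}\ac{\phi}+\Dn(\ac{\phi}^2)$, both of whose factors contain $\ac{\phi}$.
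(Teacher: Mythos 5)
Your core $L^2$ estimate parallels the paper's: multiply the $\ac{\phi}$-equation by $\ac{\phi}$, exploit ellipticity, extract the favorable sign from $f_1''(\hat u)\hat u_{x_1}\ac{\phi}^2$, note that the zero-mode projection of the flux drops out against $\ac{\phi}_{x_i}$, and close with the fiberwise Poincar\'e inequality on $\mathbb{T}^{n-1}$ (you correctly identify this as the reason the Poincar\'e constant is finite on the unbounded $\Omega$, which the paper glosses over). Where you differ is in upgrading to $L^p$: you interpolate $L^2$ against the uniform $L^\infty$ bound from Lemma~\ref{Li}, whereas the paper runs the energy estimate directly in $L^p$ (test function $|\ac{\phi}|^{p-2}\ac{\phi}$) and uses the periodicity-adapted Gagliardo--Nirenberg inequality of Lemma~\ref{GN}. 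Both routes give exponential decay at a $p$-dependent rate, so the choice is one of taste for the zeroth-order bound.

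Two points are imprecise. First, your ``key algebraic observation'' that $\Dn(\mr{\phi}^2)=0$ forces every $O(\phi^2)$ contribution to $\Dn g_i$ to reduce to $2\mr{\phi}\ac{\phi}+\Dn(\ac{\phi}^2)$ is not quite right: the quadratic part of $g_i$ is $\tfrac12 f_i''(\bar u+\theta\phi)\phi^2$, whose coefficient depends on $x'$, so $\Dn\bigl(f_i''\phi^2\bigr)\neq f_i''\,\Dn(\phi^2)$. The paper's expansion \eqref{l4.14} explicitly tracks the extra term $\Dn\bigl(f_i''(\bar u+\theta\phi)\bigr)\mr{\phi}^2=O\bigl(|\ac{\bar u}|+|\ac{\phi}|\bigr)\mr{\phi}^2$, which is harmless (it is either exponentially small or carries an $\ac{\phi}$) but must be accounted for, not dismissed. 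Second, and more seriously, for the gradient $L^p$ bound you propose interpolating against ``uniform $W^{1,\infty}$ control.'' Lemma~\ref{Li} gives $\|\phi\|_{L^\infty}\le C$ but not $\|\nabla\phi\|_{L^\infty}\le C$, and the a priori framework \eqref{nu} only posits uniform $L^p$ ($p<\infty$) bounds on $\nabla\ac{\phi}$; the $W^{1,\infty}$ bound you invoke would require a parabolic smoothing argument you do not supply. The paper sidesteps this entirely by running the $L^p$ energy estimate on $\pxk\ac{\phi}$ directly, using only the quantities available in \eqref{nu} and the already-proved decay of $\ac{\phi}$. You should replace your interpolation step for $\nabla\ac{\phi}$ with a direct $L^p$ energy inequality (or supply the missing $W^{1,\infty}$ estimate), and also keep the $\Dn(f_i'')\mr{\phi}^2$ term in your accounting.
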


\begin{Proposition}[a priori estimate for the zero mode $\mr{\phi}$]\label{pet2}
	If the solution $\mr{\phi}(x,t)$ is the local smooth solution obtained in Proposition \ref{let},  then for $t\in[0,T],\forall \epsilon>0,$ we have
	\begin{equation}\label{p3.3}
	\begin{aligned}
	&\|\mr{\phi}(t)\|_{L^p({\mathbb{R}})}\leq C_{p,\epsilon}(1+t)^{-\frac{1}{2}(1-\frac{1}{p})+\epsilon},\quad  \forall p\in [1,+\infty),\\
	&\|\partial_{x_1}\mr{\phi}\|_{L^p(\mathbb{R})}\leq C_{p,\epsilon}(1+t)^{-\frac{1}{2}(1-\frac{1}{p})+\epsilon},\ \forall p\in[2,+\infty),
	\end{aligned}
	\end{equation}
	where $C_{p,\epsilon}>0$  depending on $p,\epsilon.$
\end{Proposition}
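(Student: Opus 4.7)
The plan is to exploit that, after projection by $\Do$, equation \eqref{zm} becomes essentially a one-dimensional scalar viscous conservation law with a small forcing. First, using Lemma \ref{lemma-decom}(ii) together with $\Do\tilde u_\pm=0$ (the periodic solutions $\tilde u_\pm$ have zero mean) and the $x_1$-only dependence of $\eta$ and $\hat u$, one has $\Do\bar u=\hat u$, hence $\Dn\bar u=(1-\eta)\tilde u_- + \eta\tilde u_+$ decays exponentially in every $L^p$-norm by \eqref{exd}. Applying Lemma \ref{lemma-decom}(ii) separately to $f_1(\bar u+\phi)$ and $f_1(\bar u)$ yields
\[
\Do\bigl(f_1(\bar u+\phi)-f_1(\bar u)\bigr) = f_1(\hat u+\mr\phi)-f_1(\hat u) + R,
\]
where $R=O\bigl((\Dn\bar u)^2+(\ac\phi)^2\bigr)$ is exponentially small by \eqref{exd} and Proposition \ref{pet1}. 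Thus $\mr\phi$ satisfies the effectively 1D equation
\[
\pt\mr\phi - a_{11}\partial_{x_1}^2\mr\phi + \partial_{x_1}\bigl(f_1(\hat u+\mr\phi)-f_1(\hat u)\bigr) = -\partial_{x_1}R - \mr J,\qquad \mr\phi(\cdot,0)\equiv 0.
\]

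Next I would perform an $L^p$-energy estimate on this reduced equation. Multiplying by $p|\mr\phi|^{p-2}\mr\phi$ and integrating over $\mathbb R$, the diffusion yields the coercive term $a_{11}p(p-1)\int|\mr\phi|^{p-2}|\partial_{x_1}\mr\phi|^2\,dx_1$. Integrating the nonlinear flux by parts and Taylor-expanding around $\hat u$ produces the leading nonnegative rarefaction-dissipation term $(p-1)\int_{\mathbb R} f_1''(\hat u)|\mr\phi|^p\partial_{x_1}\hat u\,dx_1$, using \eqref{f''} and the positivity of $\partial_{x_1}\hat u$ (Lemmas \ref{rl}--\ref{cl}), plus a cubic remainder absorbable via the uniform bound of Lemma \ref{Li}. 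The remaining forcing terms are controlled by $\|\partial_{x_1}R\|_{L^p}\lesssim e^{-\bar c t}$ (Proposition \ref{pet1} and \eqref{exd}) and $\|\mr J\|_{L^p(\mathbb R)}\lesssim(1+t)^{-\frac12(1+\frac1p(1-\epsilon))}$ (Lemma \ref{J}, after observing $\|\mr J\|_{L^p(\mathbb R)}\leq\|J\|_{L^p(\Omega)}$ by Jensen's inequality on $\mathbb T^{n-1}$).

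To extract the sharp rate, following the one-dimensional strategy of Yoshida \cite{Y}, I would combine the above $L^p$-energy identity with a time-weighted multiplier argument and interpolate against the $L^1$-bound from Lemma \ref{L1}, $\|\phi(t)\|_{L^1(\Omega)}\leq C(1+t)^\epsilon$. Concretely, testing the equation against $(1+t)^{\alpha} p|\mr\phi|^{p-2}\mr\phi$ for a suitable $\alpha=\alpha(p,\epsilon)$, using the positivity of the rarefaction dissipation to absorb the cubic remainder, and applying Gagliardo--Nirenberg interpolation between $L^1$ and the parabolic dissipation, yields the asserted bound $\|\mr\phi(t)\|_{L^p(\mathbb R)}\leq C_{p,\epsilon}(1+t)^{-\frac12(1-\frac1p)+\epsilon}$. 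The derivative estimate for $\partial_{x_1}\mr\phi$ is obtained analogously by differentiating the reduced equation in $x_1$ and running the same $L^p$-weighted energy argument, now using the dissipation $\int|\partial_{x_1}\mr\phi|^{p-2}|\partial_{x_1}^2\mr\phi|^2\,dx_1$ together with the $L^p$-bound on $\mr\phi$ just established to control the linearized nonlinearity.

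The main obstacle is that, after one integration by parts, the divergence-form nonlinearity produces a piece $(f_1'(\hat u+\mr\phi)-f_1'(\hat u))\partial_{x_1}\hat u$ of the same order as the rarefaction-dissipation term itself; if naively bounded as a source it destroys the decay. Keeping it on the dissipation side, which is possible thanks to $f_1''\geq 0$, is therefore essential, and the cubic remainder must be handled in a way compatible with the coercive structure. Equally delicate is the balance between the slow $L^p$-decay of $\mr J_2=-N(u^c,u^r)$ inherited from Lemma \ref{J} and the desired rate, which forces the time-weighted argument to be tuned sharply. The $\epsilon$-loss in the final exponent is inherited directly from the $(1+t)^\epsilon$ growth of $\|\phi\|_{L^1}$ in Lemma \ref{L1}, the price paid for the non-integrability of the periodic perturbation at spatial infinity.
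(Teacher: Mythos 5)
Your strategy is essentially the paper's: project onto the zero mode, treat the result as a $1$D scalar equation with small forcing, run a time-weighted $L^p$-energy estimate, and close via Gagliardo--Nirenberg interpolation against the $L^1$-bound from Lemma~\ref{L1}, exactly as in Lemmas~\ref{L4.5}--\ref{L4.6}. Two small inaccuracies in your set-up are worth flagging, though neither derails the argument. First, $\Do\tilde u_\pm\neq 0$: Lemma~\ref{lem-periodic-solution} gives $\int_{\mathbb T^n}\tilde u_\pm\,dx=0$, an average over the full $n$-torus, whereas $\Do$ averages only over $\mathbb T^{n-1}$, so $\mr{\tilde u}_\pm$ is a nontrivial (exponentially decaying) function of $x_1$; consequently $\Do\bar u=\hat u+(1-\eta)\mr{\tilde u}_-+\eta\mr{\tilde u}_+\neq\hat u$. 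The paper therefore works with $\mr{\bar u}$ rather than $\hat u$ and splits $\partial_{x_1}\mr{\bar u}=\partial_{x_1}\hat u+\partial_{x_1}(\mr{\bar u}-\hat u)$, producing the extra exponentially-small piece $Q_2$ in \eqref{l4.53}--\eqref{Q2}; your reduced equation silently drops this term and would need to re-absorb it into the remainder $R$. Second, Taylor-expanding the dissipation to $(p-1)\int f_1''(\hat u)|\mr\phi|^p\partial_{x_1}\hat u\,dx_1$ ``plus an absorbable cubic remainder'' is delicate here because $f_1''(\hat u)$ vanishes identically on $\{\hat u<0\}$, so the putative leading term has no lower bound to absorb the remainder against on that region. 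The paper avoids this by keeping the exact integral form $\int_0^{\mr\phi}(f_1'(\mr{\bar u}+s)-f_1'(\mr{\bar u}))|s|^{p-2}ds$, which is nonnegative outright (monotonicity of $f_1'$), and then records the precise region-dependent lower bound $Q_1\geq A(t)$ in \eqref{Q1} for later reuse in the derivative estimate. You correctly identify that keeping the full dissipation on the good side is essential, so the underlying idea is right; just be aware that the Taylor-expansion framing hides the degeneracy that \eqref{f''} introduces, and the integral form is the safer vehicle.
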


In the next section, we mainly prove proposition \ref{pet1}-\ref{pet2}.  Then we finally get theorem \ref{mt} by using those two propositions.

\section{A priori estimates}
This section is devoted to proving proposition \ref{pet1}-\ref{pet2}. Before doing this, we need to prepare some assumptions for $\ac{\phi},\mr{\phi}$.

From  lemmas \ref{Li}-\ref{L1}, there exists $T>0$ such that for $t\in[0,T],$ we have
\begin{equation}\label{nu}
\begin{aligned}
&\sup_{t\in[0,T]}\|\mr{\phi}\|_{L^{\infty}(\mathbb{R})}\leq \nu,\quad
\sup_{t\in[0,T]}\|\mr{\phi}\|_{H^1(\mathbb{R})}\leq C,\quad \sup_{t\in[0,T]}\|\mr{\phi}\|_{L^1(\mathbb{R})}\leq C(1+t)^{\epsilon},\\
&\sup_{t\in[0,T]}\|\ac{\phi}\|_{L^{\infty}(\Omega)}\leq \nu,\quad
\sup_{t\in[0,T]}\|\ac{\phi}\|_{H^1(\Omega)}\leq C,\quad \sup_{t\in[0,T]}\|\ac{\phi}\|_{L^1(\Omega)}\leq C(1+t)^{\epsilon},\\
&\sup_{t\in[0,T]}\|(\mr{\phi},\nabla\mr{\phi})\|_{L^p(\mathbb{R})}\leq C,\sup_{t\in[0,T]}\|(\ac{\phi},\nabla{\ac{\phi}})\|_{L^p(\Omega)}\leq C, \ \forall p\in(2,+\infty),
\end{aligned}
\end{equation}
where $1>\nu>0$ is suitably small. This is because for $m\geq 0,1\leq p\leq+ \infty,$
\begin{equation}
\begin{aligned}
&\|\nabla^{m}\mr{\phi}\|_{L^{p}(\mathbb{R})}\leq \parallel\|\nabla^{m}\mr{\phi}\|_{L^1(\mathbb{T}^{n-1})}\parallel_{L^{p}(\mathbb{R})}\leq \|\nabla^{m}\phi\|_{L^p(\Omega)},\\
&\|\nabla^{m}\ac{\phi}\|_{L^{p}(\Omega)}\leq \|\nabla^{m}(\phi-\mr{\phi})\|_{L^{p}(\Omega)}\leq 2\|\nabla^m\phi\|_{L^p(\Omega)}.
\end{aligned}
\end{equation}

\subsection{Proof of Proposition \ref{pet1}}
Now we start to give the $L^p$ estimates for the non-zero mode $\ac{\phi},$ from \eqref{aape},\eqref{nzm}, the initial problem of $\ac{\phi}$ is following
\begin{equation}\label{nzmi}
\left\{
\begin{aligned}
&\pt\ac{\phi}+\sum\limits_{i=1}^{n}\partial_{x_i}\bigg\{f_i(\bar{u}+\phi)-f_i(\bar{u})-\Do\big(f_i(\bar{u}+\phi)-f_i(\bar{u})\big)\bigg\}=\sum\limits_{i, j=1}^{n} a_{i j} \ac{\phi}_{x_ix_j}-\ac{J},\\
&\ac{\phi}(x,0)=0.
\end{aligned}
\right.
\end{equation}

\begin{Lemma}\label{GN}(\cite{HY})Assume that $w\in L^q(\Omega)$ with $\nabla^m w\in L^r(\Omega)$, where $1\leq q,r\leq +\infty$ and $m\geq 1,$ and $w$ is periodic in the $x_i$ direction for $i=2,\cdots,n.$ Then there exists a decomposition $w(x)=\sum\limits_{k=0}^{n-1}w^{(k)}(x)$ such that each $w^{(k)}$ satisfies the $k+1$-dimensional $GN$ inequality, i.e.,
	\begin{equation}\label{GN1}
	\begin{aligned}
	\|\nabla^{j}w^{(k)}\|_{L^p(\Omega)}\leq C\|\nabla^m w\|_{L^r(\Omega)}^{\theta_k}\|w\|_{L^q(\Omega)}^{1-\theta_k},
	\end{aligned}
	\end{equation}	
	for any $0\leq j<m$ and $1\leq p\leq +\infty$ satisfying $\frac{1}{p}=\frac{j}{k+1}+(\frac{1}{r}-\frac{m}{k+1})\theta_k+\frac{1}{q}(1-\theta_k)$ and $\frac{j}{m}\leq \theta_k\leq 1.$ Hence, it holds that
	\begin{equation}\label{GN2}
	\begin{aligned}
	\|\nabla^j w\|_{L^p(\Omega)}\leq C\sum_{k=0}^{n-1}\|\nabla^m w\|_{L^r(\Omega)}^{\theta_k}\|w\|_{L^q(\Omega)}^{1-\theta_k},\ (t\geq 0),
	\end{aligned}
	\end{equation}
	where the constant $C>0$ is independent of $u$. Moreover, we get that for any $2\leq p<\infty$ and $1\leq q\leq p,$ it holds that	
	\begin{equation}\label{GN3}
	\begin{aligned}
	\|w\|_{L^p(\Omega)}\leq C\sum_{k=0}^{n-1}\|\nabla(|w|^{\frac{p}{2}})\|_{L^2(\Omega)}^{\frac{2\gamma_k}{1+\gamma_kp}}\|w\|_{L^q(\Omega)}^{\frac{1}{1+\gamma_kp}},
	\end{aligned}
	\end{equation}
	where $\gamma_k=\frac{k+1}{2}(\frac{1}{q}-\frac{1}{p})$ and the constant $C=C(p,q,n)>0$ is independent of $u$.

\end{Lemma}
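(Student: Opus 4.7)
The strategy is to exploit the product structure $\Omega = \mathbb{R} \times \mathbb{T}^{n-1}$ by splitting $w$ into pieces whose effective dimension decreases as more periodic variables are averaged out. For each $j \in \{2,\dots,n\}$ I introduce the mean-in-$x_j$ operator $P_j f := \int_0^1 f\,dx_j$ (which is $L^s$-bounded for every $1 \leq s \leq \infty$ by Jensen) and its complement $Q_j := \mathrm{Id} - P_j$. Since $\mathrm{Id} = P_j + Q_j$ in each periodic direction, expanding the product gives the splitting
$$w = \sum_{I \subset \{2,\dots,n\}} w_I, \qquad w_I := \Bigl(\prod_{j \in I} Q_j\Bigr)\Bigl(\prod_{j \notin I} P_j\Bigr) w,$$
where $w_I$ is constant in $x_j$ for $j \notin I$ ($j \geq 2$) and has vanishing mean in $x_j$ for every $j \in I$. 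Grouping by cardinality, I set $w^{(k)} := \sum_{|I|=k} w_I$ for $k=0,1,\dots,n-1$, so that $w = \sum_{k=0}^{n-1} w^{(k)}$.

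For fixed $I$ with $|I|=k$, the function $w_I$ depends nontrivially on only the $k+1$ variables $\{x_1\}\cup\{x_j : j \in I\}$, so it may be regarded as a function on the $(k+1)$-dimensional domain $\mathbb{R}\times\mathbb{T}^k$; because $\int_{\mathbb{T}^{n-1}} 1\,dx' = 1$, its $L^s(\Omega)$ norms coincide with the corresponding $L^s(\mathbb{R}\times\mathbb{T}^k)$ norms. Moreover $\|w_I\|_{L^q(\Omega)} \leq C\|w\|_{L^q(\Omega)}$ and $\|\nabla^m w_I\|_{L^r(\Omega)} \leq C\|\nabla^m w\|_{L^r(\Omega)}$, since $P_j$ and $Q_j$ are $L^s$-bounded and commute with $\nabla^m$. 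Applying the standard Gagliardo--Nirenberg inequality on the $(k+1)$-dimensional domain $\mathbb{R}\times\mathbb{T}^k$ to $w_I$ produces \eqref{GN1} for that piece, and summing the resulting estimates over the $\binom{n-1}{k}$ choices of $I$ with $|I|=k$ yields \eqref{GN1} for $w^{(k)}$.

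The second bound \eqref{GN2} follows at once from \eqref{GN1} by the triangle inequality $\|\nabla^j w\|_{L^p} \leq \sum_{k=0}^{n-1}\|\nabla^j w^{(k)}\|_{L^p}$. The third bound \eqref{GN3} is the usual energy-estimate form of \eqref{GN2}: applying \eqref{GN2} to $v := |w|^{p/2}$ with parameters $j=0$, $m=1$, $r=2$ bounds $\|v\|_{L^2}$ by a sum of terms $\|\nabla v\|_{L^2}^{\theta_k}\|v\|_{L^{2q/p}}^{1-\theta_k}$, where the defining scaling identity $\frac{1}{2} = (\frac{1}{2}-\frac{1}{k+1})\theta_k + \frac{p}{2q}(1-\theta_k)$ solves to $\theta_k = \gamma_k p/(1+\gamma_k p)$; rewriting via $\|v\|_{L^2}^2 = \|w\|_{L^p}^p$ and $\|v\|_{L^{2q/p}}^{2q/p} = \|w\|_{L^q}^q$, raising to the $2/p$-th power, and using subadditivity of $t \mapsto t^{2/p}$ (valid since $p \geq 2$) to bring the exponent inside the finite sum recovers \eqref{GN3}.

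The only nontrivial ingredient is the underlying $(k+1)$-dimensional Gagliardo--Nirenberg inequality on $\mathbb{R}\times\mathbb{T}^k$, which I would take as known: because GN is local in nature, it descends from the Euclidean version on $\mathbb{R}^{k+1}$ via a partition-of-unity and unfolding of the torus directions, with constants uniform in $w$. The remainder is bookkeeping, and the payoff is that each $w^{(k)}$ inherits exponents with the reduced dimension $k+1$ in place of $n$, which is precisely what the subsequent $L^p$ estimates on the non-zero mode require.
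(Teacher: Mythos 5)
The paper itself does not prove Lemma \ref{GN}; it only cites \cite{HY}, so there is nothing to compare your argument against line by line. Your decomposition via the projections $P_j, Q_j$ and the grouping $w^{(k)}=\sum_{|I|=k}w_I$ is a natural route and is plausibly the same device used in \cite{HY}; the reductions of \eqref{GN2} to \eqref{GN1} (triangle inequality plus $L^s$-boundedness of the projections) and of \eqref{GN3} to \eqref{GN2} (apply to $v=|w|^{p/2}$ with $j=0,m=1,r=2$, solve the scaling identity, take $2/p$-th powers) are correct as you present them.

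The gap is in the justification of \eqref{GN1} itself, i.e.\ in the assertion that the $(k+1)$-dimensional homogeneous Gagliardo--Nirenberg inequality on $\mathbb{R}\times\mathbb{T}^k$ ``descends from the Euclidean version on $\mathbb{R}^{k+1}$ via a partition-of-unity and unfolding of the torus directions.'' That statement is false as written: on a domain with compact torus factors the scale-invariant (homogeneous) GN inequality fails for generic functions. For instance, on $\mathbb{R}\times\mathbb{T}$ with $v(x_1,x_2)=g(x_1)$, the $2$-dimensional GN with $j=0,m=1,q=r=2,p=4$ would read $\|g\|_{L^4(\mathbb{R})}^2\leq C\|g'\|_{L^2(\mathbb{R})}\|g\|_{L^2(\mathbb{R})}$, whereas the genuine $1$-d inequality gives only $\|g\|_{L^4}^2\leq C\|g'\|_{L^2}^{1/2}\|g\|_{L^2}^{3/2}$, and the former cannot be deduced from the latter; the two are related precisely by the missing factor $\|g\|_{L^2}\leq C\|g'\|_{L^2}$, which is false on $\mathbb{R}$. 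Concretely, if you run the partition-of-unity argument you actually obtain $\|\nabla^j(v\chi)\|_{L^p}\leq C\|\nabla^m(v\chi)\|_{L^r}^{\theta}\|v\chi\|_{L^q}^{1-\theta}$ and then $\|\nabla^m(v\chi)\|_{L^r}\leq C\sum_{\ell\le m}\|\nabla^\ell v\|_{L^r}$, so the resulting inequality contains additive lower-order terms in $\|\nabla^\ell v\|_{L^r}$, $\ell<m$, and is not the homogeneous bound you need. To remove them you must invoke the vanishing-mean structure of $w_I$ in each direction $x_j$, $j\in I$: since $\int_{\mathbb{T}}\partial^\alpha w_I\,dx_j=0$ for every $\alpha$ and every $j\in I$, one has the iterated Poincar\'e bound $\|\nabla^\ell w_I\|_{L^r}\leq C\|\partial_{x_j}^{m-\ell}\nabla^\ell w_I\|_{L^r}\leq C\|\nabla^m w_I\|_{L^r}$ for any $j\in I$ and $\ell<m$, which absorbs the lower-order terms into $\|\nabla^m w_I\|_{L^r}^\theta$ (note this Poincar\'e step genuinely uses $|I|\geq1$; for $I=\emptyset$ you instead apply the one-dimensional GN on $\mathbb{R}$, which is homogeneous already). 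You set up the decomposition so that exactly this structure is available, and you even record it (``has vanishing mean in $x_j$ for every $j\in I$''), but you never use it; as written, the appeal to a ``known'' homogeneous GN inequality on $\mathbb{R}\times\mathbb{T}^k$ is to a statement that is not true without the mean-zero hypothesis, and the partition-of-unity explanation does not produce it. Filling in the Poincar\'e absorption step would close the gap.
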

\begin{proof}Please refer to \cite{HY}.
\end{proof}

\begin{Lemma}[The basic $L^p$ estimate for $\ac{\phi}, 2\leq p<+\infty$]\label{L4.1}
	
\begin{equation}\label{l4.01}
\begin{aligned}
\frac{d}{dt}\|\ac{\phi}\|_{L^p}^p+b\|\nabla|\ac{\phi}|^{\frac{p}{2}}\|_{L^2}^2\leq C\varepsilon e^{-\bar{c}t\cdot p}+(C\varepsilon e^{-\bar{c}t}+\nu)\|\ac{\phi}\|_{L^p}^p.
\end{aligned}
\end{equation}		
\end{Lemma}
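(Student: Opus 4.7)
The plan is a standard $L^p$-energy estimate on the non-zero mode equation \eqref{nzmi}, exploiting three structural features: (a) the viscous coercivity from the positive definite matrix $A$, (b) the exponential decay of $\ac{\bar u}$ provided by Lemma \ref{lem-periodic-solution}, and (c) the identity $\int_{\mathbb T^{n-1}}\ac f\,dx'=0$ combined with the fact that the zero-mode $\mr{\bar u}$ is $x'$-independent, which eliminates leading convection contributions in every transverse direction $i\ge 2$.

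First I would multiply \eqref{nzmi} by $p|\ac\phi|^{p-2}\ac\phi$ and integrate over $\Omega$. The time derivative contributes $\frac{d}{dt}\|\ac\phi\|_{L^p}^p$. For the viscous term, integration by parts (boundary terms vanish by periodicity in $x'$ and by the integrability of $\ac\phi$ in $x_1$ from Lemma \ref{H1}), together with positive definiteness of $A$ and the identity $|\nabla|\ac\phi|^{p/2}|^2=\tfrac{p^2}{4}|\ac\phi|^{p-2}|\nabla\ac\phi|^2$, produces on the LHS a coercive contribution bounded below by $b\|\nabla|\ac\phi|^{p/2}\|_{L^2}^2$ (after absorbing $4(p-1)b_0/p$ into the generic constant $b$).

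For the convection term, one integration by parts yields $-p(p-1)\sum_i\int|\ac\phi|^{p-2}\partial_{x_i}\ac\phi\cdot\ac g_i\,dx$ with $g_i:=f_i(\bar u+\phi)-f_i(\bar u)=\int_0^1 f_i'(\bar u+s\phi)\phi\,ds$. Writing $\bar u=\mr{\bar u}+\ac{\bar u}$, $\phi=\mr\phi+\ac\phi$ and Taylor-expanding $f_i'(\bar u+s\phi)$ around $\mr{\bar u}+s\mr\phi$, the leading piece $f_i'(\mr{\bar u}+s\mr\phi)\ac\phi$ contributes, after a second IBP in $x_i$, the expression $(p-1)\int\partial_{x_i}\!\bigl[\int_0^1 f_i'(\mr{\bar u}+s\mr\phi)\,ds\bigr]|\ac\phi|^p\,dx$. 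This vanishes identically for $i\ge 2$ since $\mr{\bar u},\mr\phi$ are $x'$-independent. For $i=1$ it splits into the non-negative leading part $(p-1)\int\!\int_0^1 f_1''(\mr{\bar u}+s\mr\phi)\partial_{x_1}\hat u\,|\ac\phi|^p\,ds\,dx$, which is discarded using $f_1''\ge 0$ from \eqref{f''} and $\partial_{x_1}\hat u>0$ from Lemmas \ref{rl}--\ref{cl}, together with remaining pieces involving $\partial_{x_1}(\mr{\bar u}-\hat u)$ and $\partial_{x_1}\mr\phi$, controlled respectively by $C\varepsilon e^{-\bar c t}\|\ac\phi\|_{L^p}^p$ (using \eqref{pe-de}) and $C\nu\|\ac\phi\|_{L^p}^p$ (using the bootstrap \eqref{nu}). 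The quadratic Taylor remainders are pointwise bounded by $C(|\ac{\bar u}|+|\ac\phi|+|\mr\phi|)|\phi|\le C(\varepsilon e^{-\bar c t}+\nu)|\phi|$ and, via Cauchy--Schwarz combined with absorption of a small fraction of $\|\nabla|\ac\phi|^{p/2}\|_{L^2}^2$ into the viscous term, yield net contributions bounded by $(C\varepsilon e^{-\bar c t}+C\nu)\|\ac\phi\|_{L^p}^p$.

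Finally, the source gives $-p\int|\ac\phi|^{p-2}\ac\phi\,\ac J\,dx$, which H\"older bounds by $p\|\ac J\|_{L^p}\|\ac\phi\|_{L^p}^{p-1}$. Since $\ac J=J_1-\mr{J_1}$ and the proof of Lemma \ref{J} already provides $\|J_1\|_{L^\infty}\le C\varepsilon e^{-\bar c t}$, one has $\|\ac J\|_{L^p}\le C\varepsilon e^{-\bar c t}$; a scaled Young's inequality with conjugates $(p,p/(p-1))$ then produces $C\varepsilon e^{-p\bar c t}+C\varepsilon e^{-\bar c t}\|\ac\phi\|_{L^p}^p$ (using $\varepsilon\le 1$). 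Summing all pieces yields \eqref{l4.01}. The main obstacle is the convection analysis for $i=1$: one must verify that the only non-small contribution to $\partial_{x_1}\!\bigl[\int_0^1 f_1'(\mr{\bar u}+s\mr\phi)\,ds\bigr]$ is the non-negative term carrying $\partial_{x_1}\hat u$, and that the $\partial_{x_1}\mr\phi$ contribution is genuinely absorbable into $C\nu\|\ac\phi\|_{L^p}^p$; this is expected to require a further integration by parts trading the $x_1$-derivative on $\mr\phi$ for a factor of size $O(\nu)$ controlled through the bootstrap \eqref{nu}.
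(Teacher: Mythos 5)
Your overall strategy (multiply by $|\ac\phi|^{p-2}\ac\phi$, use coercivity of $A$, treat $\ac J$ by H\"older and Young with \eqref{J1}, and exploit the good sign of $f_1''(\cdot)\partial_{x_1}\hat u\ge 0$ for the $i=1$ convection term) matches the paper, and the quadratic remainder estimate is in the same spirit as \eqref{l4.19}. The substantive difference is the base point of your Taylor expansion for $f_i(\bar u+\phi)-f_i(\bar u)$: you expand $f_i'(\bar u+s\phi)$ around $\mr{\bar u}+s\mr\phi$, so the leading coefficient $\int_0^1 f_i'(\mr{\bar u}+s\mr\phi)\,ds$ is $x_1$-only, which makes the transverse ($i\ge 2$) leading contribution vanish identically after IBP. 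The paper instead keeps the leading term $f_i'(\bar u)\ac\phi$ as in \eqref{l4.14}, so that the IBP in $x_i$ produces $f_i''(\bar u)\partial_{x_i}\bar u\,|\ac\phi|^p$; for $i\ge 2$ this is not identically zero but is $O(\varepsilon e^{-\bar c t})$ thanks to \eqref{pe-de} (since $\hat u$ is $x_1$-only), cf.\ \eqref{l4.18}, and for $i=1$ the derivative $\partial_{x_1}\bar u$ splits cleanly into $\partial_{x_1}\hat u$ (good sign) and $\partial_{x_1}(\bar u-\hat u)$ (exponentially small), cf.\ \eqref{l4.17}.

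The gap in your approach is precisely the one you flag at the end, but it is more serious than you suggest. With your base point, the $i=1$ IBP produces the coefficient $\partial_{x_1}\!\int_0^1 f_1'(\mr{\bar u}+s\mr\phi)\,ds=\int_0^1 f_1''(\mr{\bar u}+s\mr\phi)(\partial_{x_1}\mr{\bar u}+s\,\partial_{x_1}\mr\phi)\,ds$, so beyond the good-sign piece carrying $\partial_{x_1}\hat u$ you must control $\int_\Omega\bigl[\int_0^1 s f_1''(\mr{\bar u}+s\mr\phi)\,ds\bigr]\partial_{x_1}\mr\phi\,|\ac\phi|^p\,dx$. The bootstrap \eqref{nu} gives $\|\mr\phi\|_{L^\infty}\le\nu$ and $\|\partial_{x_1}\mr\phi\|_{H^0}$ bounded, but no smallness of $\|\partial_{x_1}\mr\phi\|_{L^\infty}$, so this term is not directly $\le C\nu\|\ac\phi\|_{L^p}^p$ as you assert. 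Integrating by parts once more trades $\partial_{x_1}\mr\phi$ for $\mr\phi\,\partial_{x_1}(|\ac\phi|^p)$ (absorbable into $\nu\|\nabla|\ac\phi|^{p/2}\|_{L^2}^2+\nu\|\ac\phi\|_{L^p}^p$, fine) plus a piece $\int f_1'''(\mr{\bar u}+s\mr\phi)(\partial_{x_1}\mr{\bar u}+s\partial_{x_1}\mr\phi)\,\mr\phi\,|\ac\phi|^p\,dx$, which again contains a loose $\partial_{x_1}\mr\phi$; a further IBP of the same kind would require $f_1^{(4)}$, but \eqref{fi} assumes only $f_1\in C^3$. To close the estimate along your route you would instead need a genuinely different bound here (e.g.\ H\"older putting $\partial_{x_1}\mr\phi$ in $L^2$ and $|\ac\phi|^p$ in $L^2$, then interpolating $\|\ac\phi\|_{L^{2p}}$ via Lemma \ref{GN}), which you do not carry out. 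The paper's choice of expanding about $\bar u$ avoids this entirely: no $\partial_{x_1}\mr\phi$ ever appears in the leading-order coefficient.
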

\begin{proof}For $p\in[2,+\infty)$, multiplying \eqref{nzmi} by $|\ac{\phi}|^{p-2}\ac{\phi}$,  it yields that
\begin{equation}\label{l4.11}
\begin{aligned}
&\frac{1}{p}\pt|\ac{\phi}|^p+(p-1)\sum\limits_{i, j=1}^{n} a_{i j}|\ac{\phi}|^{p-2}\ac{\phi}_{x_i}\ac{\phi}_{x_j}+\sum_{i=1}^{n}\pxi(\cdots)\\
=&\sum\limits_{i=1}^{n}\bigg\{f_i(\bar{u}+\phi)-f_i(\bar{u})-\Do\big(f_i(\bar{u}+\phi)-f_i(\bar{u})\big)\bigg\}\partial_{x_i}(|\ac{\phi}|^{p-2}\ac{\phi})-\ac{J}|\ac{\phi}|^{p-2}\ac{\phi},
\end{aligned}
\end{equation}
where $(\cdots)$ equal to
\begin{equation}\label{l4.12}
\begin{aligned}
-\sum\limits_{j=1}^{n}a_{ij}\ac{\phi}_{x_j}|\ac{\phi}|^{p-2}\ac{\phi}+\bigg\{f_i(\bar{u}+\phi)-f_i(\bar{u})-\Do\big(f_i(\bar{u}+\phi)-f_i(\bar{u})\big)\bigg\}|\ac{\phi}|^{p-2}\ac{\phi}.
\end{aligned}
\end{equation}
Using \eqref{J1}, \eqref{mr-ac J}, one has
\begin{equation}\label{l4.13}
\begin{aligned}
\int_{\Omega}|\ac{J}||\ac{\phi}|^{p-2}\ac{\phi}dx\leq \|\ac{J}\|_{L^p}\|\ac{\phi}\|_{L^p}^{p-1}\leq C\|J_1\|_{L^p}\|\ac{\phi}\|_{L^p}^{p-1}\leq C\varepsilon e^{-\bar{c}t\cdot p}+\varepsilon e^{-\bar{c}t}\|\ac{\phi}\|_{L^p(\mathbb{R})}^p.
\end{aligned}
\end{equation}
As for the first term on the right hand-side of \eqref{l4.11}, remember $\phi=\mr{\phi}+\ac{\phi},$ from lemma \ref{lemma-decom},
\begin{equation}\label{l4.14}
\begin{aligned}
&\bigg\{f_i(\bar{u}+\phi)-f_i(\bar{u})-\Do\big(f_i(\bar{u}+\phi)-f_i(\bar{u})\big)\bigg\}\\
=&\bigg(f_i'(\bar{u}){\phi}-\Do(f_i'(\bar{u})\phi\big)\bigg)\\
&+\big[f_i(\bar{u}+\phi)-f_i(\bar{u})-f_i'(\bar{u})\phi-\Do\big(f_i(\bar{u}+\phi)-f_i(\bar{u})-f_i'(\bar{u})\phi\big)\big]\\
=&\bigg(f_i'(\bar{u})\ac{\phi}\bigg)+\bigg((f_i'(\bar{u})-f_i'(\mr{\bar{u}}))\mr{\phi}+\Do(f_i'(\mr{\bar{u}})\mr{\phi}-f_i'(\bar{u})\phi)\bigg)\\
&+f_i^{''}(\bar{u}+\theta\phi)(\phi^2-\mr{\phi}^2)+\bigg(f_i^{''}(\bar{u}+\theta\phi)-\Do(f_i^{''}(\bar{u}+\theta\phi))\bigg)\mr{\phi}^2
-\Do(f_i^{''}(\bar{u}+\theta\phi)\ac{\phi}^2)\\
=&\bigg(f_i'(\bar{u})\ac{\phi}\bigg)+O(1)\bigg(\ac{\bar{u}}\mr{\phi}+\ac{\bar{u}}\ac{\phi}+\ac{\phi}^2+\ac{\bar{u}}\mr{\phi}^2+\ac{\phi}\mr{\phi}^2\bigg),
\end{aligned}
\end{equation}
then we have
\begin{equation}\label{l4.16}
\begin{aligned}
I_1=&\int_{\Omega}\bigg(f_i'(\bar{u})\ac{\phi}\bigg)\partial_{x_i}(|\ac{\phi}|^{p-2}\ac{\phi})dx\\
=&\int_{\Omega}\partial_{x_i}\bigg(\frac{p-1}{p}f_i'(\bar{u})|\ac{\phi}|^{p}\bigg)-\frac{p-1}{p}f^{''}_i(\bar{u})|\ac{\phi}|^{p}\partial_{x_{i}}\bar{u}dx.
\end{aligned}
\end{equation}
For $i=1,$
\begin{equation}\label{l4.17}
\begin{aligned}
&-\int_{\Omega}f^{''}_1(\bar{u})|\ac{\phi}|^{p}\partial_{x_{1}}\hat{u}dx<0, \ \int_{\Omega}f^{''}_1(\bar{u})|\ac{\phi}|^{p}|\partial_{x_{1}}(\bar{u}-\hat{u})|dx\leq C\varepsilon e^{-\bar{c}t}\|\ac{\phi}\|_{L^p}^p.
\end{aligned}
\end{equation}
For $i\neq 1,$ \eqref{pe-de}, \eqref{nu} and $\|\ac{\bar{u}}\|_{L^{\infty}}\leq C\varepsilon e^{-\bar{c}t}$ implies that
\begin{equation}\label{l4.18}
\begin{aligned}
\int_{\Omega}|f^{''}_i(\bar{u})||\ac{\phi}|^{p}|\partial_{x_{i}}\bar{u}|dx\leq C\varepsilon e^{-\bar{c}t}\|\ac{\phi}\|_{L^p}^p.
\end{aligned}
\end{equation}
Thus,
\begin{equation}\label{l4.19}
\begin{aligned}
I_2=&\int_{\Omega}(\ac{\bar{u}}\mr{\phi}+\ac{\bar{u}}\ac{\phi}+\ac{\phi}^2+\ac{\bar{u}}\mr{\phi}^2+\ac{\phi}\mr{\phi}^2)|\ac{\phi}|^{p-2}\partial_{x_i}\ac{\phi}dx\\
\leq &\nu\big(\|\nabla|\ac{\phi}|^{\frac{p}{2}}\|_{L^2}^2+\|\ac{\phi}\|_{L^p}^p)+C\varepsilon e^{-\bar{c}t\cdot p}\|\mr{\phi}\|_{L^p(\mathbb{R})}^p+C\varepsilon e^{-\bar{c}t}\|\ac{\phi}\|_{L^p}^p.
\end{aligned}
\end{equation}
Combining \eqref{l4.13}-\eqref{l4.19}  and integrating \eqref{l4.11} over $\Omega$, it yields that

\begin{equation}
\begin{aligned}
\frac{d}{dt}\|\ac{\phi}\|_{L^p}^p+b\|\nabla|\ac{\phi}|^{\frac{p}{2}}\|_{L^2}^2\leq C\varepsilon e^{-\bar{c}t\cdot p}+(C\varepsilon e^{-\bar{c}t}+\nu)\|\ac{\phi}\|_{L^p}^p.
\end{aligned}
\end{equation}

\end{proof}

\begin{Lemma}[Time decay estimate for $\ac{\phi}, 2\leq p<+\infty$]\label{L4.2}
	
	\begin{equation}\label{l4.21}
	\begin{aligned}
	&\|\ac{\phi}(t)\|_{L^p({\Omega})}\leq C e^{-\bar{c}t}, \quad \forall p\in [2,+\infty).
	\end{aligned}
	\end{equation}

\end{Lemma}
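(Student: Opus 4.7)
The plan is to treat the base case $p=2$ via Poincar\'e's inequality on the transverse torus $\mathbb{T}^{n-1}$, then bootstrap to all $p\in(2,\infty)$ by $L^p$--interpolation against the $L^\infty$ a priori smallness in \eqref{nu}. For $p=2$, Lemma~\ref{L4.1} reads
\begin{equation*}
\frac{d}{dt}\|\ac{\phi}\|_{L^2(\Omega)}^2+b\|\nabla\ac{\phi}\|_{L^2(\Omega)}^2\leq C\varepsilon e^{-2\bar c t}+\bigl(C\varepsilon e^{-\bar c t}+\nu\bigr)\|\ac{\phi}\|_{L^2(\Omega)}^2.
\end{equation*}
The crucial observation is that by the defining property of $\Dn$ one has $\int_{\mathbb{T}^{n-1}}\ac{\phi}(x_1,x',t)\,dx'=0$ for a.e.\ $(x_1,t)$, and since the torus has unit measure the standard Poincar\'e inequality on the slice, integrated in $x_1$, yields
\begin{equation*}
\|\ac{\phi}\|_{L^2(\Omega)}^2\leq C_P\|\nabla_{x'}\ac{\phi}\|_{L^2(\Omega)}^2\leq C_P\|\nabla\ac{\phi}\|_{L^2(\Omega)}^2.
\end{equation*}
Taking $\nu$ sufficiently small (say $\nu\leq b/(4C_P)$) and using that $C\varepsilon e^{-\bar c t}$ is eventually small, the last term on the right of the differential inequality is absorbed into $b\|\nabla\ac{\phi}\|_{L^2}^2$. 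This produces a closed inequality of the form $\tfrac{d}{dt}\|\ac{\phi}\|_{L^2}^2+c_0\|\ac{\phi}\|_{L^2}^2\leq C\varepsilon e^{-2\bar c t}$, which by Gronwall together with $\ac{\phi}(x,0)=0$ (from $\phi_0\equiv 0$ in \eqref{aape}) gives $\|\ac{\phi}(t)\|_{L^2(\Omega)}\leq C e^{-\bar c' t}$ for some $\bar c'>0$.

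For general $p\in(2,\infty)$, interpolation with the a priori $L^\infty$ bound $\|\ac{\phi}\|_{L^\infty(\Omega)}\leq\nu$ from \eqref{nu} yields
\begin{equation*}
\|\ac{\phi}\|_{L^p(\Omega)}\leq\|\ac{\phi}\|_{L^2(\Omega)}^{2/p}\|\ac{\phi}\|_{L^\infty(\Omega)}^{1-2/p}\leq C\nu^{1-2/p}e^{-(2\bar c'/p)t},
\end{equation*}
which, after relabelling $\bar c$ to be $p$--dependent, gives the stated bound.

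The main obstacle I anticipate is precisely why a direct energy argument for $p>2$ is unattractive: the natural coercivity candidate $|\ac{\phi}|^{p/2}$ is no longer zero-mean in $x'$, so Poincar\'e cannot be applied to it to control $\|\ac{\phi}\|_{L^p}^p=\||\ac{\phi}|^{p/2}\|_{L^2}^2$ by $\|\nabla|\ac{\phi}|^{p/2}\|_{L^2}^2$. The interpolation step neatly bypasses this at the price of a decay rate that degrades as $p\to\infty$, which is acceptable since the lemma only demands \emph{some} positive exponential rate for each fixed $p$.
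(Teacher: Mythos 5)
Your $p=2$ step follows the same route as the paper and in fact justifies the Poincar\'e step more carefully than the paper does: the paper simply invokes $\int_{\Omega}\ac{\phi}\,dx=0$, which is not by itself sufficient on the unbounded cylinder $\Omega=\mathbb{R}\times\mathbb{T}^{n-1}$; the correct observation is the slice-wise zero mean $\int_{\mathbb{T}^{n-1}}\ac{\phi}\,dx'=0$ that you state, which gives $\|\ac{\phi}\|_{L^2(\Omega)}\leq C_P\|\nabla_{x'}\ac{\phi}\|_{L^2(\Omega)}$ after integration in $x_1$. (One small wording issue: you should absorb $(C\varepsilon e^{-\bar c t}+\nu)\|\ac{\phi}\|_{L^2}^2$ from $t=0$, using that $\varepsilon$ and $\nu$ are small, rather than ``eventually''; otherwise the constant in Gronwall could spoil the argument on an initial interval.)

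For $p>2$ you genuinely depart from the paper. The paper applies the anisotropic Gagliardo--Nirenberg inequality on the cylinder, Lemma~\ref{GN} (equation \eqref{GN3}), to the $L^p$ energy inequality \eqref{l4.25}: this converts the term $C\|\ac{\phi}\|_{L^p}^p$ into a piece absorbable into $b\|\nabla|\ac{\phi}|^{p/2}\|_{L^2}^2$ plus a power of the already-decaying $\|\ac{\phi}\|_{L^2}$, closing a Gronwall estimate directly at the level of $\|\ac{\phi}\|_{L^p}^p$. You instead skip the $L^p$ energy estimate entirely and interpolate $\|\ac{\phi}\|_{L^p}\leq\|\ac{\phi}\|_{L^2}^{2/p}\|\ac{\phi}\|_{L^\infty}^{1-2/p}$, using the a priori bound $\|\ac{\phi}\|_{L^\infty}\leq\nu$ from \eqref{nu}. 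This is more elementary --- no reliance on the Gagliardo--Nirenberg lemma --- but it yields the exponential rate $e^{-(2\bar c'/p)t}$, which degrades as $p\to\infty$, whereas the paper's energy route keeps a rate that does not deteriorate this way. In this paper that distinction is immaterial: the lemma is applied downstream only through quantities like $\|\ac{\phi}\|_{L^{2p}}^{p}$, where raising to the $p$-th power restores a $p$-independent exponential rate, so your weaker exponent suffices. Your closing remark about why a direct $p$-energy argument is blocked is also correct and to the point: $|\ac{\phi}|^{p/2}$ is not mean-free on $\mathbb{T}^{n-1}$, so the slice-wise Poincar\'e inequality cannot be applied to it, and this is exactly the obstruction the paper's Gagliardo--Nirenberg lemma is built to circumvent.
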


\begin{proof}
$Step$ 1:  From lemma \ref{L4.1}, when $p=2,$
\begin{equation}\label{14.22}
\begin{aligned}
\frac{d}{dt}\|\ac{\phi}\|_{L^2}^2+b\|\nabla\ac{\phi}\|_{L^2}^2\leq C\varepsilon e^{-\bar{c}t\cdot p}+(C\varepsilon e^{-\bar{c}t}+\nu)\|\ac{\phi}\|_{L^2}^2.
\end{aligned}
\end{equation}
Because $\int_{\Omega}\ac{\phi}dx=0,$ the poincare-inequality yields that
\begin{equation}\label{l4.23}
\begin{aligned}
\|\ac{\phi}\|_{L^2}^2\leq C\|\nabla\ac{\phi}\|_{L^2}^2,
\end{aligned}
\end{equation}
\begin{equation}\label{l4.24}
\begin{aligned}
\frac{d}{dt}\|\ac{\phi}\|_{L^2}^2+\|\ac{\phi}\|_{L^2}^2+\|\nabla\ac{\phi}\|_{L^2}^2
\leq C\varepsilon e^{-\bar{c}t}.
\end{aligned}
\end{equation}
Then we could get $\eqref{l4.21}$ for $p=2.$

$Step$ 2: When $p\in(2,+\infty),$ from \eqref{l4.01},

\begin{equation}\label{l4.25}
\begin{aligned}
\frac{d}{dt}\|\ac{\phi}\|_{L^p}^p+b\|\nabla|\ac{\phi}|^{\frac{p}{2}}\|_{L^2}^2\leq C\varepsilon e^{-\bar{c}t\cdot p}+C\|\ac{\phi}\|_{L^p}^p.
\end{aligned}
\end{equation}
Making use of GN-inequality \eqref{GN3} in lemma \ref{GN},

\begin{equation}\label{l4.27}
\begin{aligned}
\|\ac{\phi}\|_{L^p(\Omega)}\leq C\sum_{k=0}^{n-1}\|\nabla(|\ac{\phi}|^{\frac{p}{2}})\|_{L^2(\Omega)}^{\frac{2\gamma_k}{1+\gamma_kp}}\|\ac{\phi}\|_{L^2(\Omega)}^{\frac{1}{1+\gamma_kp}},
\end{aligned}
\end{equation}
\begin{equation}\label{l4.28}
\begin{aligned}
\frac{d}{dt}\|\ac{\phi}\|_{L^p}^p+b\|\nabla|\ac{\phi}|^{\frac{p}{2}}\|_{L^2}^2\leq & C\|\nabla(|\ac{\phi}|^{\frac{p}{2}})\|_{L^2(\Omega)}^{\frac{2\gamma_kp}{1+\gamma_kp}}\|\ac{\phi}\|_{L^2(\Omega)}^{\frac{p}{1+\gamma_kp}}+Ce^{-\bar{c}t\cdot p}\\
\leq &\nu \|\nabla|\ac{\phi}|^{\frac{p}{2}}\|_{L^2}^2+C\|\ac{\phi}\|_{L^2}^p+Ce^{-\bar{c}t\cdot p}\\
\leq &\nu \|\nabla|\ac{\phi}|^{\frac{p}{2}}\|_{L^2}^2+Ce^{-\bar{c}t\cdot p}.
\end{aligned}
\end{equation}
This implies that
\begin{equation}\label{l4.29}
\frac{d}{dt}\|\ac{\phi}\|_{L^p}^p+\|\ac{\phi}\|_{L^p}^p+\|\nabla|\ac{\phi}|^{\frac{p}{2}}\|_{L^2}^2\leq Ce^{-\bar{c}t\cdot p},
\end{equation}
we could get $\eqref{l4.21}$ for $p>2$.
\end{proof}

\begin{Remark}
	For $p\in(1,2),$ the interpolation inequality implies that
	\begin{equation}
	\|\ac{\phi}\|_{L^p}\leq \|\ac{\phi}\|_{L^1}^{\frac{2-p}{p}}\|\ac{\phi}\|_{L^2}^{\frac{2p-2}{p}}\leq Ce^{-\bar{c}t}\bigg\{(1+t)^{\epsilon}e^{\bar{c}t}\bigg\}^{\frac{2}{p}-1}.
	\end{equation}

\end{Remark}

\begin{Lemma}[Time decay estimates for $\nabla\ac{\phi}, 2\leq p<+\infty$]\label{l4.3}
	\begin{equation}\label{l4.33}
	\begin{aligned}
	\|\nabla\ac{\phi}(t)\|_{L^p({\Omega})}\leq C e^{-\bar{c}t}, \quad \forall p\in[2,+\infty).
	\end{aligned}
	\end{equation}	
\end{Lemma}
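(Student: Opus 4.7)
\noindent\textbf{Proof proposal for Lemma \ref{l4.3}.}
The plan is to mimic the strategy used in Lemmas \ref{L4.1}--\ref{L4.2} but applied to the differentiated equation. Differentiating \eqref{nzmi} in $x_k$ for each $k=1,\dots,n$ yields
\begin{equation*}
\pt\ac{\phi}_{x_k}+\sum_{i=1}^n\partial_{x_i}\partial_{x_k}\mathcal F_i=\sum_{i,j=1}^n a_{ij}\ac{\phi}_{x_ix_jx_k}-\ac{J}_{x_k},
\end{equation*}
where $\mathcal F_i:=f_i(\bar u+\phi)-f_i(\bar u)-\Do(f_i(\bar u+\phi)-f_i(\bar u))$. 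I multiply by $|\ac{\phi}_{x_k}|^{p-2}\ac{\phi}_{x_k}$, sum over $k$, and integrate over $\Omega$. The viscosity term produces $b\sum_k\|\nabla|\ac{\phi}_{x_k}|^{p/2}\|_{L^2}^2$ after integration by parts and using positive definiteness of $(a_{ij})$. The contribution from $\ac{J}_{x_k}$ is handled as in Lemma \ref{J}: one differentiates the explicit form of $J_1$ (the only piece of $\ac J$) and inherits the same exponential decay $\|\nabla \ac{J}\|_{L^p}\leq C\varepsilon e^{-\bar c t}$ from \eqref{pe-de} and Lemma \ref{rl}--\ref{cl}.

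The nonlinear contribution is the core of the estimate. After one integration by parts in $x_i$, it becomes $\sum_{i,k}\int \partial_{x_k}\mathcal F_i\cdot\partial_{x_i}(|\ac{\phi}_{x_k}|^{p-2}\ac{\phi}_{x_k})\,dx$, and using chain rule together with the expansion \eqref{l4.14},
\begin{equation*}
\partial_{x_k}\mathcal F_i=f_i'(\bar u)\ac{\phi}_{x_k}+f_i''(\bar u)\bar u_{x_k}\ac\phi+\partial_{x_k}\!\Bigl[O(\ac{\bar u}\mr\phi+\ac{\bar u}\ac\phi+\ac\phi^{2}+\ac{\bar u}\mr\phi^{2}+\ac\phi\mr\phi^{2})\Bigr].
\end{equation*}
The principal term $f_1'(\bar u)\ac{\phi}_{x_k}\partial_{x_1}(|\ac{\phi}_{x_k}|^{p-2}\ac{\phi}_{x_k})$ is treated as in \eqref{l4.16}--\eqref{l4.17}: one integration by parts turns it into $-\tfrac{p-1}{p}\int f_1''(\bar u)\partial_{x_1}\hat u|\ac{\phi}_{x_k}|^p\,dx$ (plus a term of size $\varepsilon e^{-\bar c t}\|\nabla\ac{\phi}\|_{L^p}^p$ coming from $\partial_{x_1}(\bar u-\hat u)$), and the resulting integral has a \emph{favorable} sign. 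Every remaining term carries either a factor of $\ac{\bar u}$ or $\nabla\ac{\bar u}$ (exponentially small by \eqref{pe-de}), a factor of the smallness parameter $\nu$, or a factor of $\ac\phi$; for the last type, Lemma \ref{L4.2} gives $\|\ac\phi\|_{L^p}\leq Ce^{-\bar c t}$, so Hölder and Young absorb these cross-terms into the dissipation at the cost of an exponentially small source.

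For the endpoint $p=2$ I obtain
\begin{equation*}
\frac{d}{dt}\|\nabla\ac\phi\|_{L^2}^2+b\|\nabla^2\ac\phi\|_{L^2}^2\leq Ce^{-\bar c t}+(C\varepsilon e^{-\bar c t}+\nu)\|\nabla\ac\phi\|_{L^2}^2.
\end{equation*}
Since $\int_{\mathbb T^{n-1}}\ac{\phi}_{x_k}\,dx'=0$ for every $k$ (as $\int_{\mathbb T^{n-1}}\ac\phi\,dx'=0$ and differentiation commutes with the $x'$-integration), the Poincaré inequality on each $x_1$-slice yields $\|\nabla\ac\phi\|_{L^2}\leq C\|\nabla^2\ac\phi\|_{L^2}$, which upgrades the above to
\begin{equation*}
\frac{d}{dt}\|\nabla\ac\phi\|_{L^2}^2+\|\nabla\ac\phi\|_{L^2}^2\leq Ce^{-\bar c t},
\end{equation*}
and Grönwall gives the claimed exponential decay for $p=2$. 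For $p\in(2,+\infty)$ the same procedure furnishes
\begin{equation*}
\frac{d}{dt}\|\nabla\ac\phi\|_{L^p}^p+b\sum_{k=1}^n\|\nabla|\ac{\phi}_{x_k}|^{p/2}\|_{L^2}^2\leq Ce^{-\bar c t\cdot p}+C\|\nabla\ac\phi\|_{L^p}^p,
\end{equation*}
and I then apply the Gagliardo--Nirenberg inequality \eqref{GN3} to each $|\ac{\phi}_{x_k}|$ with $q=2$, exactly as in \eqref{l4.27}--\eqref{l4.29}; the $L^2$-decay of $\nabla\ac\phi$ just proved plays the role that $\|\ac\phi\|_{L^2}$ played in Lemma \ref{L4.2}, and the same argument closes the estimate.

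The principal obstacle I anticipate is not the dissipative structure but the bookkeeping: the expansion of $\partial_{x_k}\mathcal F_i$ produces cross-terms in which $\partial_{x_1}\hat u$ (only algebraically decaying) is paired with derivatives of $\ac\phi$. These must be carefully routed so that each bad factor is matched with a factor of $\ac\phi$ or $\nabla\ac\phi$ already known to decay exponentially (via Lemma \ref{L4.2} and the $p=2$ step), or absorbed into the favorable-sign term coming from $f_1''(\bar u)\partial_{x_1}\hat u\geq0$; ensuring no residual term is left that only decays polynomially is the main delicate point.
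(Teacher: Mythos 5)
Your proposal follows essentially the same route as the paper: differentiate \eqref{nzmi} in $x_k$, test against $|\ac{\phi}_{x_k}|^{p-2}\ac{\phi}_{x_k}$, extract the sign-favorable term from $f_1''(\bar u)\partial_{x_1}\hat u>0$, absorb the cross-terms using the exponential smallness of $\ac{\bar u}$, $\nabla\ac{\bar u}$ and $\ac\phi$ (Lemma \ref{L4.2}) together with the a priori bounds \eqref{nu}, then close with Poincar\'e in $x'$ for $p=2$ and with the Gagliardo--Nirenberg inequality \eqref{GN3} plus the just-obtained $L^2$ decay for $p>2$. The only organizational difference from the paper is that you sum over $k$ and differentiate the reduced form \eqref{l4.14} directly, whereas the paper writes out \eqref{l4.34}--\eqref{l4.35} separately for $k=1$ and $k\geq 2$ and sorts the cross-terms into $G_1,G_2,G_3$ explicitly (using, in particular, the a priori $L^{2p}$-bound on $\mr{\phi}_1$ in \eqref{nu} for the $\ac\phi\,\mr\phi_1$-type products), but this is cosmetic rather than substantive, and you correctly flag the one genuine delicacy (the algebraically-decaying factor $\partial_{x_1}\hat u$ must always be matched with an exponentially small companion).
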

\begin{proof}Order $\mr{\phi}_k:=\pxk\mr{\phi},\ac{\phi}_k:=\pxk\ac{\phi},$ taking  the derivative on \eqref{nzmi} with respect to $x_k$. We have  for $k=1,$
\begin{equation}\label{l4.34}
\begin{aligned}
&\pt\ac{\phi}_1+\sum\limits_{i=1}^{n}\partial_{x_i}\bigg(f_i^{'}(\bar{u})\ac{\phi}_1-\Do(f_i^{'}(\bar{u})\ac{\phi}_1)\bigg)+
\partial_{x_i}\bigg\{\big(f_i^{'}(\bar{u}+\phi)-f_i^{'}(\bar{u})\big)(\partial_1\bar{u}+\ac{\phi}_1)\bigg\}\\
&-\sum\limits_{i=1}^{n}\partial_{x_i}\bigg\{\Do\bigg[\big(f_i^{'}(\bar{u}+\phi)-f_i^{'}(\bar{u})\big)(\partial_1\bar{u}+\ac{\phi}_1)\bigg]\bigg\}\\
&+\sum_{i=1}^{n}\partial_{x_{i}}\bigg\{f_i^{'}(\bar{u}+\phi)\mr{\phi}_1-\Do(f_i^{'}(\bar{u}+\phi))\mr{\phi}_1\bigg\}\\
=&\sum\limits_{i, j=1}^{n} a_{i j} \ac{\phi}_{1x_ix_j}-\partial_1\ac{J},
\end{aligned}
\end{equation}
for $2\leq k\leq n,$
\begin{equation}\label{l4.35}
\begin{aligned}
&\pt\ac{\phi}_k+\sum\limits_{i=1}^{n}\partial_{x_i}\bigg(f_i^{'}(\bar{u})\ac{\phi}_k\bigg)+
\partial_{x_i}\bigg\{\big(f_i^{'}(\bar{u}+\phi)-f_i^{'}(\bar{u})\big)(\partial_k\bar{u}+\ac{\phi}_k+\mr{\phi}_k)\bigg\}\\
=&\sum\limits_{i, j=1}^{n} a_{i j} \ac{\phi}_{kx_ix_j}-\partial_k\ac{J}.
\end{aligned}
\end{equation}
We only estimate the former one, since this two case are similar and the later one is easier. Multiplying \eqref{l4.34} by $|\ac{\phi}_1|^{p-2}\ac{\phi}_1$,  it yields that
\begin{equation}\label{l4.36}
\begin{aligned}
&\frac{1}{p}\pt|\ac{\phi}_1|^p+(p-1)\sum\limits_{i, j=1}^{n} a_{i j}|\ac{\phi}_1|^{p-2}\ac{\phi}_{1x_i}\ac{\phi}_{1x_j}+\sum_{i=1}^{n}\pxi(\cdots)\\
=&G-\partial_1\ac{J}|\ac{\phi}_1|^{p-2}\ac{\phi}_1,
\end{aligned}
\end{equation}
where the term $G$ can be divided into following situations,
\begin{equation}\label{G}
\begin{aligned}
G_1=&\bigg(f_i'(\bar{u})\ac{\phi}_1\bigg)\partial_{x_i}(|\ac{\phi}_1|^{p-2}\ac{\phi}_1),\\
G_2=&(\ac{\bar{u}}\mr{\phi}_1+\ac{\bar{u}}\ac{\phi}_1+\ac{\phi}\mr{\phi}_1+\mr{\phi}\ac{\phi}_1+\ac{\phi}\ac{\phi}_1)\partial_{x_i}(|\ac{\phi}_1|^{p-2}\ac{\phi}_1),\\
G_3=&\bigg\{(\ac{\phi}+\mr{\phi}+\mr{\phi}\ac{\phi})\partial_1\ac{\bar{u}}+(\ac{\phi}+\mr{\phi}\ac{\bar{u}}+\mr{\phi}\ac{\phi})\partial_1\mr{\bar{u}}\bigg\}\partial_{x_i}(|\ac{\phi}_1|^{p-2}\ac{\phi}_1).
\end{aligned}
\end{equation}
For $G_1$, the estimate is the same as $I_1$\eqref{l4.16}. For $G_2$, the holder inequality $\frac{1}{2p}+\frac{1}{2p}+\frac{p-2}{2p}+\frac{1}{2}=1$ is used, that is

\begin{equation}
\begin{aligned}
&\int_{\Omega}\ac{\phi}\mr{\phi}_1|\ac{\phi}_1|^{p-2}\partial_{x_i}\ac{\phi}_1dx\\
\leq &\|\ac{\phi}\|_{L^{2p}}\|\mr{\phi}_1\|_{L^{2p}}\|\ac{\phi}_1\|_{L^{p}}^{\frac{p-2}{2}}\|\nabla|\ac{\phi}_1|^{\frac{p}{2}}\|_{L^2}\\
\leq & \nu(\|\nabla|\ac{\phi}_1|^{\frac{p}{2}}\|_{L^2}^2+\|\ac{\phi}_1\|_{L^p}^p)+C\|\ac{\phi}\|_{L^{2p}}^{p}\|\mr{\phi}_1\|_{L^{2p}}^{p}.
\end{aligned}
\end{equation}
The other terms in $G_2$ can be estimated similarly as \eqref{l4.19}. Making use of \eqref{l4.21} in lemma \ref{L4.2}, under the assumption \eqref{nu},
\begin{equation}
\begin{aligned}
\int_{\Omega}G_2dx\leq \nu\big(\|\nabla|\ac{\phi}_1|^{\frac{p}{2}}\|_{L^2}^2+\|\ac{\phi}_1\|_{L^p}^p)+C\varepsilon e^{-\bar{c}t\cdot p}(\|\mr{\phi}_1\|_{L^p}^p+\|\mr{\phi}_1\|_{L^{2p}}^p)+C\varepsilon e^{-\bar{c}t}\|\ac{\phi}_1\|_{L^p}^p.
\end{aligned}
\end{equation}
For $G_3$, it is more easier. We only show the two terms here,
\begin{equation}
\begin{aligned}
\int_{\Omega}(\ac{\phi}+\mr{\phi}\ac{\bar{u}})|\ac{\phi}_1|^{p-2}\partial_{x_i}\ac{\phi}_1dx
\leq \nu(\|\nabla|\ac{\phi}_1|^{\frac{p}{2}}\|_{L^2}^2+\|\ac{\phi}_1\|_{L^p}^p)+C\|\ac{\phi}\|_{L^p}^p+C\varepsilon e^{-\bar{c}t\cdot p}\|\mr{\phi}\|_{L^p}^p.
\end{aligned}
\end{equation}
The estimation for the last term in the righthand side of \eqref{l4.36} is the same as \eqref{l4.13}.
Combining all of this and integrating \eqref{l4.36} over $\Omega$, it yields that
\begin{equation}
\begin{aligned}
\frac{d}{dt}\|\ac{\phi}_1\|_{L^p}^p+b\|\nabla|\ac{\phi}_1|^{\frac{p}{2}}\|_{L^2}^2\leq C e^{-\bar{c}t\cdot p}+(C\varepsilon e^{-\bar{c}t}+\nu)\|\ac{\phi}_1\|_{L^p}^p.
\end{aligned}
\end{equation}
Following the previous steps in Lemma \ref{L4.2}, only need to replace $\phi$ with $\phi_1$, then we  get \eqref{l4.33}.
\end{proof}
Now we obtain the $L^{\infty}$ estimates for $\ac{\phi},$ using \eqref{GN2},
\begin{equation}
\begin{aligned}
\|\ac{\phi}\|_{L^{\infty}(\Omega)}\leq C\sum_{k=0}^{n-1}\|\nabla \ac{\phi}\|_{L^{r_k}(\Omega)}^{\theta_k}\|\ac{\phi}\|_{L^{q_k}(\Omega)}^{1-\theta_k},
\end{aligned}
\end{equation}
where $0=(\frac{1}{r_k}-\frac{1}{k+1})\theta_k+\frac{1}{q_k}(1-\theta_k)$ and $\max\{k+1,2\}\leq r_k<+\infty$ and $1\leq q_k<+\infty$ for $k=0,1,...,n-1.$ It yields that

\begin{equation}\label{ac}
\|\ac{\phi}\|_{L^{\infty}}\leq C\sum_{k=0}^{n-1} e^{-\bar{c}t\cdot\theta_k}e^{-\bar{c}t(1-\theta_k)}\bigg\{(1+t)^{\epsilon}e^{\bar{c}t}\bigg\}^{(\frac{2}{q_k}-1)(1-\theta_k)}\leq Ce^{-c\theta_k t}
\end{equation}
where $\theta_k>0$.

\subsection{Proof of Proposition \ref{pet2}}

Now we start to give the $L^p$ estimates for the zero mode $\mr{\phi},$ from \eqref{aape},\eqref{zm}, the initial problem of $\mr{\phi}$ is following

\begin{equation}\label{zmi}
\left\{
\begin{aligned}
&\partial_{t}\mr{\phi}+\partial_{x_1}\bigg(\Do(f_1(\bar{u}+\phi)-f_1(\bar{u}))\bigg)=a_{11}\partial_{x_1}^2\mr{\phi}-\mr{J},\\
&\mr{\phi}(x,0)=0.
\end{aligned}
\right.
\end{equation}

\begin{Lemma}[Time decay estimate for $\mr{\phi}$]\label{L4.5}
	
	\begin{equation}\label{l4.50}
	\begin{aligned}
	&\|\mr{\phi}(t)\|_{L^p({\mathbb{R}})}\leq C_{p,\epsilon}(1+t)^{-\frac{1}{2}(1-\frac{1}{p})+\epsilon},\quad  \forall p\in [1,+\infty).
	\end{aligned}
	\end{equation}	

\begin{proof}Rewrite \eqref{zmi} into the new form,
	
\begin{equation}\label{l4.51}
\left\{
\begin{aligned}
&\partial_{t}\mr{\phi}+\partial_{x_1}\bigg(f_1(\mr{\bar{u}}+\mr{\phi})-f_1(\mr{\bar{u}})\bigg)\\
=&a_{11}\partial_{x_1}^2\mr{\phi}-\mr{J}+\partial_{x_1}\bigg(f_1(\mr{\bar{u}}+\mr{\phi})-f_1(\mr{\bar{u}})-\Do(f_1(\bar{u}+\phi)-f_1(\bar{u}))\bigg),\\
&\mr{\phi}(x,0)=0.
\end{aligned}
\right.
\end{equation}
Remember that $\mr{\phi}$ is only related to $x_1$, not related to $x^{'}.$ The $L^p$ estimation for $\mr{\phi}$ is similar as \cite{Y} except for the remainder terms in the right hand-side of \eqref{l4.51}. Multiplying \eqref{l4.51} by $|\mr{\phi}|^{p-2}\mr{\phi}$,  it yields that

\begin{equation}\label{bs}
\begin{aligned}
&\frac{1}{p}\partial_{t}|\mr{\phi}|^{p}+(p-1)\int_{0}^{\mr{\phi}}(f_1^{'}(\mr{\bar{u}}+s)-f_1^{'}(\mr{\bar{u}}))|s|^{p-2}ds\partial_{x_1}\mr{\bar{u}}\\
=&\partial_{x_{1}}(\cdots)-(p-1)a_{11}|\mr{\phi}|^{p-2}\mr\phi_{x_1}^2-|\mr\phi|^{p-2}\mr\phi \mr{J}+|\mr\phi|^{p-2}\partial_{x_1}\mr\phi  R .
\end{aligned}
\end{equation}
where $(\cdots)$ are

\begin{equation}\label{bs1}
\begin{aligned}
&a_{11}\partial_{x_{1}}\mr\phi|\mr\phi|^{p-2}\mr\phi-|\mr{\phi}|^{p-2}\mr{\phi}(f_1(\mr{\bar{u}}+\mr{\phi})-f_1(\mr{\bar{u}}))\\
&+(p-1)\int_{0}^{\mr{\phi}}(f_1(\mr{\bar{u}}+s)-f_1(\mr{\bar{u}}))|s|^{p-2}ds\\
&+|\mr{\phi}|^{p-2}\mr{\phi}\bigg(f_1(\mr{\bar{u}}+\mr{\phi})-f_1(\mr{\bar{u}})-\Do(f_1(\bar{u}+\phi)-f_1(\bar{u}))\bigg),
\end{aligned}
\end{equation}
and

\begin{equation}\label{R}
\begin{aligned}
R=O(1)\Do\big(\ac{\bar{u}}\mr{\phi}+\ac{\phi}\mr{\phi}+f_1^{''}(\xi)\ac{\phi}\big).
\end{aligned}
\end{equation}
Because $\mr{\bar{u}}=\hat{u}+(1-\eta)\mr{\tilde{u}}_-+\eta\mr{\tilde{u}}_+,$

\begin{equation}\label{l4.53}
\begin{aligned}
&\int_{0}^{\mr{\phi}}(f_1^{'}(\mr{\bar{u}}+s)-f_1^{'}(\mr{\bar{u}}))|s|^{p-2}ds\partial_{x_1}\mr{\bar{u}}\\
=&\int_{0}^{\mr{\phi}}(f_1^{'}(\mr{\bar{u}}+s)-f_1^{'}(\mr{\bar{u}}))|s|^{p-2}ds\partial_{x_1}{\hat{u}}+\int_{0}^{\mr{\phi}}(f_1^{'}(\mr{\bar{u}}+s)-f_1^{'}(\mr{\bar{u}}))|s|^{p-2}ds\partial_{x_1}(\mr{\bar{u}}-\hat{u})\\
=&Q_1+Q_2.
\end{aligned}
\end{equation}
It is easy to verify that $Q_1$ is a positive term which satisfies

\begin{equation}\label{Q1}
\begin{aligned}
\int_{\mathbb{R}}Q_1dx_1
\geq &C_p\bigg( \int_{\bar{u}+\mr\phi> 0, \bar{u}>0}|\mr\phi|^{p} \partial_{x_1} \hat{u} \mathrm{d} x_1+\int_{\bar{u}+\mr\phi> 0, \bar{u}\leq 0}\left(\mr{\bar{u}}+\mr\phi\right)^p\partial_{x_1} \hat{u}dx_1\\
&+\int_{\mr{\bar{u}}+\mr\phi\leq 0, \bar{u}> 0}\mr{\bar{u}}^p\partial_{x_{1}}\hat{u}dx_1\bigg):=A(t).\\
\end{aligned}
\end{equation}

As for $Q_2,$ we get

\begin{equation}\label{Q2}
\begin{aligned}
\int_{\mathbb{R}}|Q_2|dx_1\leq C\varepsilon e^{-\bar{c}t}\|\mr{\phi}\|_{L^p(\mathbb{R})}^p.
\end{aligned}
\end{equation}
Using the interpolation inequality (refer to lemma 5.4 in \cite{Y}) and \eqref{Jp},\eqref{nu}, we get
\begin{equation}\label{l4.54}
\begin{aligned}
&\int_{\Omega}|\mr\phi|^{p-2}\mr\phi \mr{J}dx\leq  \|\mr{\phi}\|_{L^{\infty}(\mathbb{R})}^{p-1}\|\mr{J}\|_{L^1(\mathbb{R})}\\
\leq & C_p\|\partial_{x_1}|\mr{\phi}|^{\frac{p}{2}}\|_{L^2(\mathbb{R})}^{\frac{2(p-1)}{p+1}}\|\mr{\phi}\|_{L^1}^{\frac{p-1}{p+1}}\|\mr{J}\|_{L^1(\mathbb{R})}\\
\leq & \nu\|\partial_{x_1}|\mr{\phi}|^{\frac{p}{2}}\|_{L^2(\mathbb{R})}^2+C_p\|\mr{\phi}\|_{L^1}^{\frac{p-1}{2}}\|\mr{J}\|_{L^1(\mathbb{R})}^{\frac{p+1}{2}}\\
\leq &\nu\|\partial_{x_1}|\mr{\phi}|^{\frac{p}{2}}\|_{L^2(\mathbb{R})}^2+C_{p,\epsilon}(1+t)^{-\frac{p+1}{2}}(1+t)^{p\epsilon}.
\end{aligned}
\end{equation}

For the remainder term, similar as before
\begin{equation}\label{l4.55}
\begin{aligned}
&\int_{\mathbb{R}}|\mr\phi|^{p-2}\partial_{x_1}\mr\phi  R dx_1\\
\leq &C\int_{\mathbb{R}}|\mr\phi|^{p-2}\partial_{x_1}\mr\phi\Do\big(\ac{\bar{u}}\mr{\phi}+\ac{\phi}\mr{\phi}+f_1^{''}(\xi)\ac{\phi}\big)dx_1\\
\leq &\nu \|\partial_{x_1}|\mr{\phi}|^{\frac{p}{2}}\|_{L^2(\mathbb{R})}^2+ Ce^{-\bar{c}t}\|\mr{\phi}\|_{L^p}^p+C\|\mr{\phi}\|_{L^p}^{p-2}\|\ac{\phi}\|_{L^p}^2\\
\leq &\nu\|\partial_{x_1}|\mr{\phi}|^{\frac{p}{2}}\|_{L^2(\mathbb{R})}^2+ Ce^{-\bar{c}t}\|\mr{\phi}\|_{L^p}^p+Ce^{-\bar{c}t}.
\end{aligned}
\end{equation}
Combining all of this and integrating \eqref{bs} over $\mathbb{R}$, it yields that
\begin{equation}\label{l4.56}
\begin{aligned}
\frac{d}{dt}\|\mr{\phi}\|_{L^p(\mathbb{R})}^p+b\|\partial_{x_1}|\mr{\phi}|^{\frac{p}{2}}\|_{L^2(\mathbb{R})}^2+A(t)\leq Ce^{-\bar{c}t}\|\mr{\phi}\|_{L^p}^p+C_{p,\epsilon}(1+t)^{-\frac{p+1}{2}}(1+t)^{p\epsilon}.
\end{aligned}
\end{equation}
Thus, multiplying \eqref{l4.56} by $(1+t)^{\alpha}$ and integrating the resulting equation over $[0,T]$ yields that

\begin{equation}\label{l4.57}
\begin{aligned}
&(1+t)^{\alpha}\|\mr{\phi}\|_{L^p(\mathbb{R})}^p+C\int_{0}^{T}(1+t)^{\alpha}\big(\|\partial_{x_1}|\mr{\phi}|^{\frac{p}{2}}\|_{L^2(\mathbb{R})}^2+A(t)\big)dt\\
\leq & C\int_{0}^{T}(1+t)^{\alpha-1}\|\mr{\phi}\|_{L^p}^pdt+C_{p,\epsilon}\int_{0}^{T}(1+t)^{\alpha-\frac{p+1}{2}}(1+t)^{p\epsilon}dt\\
\leq &C_{p,\epsilon}(1+T)^{\alpha-\frac{p-1}{2}+p\epsilon},
\end{aligned}
\end{equation}
where
\begin{equation}\label{14.58}
\begin{aligned}
&\int_{0}^{T}(1+t)^{\alpha-1}\|\mr{\phi}\|_{L^p(\mathbb{R})}^pdt\leq \int_{0}^{T}(1+t)^{\alpha-1}\|\partial_{x_1}|\mr{\phi}|^{\frac{p}{2}}\|_{L^2(\mathbb{R})}^{\frac{2(p-1)}{p+1}}\|\mr{\phi}\|_{L^1(\mathbb{R})}^{\frac{2p}{p+1}}dt\\
\leq &\nu\int_{0}^{T}(1+t)^{\alpha}\|\partial_{x_1}|\mr{\phi}|^{\frac{p}{2}}\|_{L^2(\mathbb{R})}^2dt+C\int_{0}^{T}(1+t)^{\alpha-\frac{p+1}{2}}\|\mr{\phi}\|_{L^1}^{p}dt.
\end{aligned}
\end{equation}
Finally we can get \eqref{l4.50}.

\end{proof}

\end{Lemma}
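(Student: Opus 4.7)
The plan is to perform a weighted $L^p$ energy estimate directly on the one-dimensional equation \eqref{zm} for $\mr\phi$, exploiting three facts: $\mr\phi$ depends only on $x_1$ so the calculation is essentially one-dimensional; the projected forcing $\mr J$ inherits the sharp 1D decay of Lemma \ref{J}; and every remainder produced by the decomposition $\phi=\mr\phi+\ac\phi$ decays exponentially by Proposition \ref{pet1} and Lemma \ref{lem-periodic-solution}. I would first rewrite \eqref{zm} as
$$\partial_t\mr\phi+\partial_{x_1}\bigl(f_1(\mr{\bar u}+\mr\phi)-f_1(\mr{\bar u})\bigr)=a_{11}\partial_{x_1}^2\mr\phi-\mr J+\partial_{x_1}R,$$
where $R:=f_1(\mr{\bar u}+\mr\phi)-f_1(\mr{\bar u})-\Do\bigl(f_1(\bar u+\phi)-f_1(\bar u)\bigr)$; a Taylor expansion together with Lemma \ref{lemma-decom} shows schematically $R=O(1)\Do(\ac{\bar u}\mr\phi+\ac\phi\mr\phi+\ac\phi)$, so every summand is exponentially small in $t$.

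Next I would multiply the reformulated equation by $|\mr\phi|^{p-2}\mr\phi$ and integrate over $\mathbb R$. Integration by parts turns the diffusion into a dissipation $C_p\|\partial_{x_1}|\mr\phi|^{p/2}\|_{L^2}^2$, and the nonlinear convection produces, via the standard identity, a non-negative contribution
$$A(t):=(p-1)\int_\mathbb{R}\int_0^{\mr\phi}\bigl(f_1'(\mr{\bar u}+s)-f_1'(\mr{\bar u})\bigr)|s|^{p-2}ds\,\partial_{x_1}\hat u\,dx_1\ge 0,$$
thanks to convexity \eqref{f''} and $\partial_{x_1}\hat u>0$; the error from $\partial_{x_1}(\mr{\bar u}-\hat u)$ is of order $\varepsilon e^{-\bar ct}\|\mr\phi\|_{L^p}^p$ by Lemma \ref{lem-periodic-solution}. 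The forcing $-\int|\mr\phi|^{p-2}\mr\phi\,\mr J\,dx_1$ I would treat by H\"older combined with the 1D Gagliardo-Nirenberg inequality $\|\mr\phi\|_{L^\infty(\mathbb R)}\le C\|\partial_{x_1}|\mr\phi|^{p/2}\|_{L^2}^{2/(p+1)}\|\mr\phi\|_{L^1}^{(p-1)/(p+1)}$, using $\|\mr J\|_{L^1}\lesssim(1+t)^{-1+\epsilon}$ from Lemma \ref{J} and the a priori bound $\|\mr\phi\|_{L^1}\lesssim(1+t)^\epsilon$ from Lemma \ref{L1}, which gives $\nu\|\partial_{x_1}|\mr\phi|^{p/2}\|_{L^2}^2+C_{p,\epsilon}(1+t)^{-(p+1)/2+p\epsilon}$. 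The remainder $\int|\mr\phi|^{p-2}\partial_{x_1}\mr\phi\cdot R\,dx_1$ I would absorb by Young's inequality, exploiting the exponential decay of $\ac{\bar u}$ from Lemma \ref{lem-periodic-solution} and of $\ac\phi$ in $L^p$ from Proposition \ref{pet1}. Collecting everything yields the basic differential inequality
$$\frac{d}{dt}\|\mr\phi\|_{L^p}^p+c\|\partial_{x_1}|\mr\phi|^{p/2}\|_{L^2}^2+A(t)\le Ce^{-\bar ct}\|\mr\phi\|_{L^p}^p+C_{p,\epsilon}(1+t)^{-(p+1)/2+p\epsilon}.$$

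To extract the decay rate I would then multiply by the weight $(1+t)^\alpha$ with $\alpha=\tfrac{p-1}{2}$ and integrate on $[0,T]$. The extra term $\int_0^T(1+t)^{\alpha-1}\|\mr\phi\|_{L^p}^p\,dt$ produced by the weight is controlled, via the interpolation $\|\mr\phi\|_{L^p}^p\le C\|\partial_{x_1}|\mr\phi|^{p/2}\|_{L^2}^{2(p-1)/(p+1)}\|\mr\phi\|_{L^1}^{2p/(p+1)}$ and Young's inequality, by a small multiple of the time-weighted dissipation (absorbable on the left) plus a term of order $(1+T)^{\alpha-(p-1)/2+p\epsilon}$; dividing by $(1+T)^\alpha$ then yields the claim. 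The main obstacle I anticipate is the book-keeping of the $\epsilon$-losses: because the perturbation is oscillating at infinity, $\|\mr\phi\|_{L^1}$ is only $(1+t)^\epsilon$ rather than uniformly bounded, so the interpolation has to be arranged so that only one power of $(1+t)^\epsilon$ survives in the final exponent — otherwise the rate would deteriorate strictly beyond the one in Theorem 1.4 of \cite{Y}, where integrability of the 1D perturbation is used in an essential way.
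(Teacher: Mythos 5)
Your proposal follows essentially the same route as the paper: the same reformulation of \eqref{zm} with the remainder $R$, the same $|\mr\phi|^{p-2}\mr\phi$ multiplier, the same splitting of the convection term into a sign-definite piece against $\partial_{x_1}\hat u$ and an exponentially small error against $\partial_{x_1}(\mr{\bar u}-\hat u)$, the same 1D interpolation used to handle $\mr J$ and to absorb the weighted $\|\mr\phi\|_{L^p}^p$ term into the dissipation, and the same $(1+t)^\alpha$-weighted integration. The only small slip is in the Gagliardo--Nirenberg estimate: the correct form is $\|\mr\phi\|_{L^\infty(\mathbb R)}\le C\|\partial_{x_1}|\mr\phi|^{p/2}\|_{L^2}^{2/(p+1)}\|\mr\phi\|_{L^1}^{1/(p+1)}$ (your $\|\mr\phi\|_{L^1}$ exponent $(p-1)/(p+1)$ is off), but this does not change the resulting rate $(1+t)^{-(p+1)/2+p\epsilon}$ you correctly reach after applying it to $\|\mr\phi\|_{L^\infty}^{p-1}\|\mr J\|_{L^1}$ and Young's inequality.
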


\begin{Lemma}[Time decay estimate for $\partial_{x_1}\mr{\phi}, 2\leq p<+\infty$]\label{L4.6}
	
	\begin{equation}\label{l4.6}
	\begin{aligned}
	\|\partial_{x_1}\mr{\phi}\|_{L^p(\mathbb{R})}\leq C_{p,\epsilon}(1+t)^{-\frac{1}{2}(1-\frac{1}{p})+\epsilon},\ \forall p\in[2,+\infty).
	\end{aligned}
	\end{equation}

\end{Lemma}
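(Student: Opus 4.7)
The plan is to mirror the strategy of Lemma \ref{L4.5} one derivative higher: first derive a parabolic equation for $\psi:=\partial_{x_1}\mathring{\phi}$ by differentiating \eqref{l4.51} in $x_1$, then run a weighted $L^p$ energy estimate, using the bounds already obtained in Proposition \ref{pet1} and in Lemma \ref{L4.5} to close the loop.

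First I would differentiate \eqref{l4.51} in $x_1$ to obtain
\begin{equation*}
\partial_t\psi+\partial_{x_1}\!\bigl((f_1'(\mr{\bar u}+\mr\phi)-f_1'(\mr{\bar u}))\,\partial_{x_1}\mr{\bar u}+f_1'(\mr{\bar u}+\mr\phi)\psi\bigr)=a_{11}\partial_{x_1}^2\psi-\partial_{x_1}\mr{J}+\partial_{x_1}^2 R,
\end{equation*}
where $R$ is the discrepancy between the averaged nonlinearity and the nonlinearity of the average, already controlled by the representation \eqref{R}. Next I would multiply by $|\psi|^{p-2}\psi$ and integrate over $\mathbb{R}$. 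The viscous term yields the dissipation $(p-1)a_{11}\int|\psi|^{p-2}\psi_{x_1}^2\,dx_1\sim b\|\partial_{x_1}|\psi|^{p/2}\|_{L^2}^2$. The genuinely nonlinear convective term $f_1'(\mr{\bar u}+\mr\phi)\psi\cdot\partial_{x_1}(|\psi|^{p-2}\psi)$ produces a divergence plus a term $\sim\int f_1''(\mr{\bar u}+\mr\phi)(\partial_{x_1}\mr{\bar u}+\psi)|\psi|^p\,dx_1$; the rarefaction piece $\partial_{x_1}\hat u>0$ part gives a favourable sign on the relevant set (as in $Q_1$ of \eqref{Q1}), while the $\partial_{x_1}(\mr{\bar u}-\hat u)$ piece is $O(\varepsilon e^{-\bar ct})\|\psi\|_{L^p}^p$ by \eqref{pe-de}.

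The forcing terms are estimated as follows. The source $\partial_{x_1}\mathring{J}$ is split as in Lemma \ref{J}: the periodic part contributes $C\varepsilon e^{-\bar c t}$, and $\partial_{x_1}N(u^c,u^r)$ obeys an $L^1$-bound of order $(1+t)^{-1+\epsilon}$ by the argument of Proposition~3.1 of \cite{Y}. One interpolates
\begin{equation*}
\int|\psi|^{p-2}\psi\,\partial_{x_1}\mathring{J}\,dx_1\le\|\psi\|_{L^\infty}^{p-1}\|\partial_{x_1}\mathring{J}\|_{L^1}\le \nu\|\partial_{x_1}|\psi|^{p/2}\|_{L^2}^2+C_{p,\epsilon}(1+t)^{-\frac{p+1}{2}+p\epsilon},
\end{equation*}
using the one-dimensional Gagliardo--Nirenberg inequality and the $L^p$ decay $\|\psi\|_{L^p}\lesssim(1+t)^\epsilon$ that follows from Lemma \ref{L4.5} combined with $\int_{\mathbb R}\mr{\phi}\,dx_1=0$. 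The term generated by $\partial_{x_1}^2 R$ is handled by an integration by parts that lands one derivative on $|\psi|^{p-2}\psi$ and uses the exponential bounds on $\acute{\bar u}$ from \eqref{pe-de} and on $\acute{\phi}$ from Proposition \ref{pet1} to absorb everything into $\nu\|\partial_{x_1}|\psi|^{p/2}\|_{L^2}^2$ plus $Ce^{-\bar ct}$.

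Putting these estimates together yields the differential inequality
\begin{equation*}
\frac{d}{dt}\|\psi\|_{L^p(\mathbb R)}^p+b\|\partial_{x_1}|\psi|^{p/2}\|_{L^2(\mathbb R)}^2\le C\varepsilon e^{-\bar ct}\|\psi\|_{L^p}^p+C_{p,\epsilon}(1+t)^{-\frac{p+1}{2}+p\epsilon}.
\end{equation*}
Multiplying by $(1+t)^{\alpha}$ with $\alpha=\tfrac{p-1}{2}-p\epsilon+\delta$ (for arbitrarily small $\delta>0$) and integrating, the low-order term $(1+t)^{\alpha-1}\|\psi\|_{L^p}^p$ is absorbed via the one-dimensional interpolation
\begin{equation*}
\|\psi\|_{L^p(\mathbb R)}^p\le C\|\partial_{x_1}|\psi|^{p/2}\|_{L^2(\mathbb R)}^{\frac{2(p-1)}{p+1}}\|\psi\|_{L^1(\mathbb R)}^{\frac{2p}{p+1}},
\end{equation*}
exactly as in \eqref{14.58}, together with the $L^1$ bound on $\psi$ obtained from $\|\mr\phi\|_{L^1}\le C(1+t)^\epsilon$ by differentiating and interpolating against the $H^1$ bound of Lemma~\ref{H1}. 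This yields $(1+t)^\alpha\|\psi\|_{L^p}^p\le C_{p,\epsilon}(1+T)^{\alpha-\frac{p-1}{2}+p\epsilon}$, which is \eqref{l4.6}. The main obstacle is precisely this last closure: the term $(1+t)^{\alpha-1}\|\psi\|_{L^p}^p$ is of the same order as the left-hand side, and controlling it requires the interpolation above together with the sharpened $L^1$-bound on $\psi$; without that step one loses an extra factor and only obtains a strictly weaker decay rate.
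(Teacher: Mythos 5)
Your overall framework (differentiate \eqref{l4.51}, test against $|\psi|^{p-2}\psi$, weight by $(1+t)^\alpha$) is the right skeleton, but the claimed differential inequality and the closure argument both have genuine gaps that the paper's proof is built specifically to avoid.

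First, your asserted inequality
\begin{equation*}
\frac{d}{dt}\|\psi\|_{L^p}^p+b\|\partial_{x_1}|\psi|^{p/2}\|_{L^2}^2\le C\varepsilon e^{-\bar ct}\|\psi\|_{L^p}^p+C_{p,\epsilon}(1+t)^{-\frac{p+1}{2}+p\epsilon}
\end{equation*}
drops the most dangerous term. When you expand the nonlinear convective piece, the factor $f_1'(\mr{\bar u}+\mr\phi)-f_1'(\mr{\bar u})\sim f_1''(\cdot)\mr{\phi}$ multiplied by $\psi$ and $\partial_{x_1}(|\psi|^{p-2}\psi)$ produces, after H\"older, a contribution of the form $C\|\mr{\phi}\|_{L^\infty}^2\|\psi\|_{L^p}^p$ (this is precisely the second part of $H_2$ in the paper's \eqref{H2}). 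Your analysis only addresses the pieces with $\partial_{x_1}\hat u$ and $\partial_{x_1}(\mr{\bar u}-\hat u)$; the cross term with $\psi$ remains. The crude bound $\|\mr{\phi}\|_{L^\infty}\le\nu$ from \eqref{nu} gives only $\nu^2\|\psi\|_{L^p}^p$, which is a non-decaying Gronwall coefficient on the non-compact domain $\mathbb{R}$ (there is no Poincar\'e inequality to absorb $\|\psi\|_{L^p}^p$ into the dissipation). This term therefore cannot be closed without knowing an actual \emph{decay rate} for $\|\mr{\phi}\|_{L^\infty}$. The paper handles this by a bootstrap: first treat $p=2$, where the time-integrated dissipation already controlled in \eqref{l4.57} of Lemma~\ref{L4.5} (note that $\|\partial_{x_1}|\mr{\phi}|\|_{L^2}=\|\mr{\phi}_1\|_{L^2}$ when $p=2$) yields $\|\mr{\phi}_1\|_{L^2}\lesssim(1+t)^{-1/4+\epsilon}$; then interpolate with Lemma~\ref{L4.5} to obtain $\|\mr{\phi}\|_{L^\infty}\lesssim(1+t)^{-1/2+\epsilon}$ as in \eqref{mr}; and only then close the $p>2$ estimate with the coefficient $\|\mr{\phi}\|_{L^\infty}^2\lesssim(1+t)^{-1+\epsilon}$ feeding into a Gronwall argument that converges. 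Your proposal is a single-pass scheme and thus skips the step that makes the whole argument circular-but-closable. There is also a second term of $H_2$ (the $\partial_1\mr{\bar u}$ piece) that is not signed; it produces $\|(Q_1,Q_2)\|_{L^1}$ on the right of \eqref{l4.65}, also controlled only because the previous lemma gives an integrated bound on that quantity.

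Second, the interpolation you invoke to absorb the low-order term uses $\|\psi\|_{L^1}$, and you claim it follows from $\|\mr{\phi}\|_{L^1}\lesssim(1+t)^\epsilon$ ``by differentiating and interpolating against the $H^1$ bound.'' Differentiation does not preserve an $L^1$ bound, and there is no interpolation path to $L^1(\mathbb{R})$ from $L^2$ on an unbounded domain, so $\|\psi\|_{L^1}$ is simply not available. The paper instead interpolates $\|\mr{\phi}_1\|_{L^p}^p$ against the \emph{undifferentiated} norm $\|\mr{\phi}\|_{L^p}$ and the dissipation $\|\partial_{x_1}|\mr{\phi}_1|^{p/2}\|_{L^2}$; the decay of $\|\mr{\phi}\|_{L^p}$ from \eqref{l4.50} is exactly what is needed to make the low-order term harmless. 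You should replace your interpolation step with this one.
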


\begin{proof}Step 1: Remember  $\mr{\phi}_1:=\partial_{x_1}\mr{\phi},$ taking  the derivative on \eqref{zmi} with respect to $x_1$ yields that

\begin{equation}\label{l4.61}
\begin{aligned}
&\partial_{t}\mr{\phi}_1+\partial_{x_{1}}\bigg(f_1'(\mr{\bar{u}})\mr{\phi}_1\bigg)+\partial_{x_1}\bigg((f_1^{'}(\mr{\bar{u}}+\mr{\phi})-f_1^{'}(\mr{\bar{u}}))(\partial_1\mr{\bar{u}}+\mr{\phi}_1)\bigg)\\
=&a_{11}\partial_{x_1}^2\mr{\phi}_1-\partial_1\mr{J}+\partial_{x_1}^2\bigg(f_1(\mr{\bar{u}}+\mr{\phi})-f_1(\mr{\bar{u}})-\Do(f_1(\bar{u}+\phi)-f_1(\bar{u}))\bigg).
\end{aligned}
\end{equation}
Multiplying \eqref{l4.61}  by $|\mr{\phi}_1|^{p-2}\mr{\phi}_1$,  it yields that

\begin{equation}\label{l4.62}
\begin{aligned}
&\frac{1}{p}\pt|\mr{\phi}_1|^p+(p-1) a_{11}|\mr{\phi}_1|^{p-2}\mr{\phi}_{1x_1}^2+\partial_{x_{1}}(\cdots)\\
=&H-\partial_{x_{1}}\mr{J}(|\mr{\phi}_1|^{p-2}\mr{\phi}_{1}),
\end{aligned}
\end{equation}
where the term $H$ can be divided into following situations,

\begin{equation}\label{H}
\begin{aligned}
H_1=&\bigg(f_i'(\mr{\bar{u}})\mr{\phi}_1\bigg)\partial_{x_1}(|\mr{\phi}_1|^{p-2}\mr{\phi}_1),\\
H_2=&(f_1^{'}(\mr{\bar{u}}+\mr{\phi})-f_1^{'}(\mr{\bar{u}}))(\partial_1\mr{\bar{u}}+\mr{\phi}_1)\partial_{x_1}(|\mr{\phi}_1|^{p-2}\mr{\phi}_1),\\
H_3=&\bigg\{\ac{\phi}_1+(\partial_1\ac{\bar{u}}+\ac{\bar{u}}+\ac{\phi})(\mr{\phi}+\ac{\phi})+\partial_1\mr{\bar{u}}\ac{\phi}+\mr{\phi}_1(\ac{\bar{u}}+\ac{\phi})\bigg\}\partial_{x_1}(|\mr{\phi}_1|^{p-2}\mr{\phi}_1).
\end{aligned}
\end{equation}
For $H_1$, the estimate is same as $G_1$\eqref{l4.16}. That is

\begin{equation}\label{H-1}
\begin{aligned}
\int_{\mathbb{R}}H_1dx_1
=&\int_{\mathbb{R}}\partial_{x_1}\bigg(\frac{p-1}{p}f_1'(\mr{\bar{u}})|\mr{\phi}_1|^{p}\bigg)-\frac{p-1}{p}f^{''}_1(\mr{\bar{u}})|\mr{\phi}_1|^{p}\partial_{x_{1}}\hat{u}dx_1\\
&-\frac{p-1}{p}f^{''}_1(\mr{\bar{u}})|\mr{\phi}_1|^{p}\partial_{x_{1}}\bigg((1-\eta)\mr{\tilde{u}}_-+\eta\mr{\tilde{u}}_+\bigg)dx_1\\
\leq &Ce^{-ct}\|\mr{\phi}_1\|_{L^p}^p.
\end{aligned}
\end{equation}
For $H_2$,

\begin{equation}\label{H2}
\begin{aligned}
\int_{\mathbb{R}}H_2dx_1\leq & \|\mr{\phi}_1\|_{L^p}^{\frac{p-2}{2}}\|\partial_{x_1}|\mr{\phi}_1|^{\frac{p}{2}}\|_{L^2}(\|(Q_1,Q_2)\|_{L^1}^{\frac{1}{p}})+\|\partial_{x_1}|\mr{\phi}_1|^{\frac{p}{2}}\|_{L^2}\|\mr{\phi}\|_{L^{\infty}}\|\mr{\phi}_1\|_{L^p}^{\frac{p}{2}}\\
\leq &\nu\|\partial_{x_1}|\mr{\phi}_1|^{\frac{p}{2}}\|_{L^2}^2+\|(Q_1,Q_2)\|_{L^1}+C\|\mr{\phi}\|_{L^{\infty}}^2\|\mr{\phi}_1\|_{L^p}^p.
\end{aligned}
\end{equation}
For $H_3$, similar as before, we have
\begin{equation}\label{H3}
\begin{aligned}
\int_{\mathbb{R}}H_3dx_1\leq &\nu\|\partial_{x_1}|\mr{\phi}_1|^{\frac{p}{2}}\|_{L^2}^2+Ce^{-ct\cdot p}+Ce^{-ct}\|\mr{\phi}_1\|_{L^p}^p+\|\ac{\phi}\|_{L^{\infty}}\|\mr{\phi}_1\|_{L^p}^p,
\end{aligned}
\end{equation}
where we use previous lemmas \ref{L4.2}-\ref{L4.5}. Combining all of this and integrating \eqref{l4.62} over $\mathbb{R},$  we have
\begin{equation}\label{l4.65}
\begin{aligned}
&\frac{d}{dt}\|\mr{\phi}_1\|_{L^p}^p+b\|\partial_{x_1}|\mr{\phi}_1|^{\frac{p}{2}}\|_{L^2}^2\leq C e^{-ct\cdot p}+C(e^{-ct}+\|\mr{\phi}\|_{L^{\infty}}^2)\|\mr{\phi}_1\|_{L^p}^p\\
&+(\|\partial_1\mr{J}\|_{L^1}\|\mr{\phi}_1\|_{L^{\infty}}^{p-1}+\|(Q_1,Q_2)\|_{L^1}).
\end{aligned}
\end{equation}
Step 2: For $p=2,$ from \eqref{Jp}, \eqref{l4.57},
\begin{equation}\label{l4.66}
\begin{aligned}
\int_{0}^{t}\|\mr{\phi}_1\|_{L^2}^2+\|\partial_{x_{1}}\mr{J}_1\|_{L^2}^2+\|(Q_1,Q_2)\|_{L^1}d\tau\leq C(1+t)^{-\frac{1}{2}+\epsilon}.
\end{aligned}
\end{equation}
Similar as before, we could get
\begin{equation}\label{l4.67}
\begin{aligned}
&\|\mr{\phi}_1\|_{L^2(\mathbb{R})}\leq C_{p,\epsilon}(1+t)^{-\frac{1}{4}+\epsilon}.
\end{aligned}
\end{equation}	
Then the interpolation inequality for any $\theta>0$ and \eqref{l4.50}, \eqref{l4.67} gives that
\begin{equation}\label{mr}
\begin{aligned}
\|\mr{\phi}\|_{L^{\infty}(\mathbb{R})}\leq C_{p,\theta}\|\mr{\phi}\|_{L^p}^{1-\theta}\|\partial_{x_1}\mr{\phi}\|_{L^2}^{\theta}\leq C_{p,\theta}(1+t)^{-\frac{1}{2}+\epsilon}.
\end{aligned}
\end{equation}
When $p>2$,  by using \eqref{Jp}, \eqref{l4.50} and the interpolation inequality(refer to lemma 4.2 in \cite{Y}),
\begin{equation}
\begin{aligned}
&\|\mr{\phi}\|_{L^{\infty}}^2\|\mr{\phi}_1\|_{L^p}^p\leq C(1+t)^{-1+\epsilon}\|\mr{\phi}_1\|_{L^p}^p,\\
&(1+t)^{-1+\epsilon}\|\mr{\phi}_1\|_{L^p}^{p}\leq C(1+t)^{-1+\epsilon}\|\partial_{x_1}|\mr{\phi}_1|^{\frac{p}{2}}\|_{L^2}^{\frac{2p}{p+2}}\|\mr{\phi}\|_{L^p}^{\frac{2p}{p+2}}\\
\leq & \nu\|\partial_{x_1}|\mr{\phi}_1|^{\frac{p}{2}}\|_{L^2}^2+C(1+t)^{-\frac{p+2}{2}(1-\epsilon)}\|\mr{\phi}\|_{L^p}^{p}\\
\leq &\nu\|\partial_{x_1}|\mr{\phi}_1|^{\frac{p}{2}}\|_{L^2}^2+C(1+t)^{-\frac{p+2}{2}(1-\epsilon)}(1+t)^{-\frac{p-1}{2}+p\epsilon}\\
\leq & \nu\|\partial_{x_1}|\mr{\phi}_1|^{\frac{p}{2}}\|_{L^2}^2+C(1+t)^{-(p+\frac{1}{2}-p\epsilon)}
\end{aligned}
\end{equation}

\begin{equation}
\begin{aligned}
\|\partial_1\mr{J}\|_{L^1}\|\mr{\phi}_1\|_{L^{\infty}}^{p-1}\leq &C(1+t)^{-(1-\epsilon)}\|\partial_{x_1}|\mr{\phi}_1|^{\frac{p}{2}}\|_{L^2}^{\frac{2(p-1)}{p+2}}\|\mr{\phi}_1\|_{L^2}^{\frac{2(p-1)}{p+2}}\\
\leq &\nu\|\partial_{x_1}|\mr{\phi}_1|^{\frac{p}{2}}\|_{L^2}^2+C(1+t)^{-\frac{p+1}{2}+p\epsilon}
\end{aligned}
\end{equation}
We can finally get \eqref{l4.6}.
\end{proof}
Now  \eqref{ac}  and \eqref{mr} tell us
\begin{equation}
\begin{aligned}
\|\phi\|_{L^{\infty}}\leq \|\ac{\phi}\|_{L^{\infty}}+\|\mr{\phi}\|_{L^{\infty}}\leq C(1+t)^{-\frac{1}{2}+\epsilon}.
\end{aligned}
\end{equation}
We prove theorem \ref{mt}.

\end{document}